\DeclareMathOperator*{\esssup}{ess\,sup}
\definecolor{black}{rgb}{0.0, 0.0, 0.0}
\definecolor{red}{rgb}{1.0, 0.5, 0.5}
\newcommand{\margnote}[1]{
\ifthenelse{\boolean{shownotes}}%
{\marginpar{\raggedright\tiny\texttt{#1}}}%
{}%
}
\newcommand{\hole}[1]{
\ifthenelse{\boolean{shownotes}}%
{\begin{center} \fbox{ \rule {.25cm}{0cm} \rule[-.1cm]{0cm}{.4cm}
\parbox{.85\textwidth}{\begin{center} \texttt{#1}\end{center}} \rule
{.25cm}{0cm}}\end{center}} {} }
\title[Global existence of weak solutions to a BGK model]
{Global existence of weak solutions to a BGK model relaxing to the barotropic Euler equations}
\author[Choi]{Young-Pil Choi}
\address[Young-Pil Choi]{\newline Department of Mathematics\newline
Yonsei University, 50 Yonsei-Ro, Seodaemun-Gu, Seoul 03722, Republic of Korea}
\email{ypchoi@yonsei.ac.kr}
\author[Hwang]{Byung-Hoon Hwang}
\address[Byung-Hoon Hwang]{\newline Department of Mathematics Education\newline
Sangmyung University, 20 Hongjimun 2-gil, Jongno-Gu, Seoul 03016, Republic of Korea}
\email{bhhwang@smu.ac.kr}
\numberwithin{equation}{section}
\newtheorem{theorem}{Theorem}[section]
\newtheorem{lemma}{Lemma}[section]
\newtheorem{proposition}{Proposition}[section]
\newtheorem{remark}{Remark}[section]
\newtheorem{definition}{Definition}[section]
\newcommand{\sfD}{\mathsf{D}}
\newcommand{\K}{\kappa}
\newcommand{\R}{\mathbb R}
\newcommand{\N}{\mathbb N}
\newcommand{\bbn}{\mathbb N}
\newcommand{\om}{\Omega}
\newcommand{\ls}{\lesssim}
\newcommand{\T}{\mathbb T}
\newcommand{\mc}{\mathcal C}
\newcommand{\bq}{\begin{equation}}
\newcommand{\eq}{\end{equation}}
\newcommand{\e}{\varepsilon}
\newcommand{\lt}{\left}
\newcommand{\rt}{\right}
\newcommand{\lal}{\langle}
\newcommand{\ral}{\rangle}
\newcommand{\pa}{\partial}
\newcommand{\into}{\int_\om}
\newcommand{\intr}{\int_{\R^d}}
\newcommand{\intrr}{\iint_{\R^d \times \R^d}}
\newcommand{\intor}{\iint_{\om \times \R^d}}
\newcommand{\inttr}{\iint_{\T^d \times \R^d}}
\def\moverlay{\mathpalette\mov@rlay}
\def\mov@rlay#1#2{\leavevmode\vtop{%
   \baselineskip\z@skip \lineskiplimit-\maxdimen
   \ialign{\hfil$\m@th#1##$\hfil\cr#2\crcr}}}
\newcommand{\charfusion}[3][\mathord]{
    #1{\ifx#1\mathop\vphantom{#2}\fi
        \mathpalette\mov@rlay{#2\cr#3}
      }
    \ifx#1\mathop\expandafter\displaylimits\fi}
\begin{document}
\allowdisplaybreaks

\date{\today}

\subjclass[]{}
\keywords{Global weak solutions, BGK-type model, barotropic Euler equations, hydrodynamic limit, velocity averaging.}

\begin{abstract} 
	We establish the global-in-time existence of weak solutions to a variant of the BGK model proposed by Bouchut [J. Stat. Phys., 95, (1999), 113--170] which leads to the barotropic Euler equations in the hydrodynamic limit. Our existence theory makes the quantified estimates of hydrodynamic limit from the BGK-type equations to the multi-dimensional barotropic Euler system discussed by Berthelin and Vasseur [SIAM J. Math. Anal., 36, (2005), 1807--1835] completely rigorous.

\end{abstract}

\maketitle \centerline{\date}

\tableofcontents

%
%
%
%
\section{Introduction}
 
The BGK model \cite{BGK54,W54} is a relaxation time approximation of the celebrated Boltzmann equation, which describes the time evolution of velocity distribution functions of rarefied gases based on the relaxation process towards the Maxwellian distribution. One of the important topics in the model is to study the connection between mesoscopic kinetic equations and fluid dynamical macroscopic equations so called  {\it hydrodynamic limits}. By taking into account different asymptotic regimes, it is well-known that the compressible Euler equations and the Navier--Stokes equations can be derived from the BGK model at the formal level by the Chapman--Enskog or Hilbert expansion \cite{BGL91, C88, CIP94, GS03}. In the case of the rigorous derivations, several papers have been reported so far, dealing with the Navier--Stokes--Fourier equations \cite{S03}, the linear incompressible Navier--Stokes equations \cite{B10}, and the nonlinearized compressible Euler equations and the acoustic equations \cite{B04}. For the Boltzmann equation, derivations of incompressible Navier--Stokes equations and Euler equations are established \cite{BGL93, C80, GS04,MS03, Y05}. There are also many different kinetic models, which might not be associated to microscopic descriptions, for conservation laws and balance laws \cite{B99, GM83, LPT94, PT91}. We refer to \cite{Per02, S09, V08} and references therein for the general survey of the hydrodynamic limits of kinetic theory.

Among the various kinetic models for systems of conservation laws, in the present work, we are concerned with a BGK-type kinetic equation, introduced by Bouchut \cite{B99}, relaxing to the barotropic Euler equations. To be more specific, the main purpose of the current work is to establish the global-in-time existence of weak solutions to the following kinetic equation:
\begin{equation}\label{BGK}
\pa_t f + v\cdot\nabla_x f  = M[f] - f, \qquad (x,v,t) \in \Omega \times \R^d \times \R_+
\end{equation}
subject to the initial data:
\[
f(x,v,0)= f_0(x,v), \qquad (x,v) \in \Omega \times \R^d.
\]
Here $f = f(x,v,t)$ stands for the one-particle distribution function at the phase space point $(x,v) \in \Omega \times \R^d$ and time $t \in \R_+$, where $\Omega$ is a spatial domain, either $\T^d$ or $\R^d$. The equilibrium function $M[f]$ for barotropic gas dynamics is given as
\begin{equation}\label{Maxwellian}
M [f]= \begin{cases}
\displaystyle \mathbf{1}_{|u_f-v|^d\le c_d\rho_f}\qquad &\text{for}\quad \displaystyle  \gamma=\frac{d+2}{d},\\[4mm]
\displaystyle c\left(\frac{2\gamma}{\gamma-1}\rho_f^{\gamma-1}-|v-u_f|^2\right)^{n/2}_+ \qquad &\text{for} \quad \displaystyle \gamma \in \lt(1,\frac{d+2}d\rt).
\end{cases}
\end{equation}
Here $\rho_f$ and $u_f$ denote the macroscopic density and bulk velocity, respectively:
\begin{align*}
\rho_f(x,t)&=\int_{\mathbb{R}^d} f(x,v,t)\,dv,\qquad\rho_f(x,t)u_f(x,t)=\int_{\mathbb{R}^d} vf(x,v,t)\,dv,
\end{align*}
and the constants $c_d,n,$ and $c$ are given as
\begin{align*}
c_d&=\frac{d}{|\mathbb{S}_{d-1}|},
\quad n=\frac{2}{\gamma-1}-d,\quad \mbox{and} \quad c=\left(\frac{2\gamma}{\gamma-1}\right)^{-\frac{1}{\gamma-1}}\frac{\Gamma\left(\frac{\gamma}{\gamma-1}\right)}{\pi^{\frac d2}\Gamma(\frac n2+1)},
\end{align*}
respectively, where $|\mathbb{S}_{d}|$ is the surface area of $d$-sphere embedded in dimension $d+1$, i.e., $|\mathbb{S}_d|=\frac{2\pi^{\frac{d+1}{2}}}{\Gamma\left(\frac{d+1}{2}\right)}$, and $\Gamma$ is the Gamma function. 

The BGK-type kinetic model \eqref{BGK} is constructed in \cite{B99} to study different types of hydrodynamic systems, especially the barotropic gas dynamics. Note that the equilibrium function $M$ satisfies that for $\gamma \in (1, \frac{d+2}d]$,
\bq\label{moment_comp}
	\int_{\mathbb{R}^d}(1,v,|v|^2)  M[f]\,dv =
	\left(\rho_f,\rho_fu_f, 
	\rho_f |u_f|^2 +dC_d \rho_f^\gamma \right)
\eq
for some $C_d > 0$ (see Lemma \ref{moments} for detailed computations). In particular, the first two moments estimates imply
\bq\label{cons}
\intr (M[f]-f)\,dv=0\quad \mbox{and} \quad \intr v(M[f]-f)\,dv=0.
\eq
Thus it formally leads to the conservation laws of mass and momentum for \eqref{BGK}. 

The kinetic entropy associated to the equation \eqref{BGK} is given as
\bq\label{k_entropy}
H(f,v)= \begin{cases}
\displaystyle \frac{|v|^2}{2}f\qquad &\text{for}\quad \displaystyle \gamma=\frac{d+2}{d},\\[4mm]
\displaystyle \frac{|v|^2}{2}f+\frac{1}{2c^{2/n}}\frac{f^{1+2/n}}{1+2/n} \qquad &\text{for} \quad \displaystyle \gamma \in \lt(1,\frac{d+2}d\rt).
\end{cases}
\eq
It is observed in \cite{B99} that for any $f$ satisfying 
\[
\intr (f + H(f,v))\,dv < \infty, 
\]
the following  minimization principle holds: 
\begin{equation}\label{minimization}
\intr H(M[f],v)\,dv \le \intr H(f,v)\,dv.
\end{equation}

In the mono-dimensional case, i.e. $d=1$, the global-in-time existence of weak solutions and its hydrodynamic limit of the kinetic equation \eqref{BGK} are studied by Berthelin and Bouchut \cite{BB00, BB02, BB02_3}. When it comes to the multi-dimensional case, the hydrodynamic limit from the BGK-type kinetic equations to the isentropic gas dynamics is investigated by Berthelin and Vasseur \cite{BV05} based on the relative entropy method, also often called as modulated energy method. This method requires the strong regularity of solutions to the limiting system, the barotropic Euler system, thus the hydrodynamic limit is valid only before shocks appear. In \cite{BV05}, it is assumed that there exist $L^1$-solutions for $f$ satisfying the kinetic entropy inequality. However, to our best knowledge, the global-in-time existence of such solutions to \eqref{BGK} has not been established yet except the mono-dimensional case. The main purpose of this study is, therefore, to develop an existence theory for the BGK-type kinetic equation \eqref{BGK}. For the purpose of studying the hydrodynamic limit, we also need to have the constructed solutions satisfying the kinetic entropy inequality. 

\subsection{Formal derivation of the barotropic Euler system} Let us briefly and formally explain on the connection between the kinetic equation \eqref{BGK} and the barotropic Euler system. Considering the typical Euler scaling, $(t,x) \mapsto (\frac t\e, \frac x\e)$ with the relaxation parameter $\e>0$, we obtain from \eqref{BGK} that  
\bq\label{formal}
\pa_t f_\e + v\cdot\nabla_x f_\e  = \frac1\e (M[f_\e] - f_\e).
\eq
By taking into account the local moments and using \eqref{cons}, we can derive a system of local balanced laws:
\begin{align*}
&\pa_t \rho_{f_\e} + \nabla_x \cdot (\rho_{f_\e}  u_{f_\e}) = 0,\cr
&\pa_t (\rho_{f_\e}  u_{f_\e}) + \nabla_x \cdot (\rho_{f_\e}  u_{f_\e} \otimes  u_{f_\e}) + \nabla_x \cdot \lt(\intr ( u_{f_\e} - v)\otimes ( u_{f_\e} - v) f_\e\,dv\rt) =0.
\end{align*}
Note that the above system is not closed. On the other hand, if we have $\rho_{f_\e} \to \rho$ and $u_{f_\e} \to u$ as $\e \to 0$, then formally it follows from \eqref{formal} that 
\[
f^\e \to M_{\rho, u} \quad \mbox{as} \quad \e \to 0, 
\]
where
\[
 M_{\rho, u} := \begin{cases}
\displaystyle \mathbf{1}_{|u-v|^d\le c_d\rho}\qquad &\text{for}\quad \displaystyle  \gamma=\frac{d+2}{d},\\[4mm]
\displaystyle c\left(\frac{2\gamma}{\gamma-1}\rho^{\gamma-1}-|v-u|^2\right)^{n/2}_+\qquad &\text{for} \quad \displaystyle \gamma \in \lt(1,\frac{d+2}d\rt).
\end{cases}
\]
This implies
\[
\intr ( u_{f_\e} - v)\otimes ( u_{f_\e} - v) f_\e\,dv \to C_d \rho^\gamma \mathbb{I}_{d \times d}
\]
for some $C_d>0$ as $\e \to 0$ due to \eqref{moment_comp}.  Here $\mathbb{I}_{d \times d}$ denotes the $d \times d$ identity matrix. Thus at the formal level we derive the following barotropic Euler system from \eqref{formal} as $\e \to 0$:
\begin{align}\label{Euler}
\begin{aligned}
&\pa_t \rho  + \nabla_x \cdot (\rho   u ) = 0,\cr
&\pa_t (\rho   u ) + \nabla_x \cdot (\rho   u  \otimes  u ) + C_d\nabla_x \rho^\gamma  =0.
\end{aligned}
\end{align}
Note that the entropy for the above system is given by
\[
\eta(\rho, u) = \frac12 \rho |u|^2 + \frac{C_d}{\gamma-1}\rho^\gamma, \qquad \gamma \in \lt( 1, \frac{d+2}d\rt],
\]
and it follows from \cite{B99} that 
\[
\intr H(M[f],v)\,dv = \eta(\rho_f, u_f),
\]
where the kinetic entropy $H$ is appeared in \eqref{k_entropy}.

\subsection{Main result}

In order to state our main theorem, we first introduce a notion of weak solutions to the equation \eqref{BGK}.

\begin{definition}\label{def_weak} For a given $T>0$, we say that $f$ is a weak solution to \eqref{BGK} if the following conditions are satisfied: 
	\begin{itemize}
		\item[(i)] $f$ satisfies
		\[
		\left\{ \begin{array}{ll}
\displaystyle f \in L^\infty(0,T; L^1_+ \cap L^\infty(\om \times \R^d)), & \textrm{if } \displaystyle  \gamma=\frac{d+2}{d}\\[3mm]
\displaystyle f \in L^\infty(0,T; L^1_+ \cap L^{1+\frac{2}{n}}(\om \times \R^d)) & \textrm{otherwise}
  \end{array} \right.,
\]
		\item[(ii)] for all $\varphi \in \mc^1_c(\Omega \times \R^d \times [0,T])$ with $\varphi(x,v,T) = 0$,
		\begin{align*}
		- \intor f_0 \varphi_0\,dxdv - \int_0^T \intor f (\pa_t \varphi + v \cdot \nabla_x \varphi)\,dxdvdt = \int_0^T \intor \lt(M[f] - f\rt)\varphi\,dxdvdt.
		\end{align*}
	\end{itemize}
\end{definition}

\begin{theorem}\label{main_thm} Let $T>0$, and assume $\gamma \in (1,3]$ when $d=1$ and $\gamma \in (1,\frac{d+4}{d+2}]\cup \{\frac{d+2}{d}\}$ when $d\geq 2$. Suppose that the initial data $f_0$ satisfies
\[
\left\{ \begin{array}{ll}
\displaystyle f_0 \in L^1_+ \cap L^\infty(\om \times \R^d), \quad  (|x|^2+|v|^2)f_0  \in L^1(\om \times \R^d) & \textrm{if } \displaystyle  \gamma=\frac{d+2}{d}\\[3mm]
\displaystyle f_0  \in L^1_+\cap L^{1+\frac 2n}(\T^d \times \R^d),\quad |v|^2f_0  \in L^1(\T^d \times \R^d) & \textrm{otherwise}
  \end{array} \right..
\]	
Then there exists at least one weak solution $f$ to the equation \eqref{BGK} in the sense of Definition \ref{def_weak} satisfying
	\[
\left\{ \begin{array}{ll}
\displaystyle\sup_{0 \leq t \leq T}\|f(\cdot,\cdot,t)\|_{L^1\cap L^\infty} \leq \|f_0\|_{L^1 \cap L^\infty} +1 & \textrm{if } \displaystyle  \gamma=\frac{d+2}{d}\\[3mm]
\displaystyle\sup_{0 \leq t \leq T}\|f(\cdot,\cdot,t)\|_{L^1\cap L^{1+\frac 2n}} \leq \|f_0\|_{L^1\cap L^{1+\frac 2n} }+\||v|^2f_0\|_{L^1}  & \textrm{otherwise}
  \end{array} \right.
\]	
and the kinetic entropy inequality:
	\bq\label{kin_ineq}
\intor H(f,v)
\,dxdv + \int_0^t \intor H(f,v)
-H(M[f],v)
\,dxdvds\leq \intor H(f_0,v)
\,dxdv 
\eq
	for $t \in [0,T]$.
\end{theorem}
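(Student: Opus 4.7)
My plan is to construct a weak solution via a semi-implicit iteration scheme that linearizes the equilibrium, derive uniform bounds combining mass conservation, the minimization principle \eqref{minimization}, and the moment identities \eqref{moment_comp}, and then pass to the nonlinear limit by velocity averaging. The kinetic entropy inequality \eqref{kin_ineq} will then follow by a weak lower semicontinuity argument.

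\textbf{Iteration and a priori bounds.} Given $f^k\ge 0$, I would define $f^{k+1}$ as the Duhamel solution of
\[
\pa_t f^{k+1} + v\cdot \nabla_x f^{k+1} + f^{k+1} = M[f^k], \qquad f^{k+1}|_{t=0} = f_0,
\]
namely $f^{k+1}(x,v,t) = e^{-t} f_0(x-vt,v) + \int_0^t e^{-(t-s)} M[f^k](x-v(t-s),v,s)\,ds \ge 0$. Using $\intr M[f^k]\,dv = \rho_{f^k}$, an induction on $k$ gives $\|f^{k+1}(t)\|_{L^1}=\|f_0\|_{L^1}$ for all $t\in[0,T]$. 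In the critical case $\gamma=(d+2)/d$ the equilibrium is pointwise bounded by $1$, so the Duhamel formula yields $\|f^{k+1}\|_{L^\infty}\le \max(\|f_0\|_{L^\infty},1)$. In the soft range $\gamma\in(1,(d+2)/d)$, I would multiply the equation by $(f^{k+1})^{2/n}$ and integrate to get, after Young's inequality, $\frac{d}{dt}\iint (f^{k+1})^{1+2/n}\,dxdv + c\iint (f^{k+1})^{1+2/n}\,dxdv \le C\iint (M[f^k])^{1+2/n}\,dxdv$, and the right-hand side is controlled by $\int H(f^k,v)\,dv$ via \eqref{k_entropy} and \eqref{minimization}, provided the kinetic energy $\iint |v|^2 f^k\,dxdv$ is already bounded uniformly in $k$. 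Testing against $|v|^2/2$ and, in the case $\om=\R^d$, against $|x|^2$, combined with \eqref{moment_comp} and \eqref{minimization}, closes these moment estimates uniformly in $k$ and simultaneously produces a uniform bound on $\iint H(f^{k+1},v)\,dxdv$.

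\textbf{Compactness and passage to the limit.} From these bounds, $\{f^k\}$ admits a weak-$\ast$ limit $f$ in $L^\infty(0,T;L^p(\om\times\R^d))$ with $p=\infty$ or $p=1+2/n$. To identify $M[f^k]\to M[f]$, I need strong compactness of the macroscopic moments $\rho_{f^k}$ and $\rho_{f^k}u_{f^k}$. I would obtain this via a DiPerna--Lions--Meyer velocity averaging lemma applied to $\pa_t f^{k+1}+v\cdot\nabla_x f^{k+1}=M[f^k]-f^{k+1}$, using the $L^{1+2/n}$ (resp. $L^\infty$) bound to provide equi-integrability in $f^{k+1}$ and the second-moment bound to control velocity tails. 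Strong $L^1_{\mathrm{loc}}$ convergence of $\rho_{f^k}$ together with convergence of $\rho_{f^k}u_{f^k}$ then gives a.e. convergence of $(\rho_{f^k},u_{f^k})$ on $\{\rho_f>0\}$; since the explicit formula \eqref{Maxwellian} is continuous in $(\rho,u)$ with a $v$-envelope dominated by the $L^{1+2/n}$ (or $L^\infty$) bound, dominated convergence gives $M[f^k]\to M[f]$ in $L^1_{\mathrm{loc}}$, and the weak formulation of Definition \ref{def_weak} passes to the limit.

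\textbf{Entropy inequality and main obstacle.} Multiplying the iteration equation by $H'(f^{k+1},v)$ and integrating, the chain rule together with \eqref{minimization} applied at level $k$ yields a semi-discrete analogue of \eqref{kin_ineq} for $f^{k+1}$. Weak lower semicontinuity of the convex functional $f\mapsto \iint H(f,v)\,dxdv$, combined with the strong convergence of moments which identifies $\int H(M[f],v)\,dv=\eta(\rho_f,u_f)$ in the dissipation, transfers the inequality to the limit. The principal obstacle will be the compactness step: the interplay of the uniform $L^{1+2/n}$ bound, the kinetic energy bound, and velocity averaging is delicate, and the stated $\gamma$-range is precisely what makes the Young inequality step above yield a right-hand side that can be absorbed through \eqref{minimization} and the entropy bound uniformly in $k$. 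Outside this range, the entropy dissipation no longer dominates the nonlinear forcing, the iteration ceases to produce a convergent sequence of approximations, and a different construction would be needed.
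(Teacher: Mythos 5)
Your overall architecture (linearized Duhamel iteration, uniform bounds via moments and the minimization principle, velocity averaging, lower semicontinuity of the entropy) mirrors the paper's, but you skip the step that actually makes the iteration produce a solution: a strong Cauchy estimate for $\{f^k\}$. Weak-$\ast$ compactness only gives, along a subsequence $k_j$, limits $f^{k_j}\rightharpoonup f$ and $f^{k_j+1}\rightharpoonup g$, and even granting strong compactness of $\rho_{f^{k_j}},\,\rho_{f^{k_j}}u_{f^{k_j}}$ from averaging so that $M[f^{k_j}]\to M[f]$, you only conclude that $g$ solves the \emph{linear} transport equation with forcing $M[f]$. There is no mechanism forcing $g=f$: the iteration map is not shown to be a contraction, and subsequential limits of $f^k$ and $f^{k+1}$ need not coincide. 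The paper closes this by proving that $\{f^k\}$ is Cauchy in $L^\infty(0,T;L^1_2)$, which hinges on the Lipschitz estimate
\[
\intor (1+|v|^2)\,|M[f]-M[g]|\,dxdv \le C\into |\rho_f-\rho_g|+|u_f-u_g|\,dx
\]
of Lemma~\ref{lipschitz}. This lemma is the technical core of the paper and your proposal does not invoke (or realize the need for) anything like it.

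\textbf{Why regularization is not optional.} The Lipschitz estimate requires pointwise bounds on $\rho$ and $u$ --- upper bounds in the case $\gamma=\frac{d+2}{d}$, and additionally a strictly positive lower bound on $\rho$ in the range $\gamma\in(1,\frac{d+4}{d+2}]$ --- and the raw iteration does not supply any of these. Your uniform bounds are only integral (mass, $L^{1+2/n}$, kinetic energy), not pointwise, so $u_{f^k}=(\rho_{f^k}u_{f^k})/\rho_{f^k}$ is in particular uncontrolled where $\rho_{f^k}$ is small. The paper resolves this by first solving the $\e$-regularized problem \eqref{reg_eqn}--\eqref{reg_fcts}, where $\rho^\e_{f}$ and $u^\e_{f}$ are bounded by $1/\e^{2d+1}$ by construction, and by adding $\e e^{-|v|^2}/(1+|x|^q)$ to the initial data so that $\rho_{f^k}$ has a positive lower bound. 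Only then is the Lipschitz/Cauchy step available, and the limit $\e\to 0$ is a separate stage (done with your weak-compactness and velocity-averaging ideas, which are correct there). In short, your iteration-plus-compactness scheme cannot be made to converge as stated; the missing contraction argument, and the regularization needed to enable it, are essential and constitute the bulk of the paper's proof.
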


\subsection{Remarks}

Several remarks regarding Theorem \ref{main_thm} are in order:
\begin{itemize}
\item[(i)] 	In the present work, the spatial domain $\om$ is differently chosen depending on $\gamma$ as
\[
\displaystyle  \om = \left\{ \begin{array}{ll}
\displaystyle \R^d \mbox{ or }\T^d & \textrm{if } \displaystyle  \gamma=\frac{d+2}{d}\\[3mm]
\displaystyle  \T^d & \textrm{otherwise}
  \end{array} \right..
\]
Due to some technical difficulties, we were not able to consider the whole space $\om =\R^d$ in the case of $\gamma \in (1, \frac{d+2}{d})$. Precisely, in our strategy, the lower bound on $\rho$ is required to obtain the Lipschitz continuity of the equilibrium function $M$ in some weighted function space (see Section \ref{sec_idea} for details) in that case. On the other hand, the density has a finite mass, thus the boundedness of the spatial domain is indispensable.  If we consider a bounded spatial domain with appropriate boundary conditions, then one may use our idea of proof to construct the global-in-time existence of weak solutions in the case of $\gamma \in (1, \frac{d+2}{d})$.

\item[(ii)] As mentioned above, the equilibrium function $M$ is defined differently depending on $\gamma$, which results in different functional spaces in which the solution belongs to. For $\gamma=\frac{d+2}{d}$, we have the uniform  $L^1\cap L^\infty$-estimate, which enables us to make use of the relationship between macroscopic observable quantities and the kinetic energy (see Lemma \ref{lem_tech}). This, together with the velocity averaging lemma developed in \cite{KMT13}, plays a key role in proving the main result, Theorem \ref{main_thm}. In the other cases, however, the $L^\infty$-estimate is missing, so we apply the Dunford-Pettis theorem and the averaging lemma \cite{GLPS88,Per89} instead to complete the proof. 
\item[(iii)]  In the mono-dimensional case, the global-in-time existence of $L^1$-solution $f$ satisfying the entropy inequality \eqref{kin_ineq} is studied in \cite{BB00, BB02} for $\gamma \in (1,3)$. To the extent of our knowledge, our main theorem, Theorem \ref{main_thm}, provides for the first time the global existence theory for the equation \eqref{BGK} when $\gamma = 3$ in one dimension. 
\item[(iv)] As mentioned above, the hydrodynamic limit from the BGK-type kinetic equation \eqref{formal} to the multi-dimensional barotropic Euler system \eqref{Euler} is obtained in \cite{BV05} under the assumption on the existence of weak solutions to \eqref{formal} satisfying the kinetic entropy inequality. Thus, our existence theory, Theorem \ref{main_thm}, makes the results of \cite{BV05} completely rigorous for $\gamma \in (1,3]$ when $d=1$ and $\gamma \in (1,\frac{d+4}{d+2}]\cup \{\frac{d+2}{d}\}$ when $d\geq 2$.
\item[(v)] For the derivation of \eqref{Euler} with the isothermal pressure law, i.e. $\gamma = 1$, the following nonlinear Vlasov--Fokker--Planck equation can be considered:
\[
\pa_t f_\e + v\cdot\nabla_x f_\e  = \frac1\e \nabla_v \cdot (\nabla_v f_\e - (u_{f_\e} - v) f_\e).
\]
For the above equation, the global-in-time existence of weak solutions and the hydrodynamic limit are studied in \cite{BV05, KMT13}. The global existence and uniqueness of classical solutions near the global Maxwellian and its time-asymptotic behavior are also investigated in \cite{C16}.
\item[(vi)] We were not able to cover the whole range of $\gamma$, $(1, \frac{d+2}d]$ when $d \geq 2$. The reason lies in our inability to handle some singularities arising from boundaries of the supports of the equilibrium function $M$. We give more detailed explanations on that in Remark \ref{rmk_gap} below.
\end{itemize}

 \subsection{Main difficulties and strategy of the proof}\label{sec_idea}

One of the main ingredients in the proof of Theorem \ref{main_thm} is to obtain the Cauchy estimates for the approximation sequence associated to \eqref{BGK} in a proper functional space. Thus, it is necessary to control the equilibrium function $M[f]$ with macroscopic observable quantities $\rho_f$ and $u_f$. Related results on other BGK-type models can be found in \cite{Per89,PY16} in which the equilibrium function is dealt with in a weighted $L^1$-space by $(1+|v|^2)$ and \cite{CLY21,CY20,CY20 2}, where the $L^\infty$-space weighted by $(1+|v|^{q})$ is employed instead. In the current work, the situation is quite different compared to the previous works since our equilibrium function takes the form of indicator or positive functions \eqref{Maxwellian}, which does not allow us to apply any useful tools, such as mean value theorem typically used for BGK models \cite{CLY21,PP93,PY16,Yun15}. Thus, to get an estimate of the equilibrium function, we inevitably have to do some explicit calculations. 

For controlling our equilibrium functions, we notice that the $L^1$-space is quite appropriate to extract an information from the support of   $M[f]$, and the weight function $(1+|v|^2)$ is required due to the presence of the bulk velocity $u_f$. For this reason, we introduce a weighted space $L^1_2(\om\times \R^d )$ equipped with the norm:
\[
\|f\|_{L^1_2}:=\intor (1+|v|^2)f(x,v)\,dxdv.
\]
We employ the space $L^1_2(\om\times \R^d )$ to obtain the Lipschitz continuity for the equilibrium function $M[f]$. For this, it is required to calculate the integral over the symmetric difference of supports of $M[f]$ and $M[g]$. Here, we shall give a brief idea of showing the Lipschitz continuity for the equilibrium function $M[f]$ in the case $\gamma=\frac{d+2}{d}$ and $d \geq 2$. Note that 
\begin{align*}
\|f-g\|_{L^1_2} &=\intor (1+|v|^2) |M[f]-M[g]|\,dxdv \cr
&= \into \lt(\int_{\mathsf{supp}(M[f]) \cup \mathsf{supp}(M[g])} (1+|v|^2) |M[f]-M[g]|\,dv \rt)dx.
\end{align*}
Since the symmetric difference of the supports of  $M[f]$ and $M[g]$ is determined depending on the relation between macroscopic fields of $f$ and $g$, we split the velocity-domain $\mathbb{R}^d$ into four parts: $\R^d = \cup_{i=1}^4 \sfD_i$, where
\begin{align*}
\sfD_1&:=\left\{v\in\mathbb{R}^d : |u_f-u_g|> r_f+r_g  \right\},\cr
\sfD_2&:=\left\{v\in\mathbb{R}^d :  \left|r_f-r_g \right|\le |u_f-u_g|\le r_f+r_g\quad \text{and}\quad |u_f-u_g|^2> \left|r_f^2-r_g^2 \right|  \right\},\cr
\sfD_3&:=\left\{v\in\mathbb{R}^d : |r_f-r_g|\le |u_f-u_g|\le r_f+r_g\quad \text{and}\quad |u_f-u_g|^2\le\left|r_f^2-r_g^2\right|  \right\}, \quad \mbox{and}\cr
\sfD_4&:=\left\{v\in\mathbb{R}^d :  |u_f-u_g|\le |r_f-r_g| \right\}.
\end{align*}
Here for $h \in \{f,g\}$, $r_h$ and $u_h$ represent the radius and center of the support of $M[h]$, respectively. Since the support of $M[h]$ is the $d$-ball centered at $u_h$ of radius $r_h$, $\sfD_1$ indicates the case that the two $d$-balls do not intersect, and $\sfD_4$ denotes the case where the small one is completely contained within the large one, thus  $\sfD_1$ and $\sfD_4$ are rather easy to handle. The major difficulties arise when the two $d$-balls partially intersect, and to resolve the trouble, we shall split that case into $\sfD_2$ and $\sfD_3$. For reader's convenience, we illustrate the domains of $\sfD_2$ and $\sfD_3$ in Fig. \ref{domains}. Obviously, it is necessary not to lose any information on differences between macroscopic observable quantities of $f$ and $g$. In the analysis, it is important to cancel several problematic terms out instead of controlling them. Thus a careful and delicate analysis is required to deal with the symmetric difference of cases $\sfD_2$ and $\sfD_3$. For instance, to investigate the $L^1$-estimate of symmetric difference, we make use of a spherical cap $\mathbb{V}_f$ (see Fig. \ref{Spherical cap}) instead, which is calculated as
$$
|\mathbb{V}_f|=\frac{|\mathbb{S}_{d-2}|}{|\mathbb{S}_{d-1}|}\rho_f\left(-\frac{1}{d-1}(\sin^{d-1}\theta_f)\cos\theta_f +\int_0^{\theta_f}\sin^{d-2}\theta\,d\theta\right),
$$
with
$$
\theta_f=\arccos \left(\frac{r_f^2-r_g^2+|u_f-u_g|^2}{2r_f |u_f-u_g|}\right).
$$
Note that the information needed is inherent in $\theta_f$, and all constants must be preserved to match other terms that need to be removed. Thus it is necessary to identify the last term of $\mathbb{V}_f$ exactly, and for this we employ the following relation: 
$$
\int \sin^d x\,dx=-\frac 1d(\sin^{d-1}x)\cos x+\frac{d-1}{d}\int \sin^{d-2}x\,dx.
$$
Here, detailed calculations are performed differently  depending on whether the dimension value $d$ is even or odd. In addition,  when it comes to the $L^1$-estimate weighted by $|v|^2$, the analysis becomes more complicated since we have to extract the information by dealing with the following two terms together
$$
\int_{\mathbb{V}_f}u_f\cdot v\,dv+\int_{\mathbb{V}_g}u_g\cdot v\,dv.
$$
To overcome this difficulty, we apply  a rotation matrix \eqref{rotation} which measures the polar angle of the spherical coordinate system from the $v_d$-axis.  In this procedure, since the inner product is invariant under rotations and spherical caps have a symmetric structure, the calculation becomes much  easier to figure out as follows 
$$
\int_{\mathbb{V}_f}u_f\cdot v\,dv =(Ru_f)\cdot\left(0,0,\cdots,0,1   \right)|\mathbb{S}_{d-2}|\int_0^{\theta_f}\int_{\frac{\cos\theta_f }{\cos\theta}r_f}^{r_f} r^{d}\cos\theta(\sin^{d-2}\theta)  \,drd\theta.
$$
Moreover, since $\mathbb{V}_g$ is positioned in the opposite direction to the region $\mathbb{V}_f$, one finds
$$
\int_{\mathbb{V}_f}u_f\cdot v\,dv =(Ru_f)\cdot\left(0,0,\cdots,0,1   \right)|\mathbb{S}_{d-2}|\int_{\pi-\theta_g}^{\pi}\int_{\frac{\cos\theta_g }{\cos\theta}r_g}^{r_g} r^{d}\cos\theta(\sin^{d-2}\theta)  \,drd\theta
$$
in the same manner. Such geometric relation gives the negative sign between the above target terms, and this enables us to obtain the Lipschitz continuity of $M$. See the proof of Lemma \ref{lipschitz} below for more details. 
 \begin{figure}[!h]
 	\begin{center}
 		\includegraphics[scale=0.45]{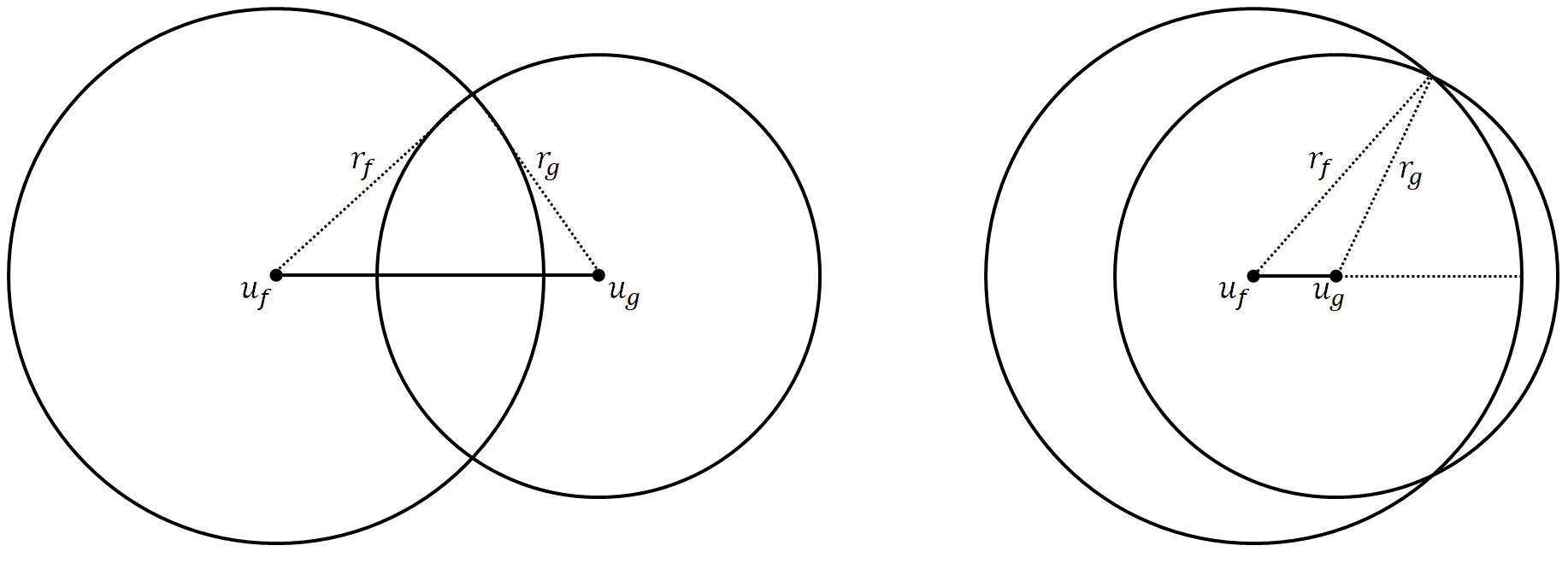}
\caption{
Illustrations of the domains $\sfD_2$ \& $\sfD_3$
}
\label{domains}
 	\end{center}
 \end{figure} 
We also would like to mention that the bounds of macroscopic quantities $\rho_f$ and $u_f$ should be assumed to control for unnecessary terms resulting from the structure of the support of $M[f]$, or the presence of the weight $|v|^2$. In the case of classical BGK model and its variants, where the equilibrium function has the form of the Maxwellian, this matter can be handled by establishing the existence of mild solutions with specific solution space containing the bounds of $\rho_f$ and $u_f$, see \cite{PP93,PY16,Yun15} for related works.  For this, weighted $L^\infty$ estimates for macroscopic fields provided by Perthame and Pulvirenti \cite{PP93} play an important role in closing the iteration scheme on the solution space. However, it does not work at all in the present work due to the totally different structure of the equilibrium function. To resolve that technical difficulty, we consider the regularized equation of \eqref{BGK} and set the regularized macroscopic fields as
\[
\rho^\e_{f_\e} := \frac{\rho_{f_\e} * \theta^\e}{1 + \e^{d+1} \rho_{f_\e} * \theta^\e} \quad \mbox{and} \quad u^\e_{f_\e} := \frac{(\rho_{f_\e} u_{f_\e})*\theta^\e}{\rho_{f_\e} * \theta^\e + \e^{2d+1}(1 + |(\rho_{f_\e} u_{f_\e})*\theta^\e|^2)},
\]
where $0<\e \le 1$ denotes the regularization parameter and $\theta^\e$ the mollifier (see \eqref{reg_eqn}). Then, we can control the regularized equilibrium function $M^\e[f_\e]$ thanks to the $\e$-dependent bounds of $\rho^\e_{f_\e}$ and $u^\e_{f_\e}$. This enables us to derive the Cauchy estimates for the approximation sequence associated to the regularized equation by using Lemma \ref{lipschitz} below. After establishing the weak solutions to the regularized equation, we obtain several bound estimates for the solution to the regularized equation uniformly in $\e$ and apply appropriate weak and strong compactness arguments in order to pass to the limit $\e\to 0$. Finally, we show that the limiting function is indeed the solution to \eqref{BGK} in the sense of Definition \ref{def_weak}.

\begin{remark}\label{rmk_gap}
For $1<\gamma <\frac{d+2}{d}$, the equilibrium function $M[f]$ reads
\begin{equation*}
M [f]=  
c\left(\frac{2\gamma}{\gamma-1}\rho_f^{\gamma-1}-|v-u_f|^2\right)^{n/2} \mathbf{1}_{|v-u_f|^2\le \frac{2\gamma}{\gamma-1}\rho_f^{\gamma-1}}.
\end{equation*} 
Note that when we apply the mean value theorem, a transitional term takes the form of
$$
\int_0^1  \left\{\theta\left(r_f^2-|v-u_f|^2 \right)+(1-\theta)\left(r_g^2-|v-u_g|^2\right) \right\}^{\frac n2-1} d\theta,
$$
which blows up at the intersection of boundaries of supports for $M[f]$ and $M[g]$. To avoid the singularity, we restrict ourselves to the case $1<\gamma \le \frac{d+4}{d+2}$, which corresponds to the case  $n\ge2$. 
\end{remark}

\subsection{Organization of the paper}
The rest of this paper is organized as follows. In Section \ref{sec_cau}, the Lipschitz continuity of $M[f]$ is investigated in a weighted space $L^1_2(\om\times \R^d)$. Here, we only deal with the case of end point $\gamma=\frac{d+2}{d}$ and postpone the other cases to Appendix \ref{app_lem} for better readability.  We introduce the regularized equation of \eqref{BGK} in the case of $\gamma=\frac{d+2}{d}$ and prove the main result, Theorem \ref{main_thm} in Section \ref{sec_bcase}. Finally, Section \ref{sec_4} is
devoted to the proof of Theorem \ref{main_thm} in the case $\gamma\in (1,3)$ when $d=1$, and $\gamma \in (1,\frac{d+4}{d+2}]$ when $d\ge 2$.

%
%
%
%

\section{Lipschitz estimate of the equilibrium function in $L^1_2(\om \times \R^d)$}\label{sec_cau}
The aim of this section is to show the Lipschitz continuity of $M[f]$ in $L^1_2(\om \times \R^d)$. More precisely, we prove that $L^1_2(\om \times \R^d)$-norm of $M[f]-M[g]$ is bounded by differences between macroscopic fields of $f$ and $g$. As mentioned in Introduction, the Lipschitz estimate will play a crucial role in constructing solutions to the regularized equation of \eqref{BGK} later in Section \ref{sec_reg}.

\begin{lemma}\label{lipschitz}
	Let the equilibrium function $M[f]$ be given by \eqref{Maxwellian}  and consider two cases:  $\gamma\in (1,3]$ when $d=1$, and  $\gamma \in (1,\frac{d+4}{d+2}] \cup\{\frac{d+2}{d}\}$ when $d\ge 2$. 
	\begin{itemize}
	\item[(i)] In the case $\gamma=\frac{d+2}{d}$ with $d\ge 1$, suppose that there exist positive constants $C_1$ and $C_2$  such that
	\begin{equation}\label{bounds}
\rho_h \le C_1 \quad \mbox{and} \quad |u_h| \le C_2, \quad h \in \{f, g\}.
	\end{equation}
	\item[(ii)] For $\gamma\in (1,3]$ with $d=1$ or $\gamma \in (1,\frac{d+4}{d+2}] $ with $d\ge 2$, we assume that there exist positive constants $C_0$, $C_1$, and $C_2$  such that
\begin{equation}\label{bounds2}
C_0\le \rho_h \le C_1 \quad \mbox{and} \quad |u_h| \le C_2, \quad h \in \{f, g\}.
\end{equation}
\end{itemize}
Then we have
	$$
	\intor (1+|v|^2) |M[f]-M[g]|\,dxdv\le C\into |\rho_f-\rho_g |+|u_f-u_g | \,dx
	$$
	for some $C>0$ which depends only on $d$, $\gamma$, and $C_i, i=0,1,2$.
\end{lemma}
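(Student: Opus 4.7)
The plan is to exploit the geometric structure of the supports of $M[f]$ and $M[g]$, which in the case $\gamma = \frac{d+2}{d}$ are closed balls $B(u_h, r_h)$ with $r_h := (c_d \rho_h)^{1/d}$, so that $|M[f]-M[g]|$ is exactly the indicator of the symmetric difference $B(u_f,r_f)\triangle B(u_g,r_g)$. Accordingly I would start by reducing the problem pointwise in $x$ and writing
\[
\intor (1+|v|^2)|M[f]-M[g]|\,dxdv = \into \int_{B(u_f,r_f)\triangle B(u_g,r_g)}(1+|v|^2)\,dv\,dx,
\]
then show the inner integral is controlled by $|\rho_f-\rho_g|+|u_f-u_g|$ with a constant depending only on $C_1$ and $C_2$ via the chain $|r_f^d - r_g^d|\lesssim |\rho_f-\rho_g|$ and $r_h \le (c_d C_1)^{1/d}$.

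Next I would split $\R^d$ (really the set of $x$-parameters) into the four configurations $\sfD_1,\ldots,\sfD_4$ from the introduction. On $\sfD_1$ the balls are disjoint, so the symmetric difference is their disjoint union and its measure equals $\frac{|\mathbb{S}_{d-1}|}{d}(r_f^d + r_g^d)$; combined with the lower bound $|u_f-u_g| > r_f + r_g$ this case is closed by writing $r_f^d + r_g^d$ in terms of $\rho_f + \rho_g$ and exchanging one of the balls' measures against $|u_f-u_g|/(r_f+r_g) > 1$. On $\sfD_4$ the smaller ball is contained in the larger, so the symmetric difference reduces to an annular region of measure $\frac{|\mathbb{S}_{d-1}|}{d}|r_f^d - r_g^d|$; this is immediately controlled by $|\rho_f-\rho_g|$, and the weighted version by $r_h \le (c_d C_1)^{1/d}$ and $|u_h|\le C_2$.

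The crux is $\sfD_2 \cup \sfD_3$, where the two balls genuinely overlap. Here I would introduce the two spherical caps $\mathbb{V}_f \subset B(u_f,r_f)$ and $\mathbb{V}_g \subset B(u_g, r_g)$ cut off by the radical hyperplane, so that $B(u_f,r_f)\triangle B(u_g,r_g) = \mathbb{V}_f \cup \mathbb{V}_g$ up to a null set, with opening half-angles
\[
\theta_f = \arccos\!\lt(\frac{r_f^2 - r_g^2 + |u_f - u_g|^2}{2 r_f |u_f-u_g|}\rt),\qquad \theta_g = \arccos\!\lt(\frac{r_g^2 - r_f^2 + |u_f-u_g|^2}{2 r_g |u_f-u_g|}\rt).
\]
For the unweighted piece I would use the explicit cap-volume formula cited in the excerpt and reduce the remaining trigonometric integral $\int_0^{\theta_h}\sin^{d-2}\theta\,d\theta$ by the recursion $\int\sin^k = -\frac{1}{k}\sin^{k-1}\cos + \frac{k-1}{k}\int\sin^{k-2}$, treating even and odd $d$ separately so that all constants match and the leading $\rho_h$-dependent pieces cancel between $|\mathbb{V}_f|$ and $|\mathbb{V}_g|$, leaving only terms bounded by $|\rho_f-\rho_g|+|u_f-u_g|$ (this is where the separation of $\sfD_2$ and $\sfD_3$ matters, since the sign of $r_f^2-r_g^2$ relative to $|u_f-u_g|^2$ determines which argument of the arccos is bounded away from $\pm 1$).

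For the $|v|^2$-weighted piece I would decompose $|v|^2 = |v-u_h|^2 + 2u_h\cdot(v-u_h)+|u_h|^2$ on each cap so that the radial integrals become elementary and the only nontrivial object is $\int_{\mathbb{V}_f} u_f\cdot v\,dv + \int_{\mathbb{V}_g} u_g \cdot v\,dv$. Following the hint in the introduction I would apply a rotation $R$ sending the axis of each cap to $\bfe_d$; by symmetry the integrals reduce to
\[
\int_{\mathbb{V}_f} u_f\cdot v\,dv = (Ru_f)_d\,|\mathbb{S}_{d-2}|\int_0^{\theta_f}\!\!\int_{\frac{\cos\theta_f}{\cos\theta}r_f}^{r_f} r^{d}\cos\theta \sin^{d-2}\theta\,dr\,d\theta,
\]
and the analogous formula for $\mathbb{V}_g$ but with angles in $[\pi-\theta_g,\pi]$, so the geometric fact that the two caps point in opposite directions produces a crucial minus sign that allows $u_f$ and $u_g$ to combine into $u_f - u_g$. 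The main obstacle I anticipate is precisely this bookkeeping: each of the leading terms in $|\mathbb{V}_f|$ and in the weighted integrals is individually of order $\rho_h$ (not of order $|\rho_f-\rho_g|+|u_f-u_g|$), and the Lipschitz estimate can only emerge from a delicate cancellation, so I would carry all constants exactly throughout and only apply the macroscopic bounds \eqref{bounds} at the very end to absorb residual $O(1)$ factors. Putting the four cases together and integrating in $x$ delivers the desired bound.
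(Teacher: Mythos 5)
Your outline for case (i), the indicator equilibrium $\gamma=\tfrac{d+2}{d}$ with $d\ge 2$, tracks the paper closely: the split into $\sfD_1,\dots,\sfD_4$, the cap-volume formula with the $\sin^k$ reduction formula, the $\bbE_f\cdot\bbE_g$ sign structure via the rotation $R$, and the delayed use of \eqref{bounds}. Two local slips, though: first, the symmetric difference $B_f\triangle B_g$ is not $\mathbb{V}_f\cup\mathbb{V}_g$ (the caps make up the \emph{intersection}, and the paper writes $II=\rho_f+\rho_g-2(|\mathbb{V}_f|+|\mathbb{V}_g|)$, with the leading $\rho_f$ cancelling against $2|\mathbb{V}_f|$, not $|\mathbb{V}_f|$ against $|\mathbb{V}_g|$). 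Second, the real reason $\sfD_2$ and $\sfD_3$ must be handled separately is not an arccos-boundedness issue: on $\sfD_3$ the radical hyperplane lies on the far side of the smaller ball's center, so the corresponding half-angle exceeds $\pi/2$ and the parameterization $\{\frac{\cos\theta_h}{\cos\theta}r_h\le r\le r_h,\,0\le\theta\le\theta_h\}$ breaks down; the paper therefore computes $|\mathbb{B}_g\setminus\mathbb{B}_f|$ directly on $\sfD_3$ via the radius function $\tilde r(\theta)=\sqrt{r_f^2-U_{f,g}^2\sin^2\theta}-U_{f,g}\cos\theta$ rather than through caps.

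The substantive gap is that your plan only treats the $\gamma=\tfrac{d+2}{d}$ case. The lemma also asserts the estimate for the positive-part equilibrium $M[f]=c\bigl(\frac{2\gamma}{\gamma-1}\rho_f^{\gamma-1}-|v-u_f|^2\bigr)_+^{n/2}$ under hypothesis (ii) (with $\gamma\in(1,3)$ for $d=1$ and $\gamma\in(1,\frac{d+4}{d+2}]$ for $d\ge 2$), and none of your geometry-of-indicators argument applies there because $|M[f]-M[g]|$ is no longer the indicator of a symmetric difference. That case is where all of the additional structure of the hypotheses comes from: the mean value theorem applied to $x\mapsto x^{n/2}$ produces the transitional factor $\{\theta(r_f^2-|v-u_f|^2)+(1-\theta)(r_g^2-|v-u_g|^2)\}^{\frac n2-1}$ whose boundedness at the boundary of the supports requires $n\ge 2$ (equivalently $\gamma\le\frac{d+4}{d+2}$), and the lower bound $\rho_h\ge C_0$ in \eqref{bounds2} is used to pass from $|r_f-r_g|$ to $|\rho_f-\rho_g|$ since $r_h=(\frac{2\gamma}{\gamma-1})^{1/2}\rho_h^{(\gamma-1)/2}$ has exponent $<1$. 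Without this second half, the proof does not cover the range of $\gamma$ the lemma claims.
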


As stated before, the equilibrium function is different according to the value $\gamma$. We prove Lemma \ref{lipschitz} by dividing into two cases: $\gamma = \frac{d+2}{d}$ and the others.  Both are rather lengthy and technical, thus for smoothness of reading, we only provide the details of the proof of Lemma \ref{lipschitz} in the case $\gamma = \frac{d+2}d$ here. In this case, we recall that the equilibrium function $M[f]$ reads
\[
M [f]=\mathbf{1}_{|u_f-v|^d\le c_d\rho_f} \quad \mbox{with} \quad c_d=\frac{d}{|\mathbb{S}_{d-1}|}.
\]
We postpone the proof of Lemma \ref{lipschitz} in the other cases of $\gamma$  to Appendix \ref{app_lem}.

\subsection{Proof of Lemma \ref{lipschitz} in the case $d=1$ and $\gamma=3$}

In order to explain the main ideas behind our strategy in higher dimensions, we begin with the one-dimensional case.

For the one-dimensional case, we decompose the integral as follows:
\begin{align*}
\begin{split}
\int_{\mathbb{R}}(1+v^2)\left|M[f]-M[g]\right| dv&=\int_{\mathbb{R}}(1+v^2)\left|\mathbf{1}_{|u_f-v|\le \frac{1}{2}\rho_f}-\mathbf{1}_{|u_g-v|\le \frac{1}{2}\rho_g}\right| dv\cr
&= \lt(\int_{\sfD_1} + \int_{\sfD_2} + \int_{\sfD_3}\rt)(1+v^2)\left|\mathbf{1}_{|u_f-v|\le \frac{1}{2}\rho_f}-\mathbf{1}_{|u_g-v|\le \frac{1}{2}\rho_g}\right| dv\cr
&=: I + II + III,
\end{split}
\end{align*}
where
\begin{align*}\begin{split}
\sfD_1&:=\left\{v\in\mathbb{R}\ :\ |u_f-u_g|> \frac{\rho_f+\rho_g}{2}  \right\},\cr
\sfD_2&:=\left\{v\in\mathbb{R}\ :\ \frac{|\rho_f-\rho_g|}{2}< |u_f-u_g|\le \frac{\rho_f+\rho_g}{2} \right\}, \quad \mbox{and}\cr
\sfD_3&:=\left\{v\in\mathbb{R}\ : \ |u_f-u_g|\le \frac{|\rho_f-\rho_g|}{2} \right\}.
\end{split}\end{align*}
By symmetry, we only prove the case $u_f \le u_g$. 

\vspace{.2cm}

	$\bullet$ Estimate of $I$: On the domain $\sfD_1$, the supports of the indicator functions $M[f]$ and $M[g]$ do not intersect with each other. Thus, we obtain from \eqref{moment_comp} that 
	\begin{align*}
	I&=\int_{\mathbb{R}} (1+v^2)M[f]\,dv+\int_{\mathbb{R}} (1+v^2)M[g]\,dv		\cr
	&=\left(\rho_f+\rho_fu_f^2+\frac{1}{12}\rho_f^3\right)+\left(\rho_g+\rho_gu_g^2+\frac{1}{12}\rho_g^3\right)\cr
	&=\rho_f+\rho_g+ \left(\rho_f+\rho_g\right)|u_f|^2+\rho_g \left(u_g^2-u_f^2\right) +\frac{1}{12}(\rho_f^3+\rho_g^3)
	\end{align*}
	Since $\rho_h$ and $u_h$ $(h \in \{f, g\})$ are bounded, by using the condition of $\sfD_1$, we deduce
	\[
	I \leq  C |u_f-u_g|
	\]
	for some $C>0$ which depends only on $C_i,i=1,2$.
	
	\vspace{.2cm}
	
	$\bullet$ Estimate of $II$: In this case, the supports of $M[f]$ and $M[g]$ partially intersect with each other, thus we need to estimate the symmetric difference of them. Direct computations yield
	\begin{align*}
	II &=\int_{\mathbb{R}} (1+v^2)M[f]\,dv+\int_{\mathbb{R}} (1+v^2)M[g]\,dv-2\int_{u_g-\frac{1}{2}\rho_g}^{u_f+\frac{1}{2}\rho_f} (1+v^2)\,dv\cr
	&=\rho_f+\frac13\left\{\left(u_f+\frac{\rho_f}{2}\right)^3-\left(u_f-\frac{\rho_f}{2}\right)^3 \right\}+\rho_g+\frac13\left\{\left(u_g+\frac{\rho_g}{2}\right)^3-\left(u_g-\frac{\rho_g}{2}\right)^3 \right\}\cr
	&\quad -2\left(u_f-u_g+\frac{\rho_f+\rho_g}{2}\right)-\frac 23\left\{ \left(u_f+\frac{\rho_f}{2}\right)^3-\left(u_g-\frac{\rho_g}{2}\right)^3 \right\}\cr
	&=2\left(u_g-u_f\right)+\frac{1}{3}\left\{\left(u_g+\frac{\rho_g}{2}\right)^3-\left(u_f+\frac{\rho_f}{2}\right)^3+\left(u_g-\frac{\rho_g}{2}\right)^3-\left(u_f-\frac{\rho_f}{2}\right)^3\right\}.
	\end{align*}
We then further estimate the second term on the right hand side of the above as
	\begin{align*}
	&\frac{1}{3}\left\{\left(u_g+\frac{\rho_g}{2}\right)^3-\left(u_f+\frac{\rho_f}{2}\right)^3+\left(u_g-\frac{\rho_g}{2}\right)^3-\left(u_f-\frac{\rho_f}{2}\right)^3\right\}\cr
	&\quad =\frac{1}{3}\left(u_g-u_f+\frac{\rho_g-\rho_f}{2}\right)\left\{\left(u_g+\frac{\rho_g}{2}\right)^2+\left(u_g+\frac{\rho_g}{2}\right)\left(u_f+\frac{\rho_g}{2}\right)+\left(u_f+\frac{\rho_f}{2}\right)^2\right\}\cr
	&\qquad +\frac{1}{3}\left(u_g-u_f-\frac{\rho_g-\rho_f}{2}\right)\left\{\left(u_g-\frac{\rho_g}{2}\right)^2+\left(u_g-\frac{\rho_g}{2}\right)\left(u_f-\frac{\rho_f}{2}\right)+\left(u_f-\frac{\rho_f}{2}\right)^2\right\}.
	\end{align*}
	This together with \eqref{bounds} and the condition of $\sfD_2$ gives
	\begin{align*}
	II &\le  2\left|u_f-u_g\right|+ \frac{1}{3}\left(|u_g-u_f|+\frac{|\rho_f-\rho_g|}{2}\right)\left|\left(u_g+\frac{\rho_g}{2}\right)^2+\left(u_g+\frac{\rho_g}{2}\right)\left(u_f+\frac{\rho_g}{2}\right)+\left(u_f+\frac{\rho_f}{2}\right)^2\right| \cr
	&\quad + \frac{1}{3}\left(|u_g-u_f|+\frac{|\rho_f-\rho_g|}{2}\right)\left|\left(u_g-\frac{\rho_g}{2}\right)^2+\left(u_g-\frac{\rho_g}{2}\right)\left(u_f-\frac{\rho_f}{2}\right)+\left(u_f-\frac{\rho_f}{2}\right)^2\right| \cr
	&\le C\left(|\rho_f-\rho_g|+|u_f-u_g|\right).
	\end{align*}
	
	\vspace{.2cm}
	
$\bullet$ Estimate of $III$: Since one of the supports of $M[f]$ and $M[g]$ is completely contained within the other, we observe
	\begin{align*}
	III&=\left|\rho_f+\frac13\left\{\left(u_f+\frac{\rho_f}{2}\right)^3-\left(u_f-\frac{\rho_f}{2}\right)^3 \right\}-\rho_g-\frac13\left\{\left(u_g+\frac{\rho_g}{2}\right)^3-\left(u_g-\frac{\rho_g}{2}\right)^3 \right\} \right|.
	\end{align*}
In a similar manner as in the estimate of $II$, we have
	\begin{align*}
	III \le C\left(|\rho_f-\rho_g|+|u_f-u_g|\right).
	\end{align*}
Combining all of the above estimates concludes the desired result for the one-dimensional case. 

\subsection{Proof of Lemma \ref{lipschitz} in the case $d\geq 2$ and $\gamma=\frac{d+2}{d}$}\label{ssec_bdy}
We next provide the proof for the multi-dimensional case. 
	

We decompose the domain $\mathbb{R}^d$ into four parts as 
	\begin{align*}
	\sfD_1&:=\left\{v\in\mathbb{R}^d : |u_f-u_g|> r_f+r_g  \right\},\cr
	\sfD_2&:=\left\{v\in\mathbb{R}^d :  \left|r_f-r_g \right|\le |u_f-u_g|\le r_f+r_g\quad \text{and}\quad |u_f-u_g|^2> \left|r_f^2-r_g^2 \right|  \right\},\cr
	\sfD_3&:=\left\{v\in\mathbb{R}^d : |r_f-r_g|\le |u_f-u_g|\le r_f+r_g\quad \text{and}\quad |u_f-u_g|^2\le\left|r_f^2-r_g^2\right|  \right\}, \quad \mbox{and}\cr
	\sfD_4&:=\left\{v\in\mathbb{R}^d :  |u_f-u_g|\le |r_f-r_g| \right\},
	\end{align*}	
	where we used the following notations: 
	$$
	r_f:=c_d^{\frac 1d}\rho_f^{\frac 1d}, \quad r_g:=c_d^{\frac 1d}\rho_g^{\frac 1d},\quad U_{f,g}:=|u_f-u_g|,\quad \theta_f:=\arccos \left(\frac{r_f^2+U_{f,g}^2-r_g^2}{2r_f U_{f,g}}\right),
	$$
and
	\[
	\tilde{\theta}:=\arccos\left(\frac{r_f^2-r_g^2-U_{f,g}^2}{2r_gU_{f,g}}\right).
	\]
	We now calculate the integral of the weight function $1+|v|^2$ over the symmetric difference of supports for $M[f]$ and $M[g]$ represented by each $\sfD_i$ $(i=1,\dots,4).$ For this, we deal with the weight function $1$ and $|v|^2$ separately and only prove the case $r_g\le r_f$ by symmetry.
	
\vspace{.2cm}

\noindent {\bf ($L^1$-estimate of $M[f] - M[g]$)} We denote
	\[
	\sum_{i=1}^4\int_{\sfD_i} \left|M[f]-M[g]\right| dv =: I + II + III + IV.
	\]
	
	\vspace{.2cm}
	
	$\bullet$ Estimates of $I$ and $IV$: By \eqref{moment_comp}, we get
	\begin{align*}
	I=\int_{\mathbb{R}^d} M[f]\,dv+\int_{\mathbb{R}^d} M[g]\,dv=\rho_f+\rho_g\le \left(\rho_f^{\frac 1d}+\rho_g^{\frac 1d} \right)^d\le C|u_f-u_g|^d
	\end{align*}
	due to the condition of $\sfD_1$. We then use the bound assumption on $u_f$ and $u_g$ to deduce
	\begin{align}\label{I_1.}\begin{split}
	I\le C|u_f-u_g|.
	\end{split}\end{align}
	For the estimate of $IV$, we readily observe
	\begin{equation}\label{I_4.}
	IV=\left|\int_{\mathbb{R}^d}  M[f] \,dv-\int_{ \mathbb{R}^d}  M[g] \,dv\right|=\left|\rho_f-\rho_g\right|.
	\end{equation}
	
	\vspace{.2cm}
	
	$\bullet$ Estimates of $II$: Note that 
	\begin{align}\label{I_20}
	II=\rho_f+\rho_g-2(|\mathbb{V}_f|+|\mathbb{V}_g|),
	\end{align}
	where $\mathbb{V}_f$ and $\mathbb{V}_g$ denote the spherical caps, see Fig. \ref{Spherical cap}.  We translate $u_f$ to the origin to express the region $\mathbb{V}_f$ in spherical coordinates as 
	\begin{equation}\label{cap} 
	\mathbb{V}_f=\left\{ (r,\theta,\varphi_1,\dots,\varphi_{d-2}) : \frac{\cos\theta_f}{\cos \theta}r_f\le r\le r_f,\ 0\le \theta\le \theta_f,\ 0\le \varphi_1,\dots,\varphi_{d-3}\le \pi,\  0\le \varphi_{d-2}< 2\pi \right\},
	\end{equation}
	where $\theta_f$ is strictly less than $\pi/2$. 
 
	 \begin{figure}[!h]
	\begin{center}
		\includegraphics[scale=0.5]{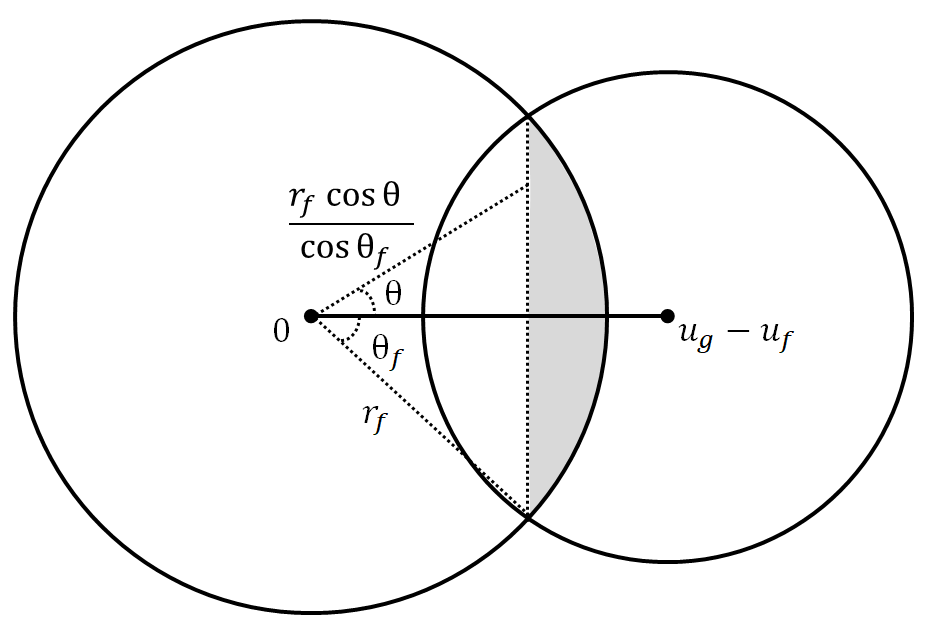}
		\caption{
			Illustration of the spherical cap $\mathbb{V}_f$
		}
 		\label{Spherical cap}
	\end{center}
\end{figure}

	We then have
	\begin{align*}
	|\mathbb{V}_f|&=\int_{0}^{2\pi}\int_{0}^{\pi}\cdots\int_{0}^{\pi}\int_0^{\theta_f}\int_{\frac{\cos\theta_f }{\cos\theta}r_f}^{r_f}r^{d-1}(\sin^{d-2}\theta)( \sin^{d-3}\varphi_{1})\cdots (\sin\varphi_{d-3}) \,drd\theta d\varphi_1\cdots d\varphi_{d-2}\cr
	&=|\mathbb{S}_{d-2}| \int_0^{\theta_f}\int_{\frac{\cos\theta_f }{\cos\theta}r_f}^{r_f}r^{d-1}(\sin^{d-2}\theta)\,drd\theta.
	\end{align*}
	Further calcuation gives
	\begin{align}\label{Vf1}\begin{split}
	|\mathbb{V}_f|&=\frac{|\mathbb{S}_{d-2}|}{d}  r_f^d \int_0^{\theta_f}\left(1-\frac{\cos^d\theta_f}{\cos^d\theta}\right)(\sin^{d-2}\theta)\,d\theta\cr
	&=\frac{|\mathbb{S}_{d-2}|}{|\mathbb{S}_{d-1}|}\rho_f\left(-\frac{1}{d-1}(\sin^{d-1}\theta_f)\cos\theta_f +\int_0^{\theta_f}\sin^{d-2}\theta\,d\theta\right).
	\end{split} \end{align}
	Using the following relations
	$$
	\int \sin^n x\,dx=-\frac 1n(\sin^{n-1}x)\cos x+\frac{n-1}{n}\int \sin^{n-2}x\,dx\quad \mbox{and} \quad |\mathbb{S}_{d+1}| =\frac{2\pi}{d}|\mathbb{S}_{d-1}|,
	$$
	we find
	\begin{align}\label{even}\begin{split}
	&\frac{|\mathbb{S}_{d-2}|}{|\mathbb{S}_{d-1}|}\rho_f   \int_0^{\theta_f}\sin^{d-2}\theta\,d\theta \cr
	&\quad =\frac{|\mathbb{S}_{d-2}|}{|\mathbb{S}_{d-1}|}\rho_f\left(-\sum_{i=1}^{\frac{d-2}{2}}\frac{\prod_{j=1}^{i-1}(d-(2j+1))}{\prod_{j=1}^{i}(d-2j)} (\sin^{d-(2i+1)}\theta_f)\cos \theta_f+\frac{d-3}{d-2 }\frac{d-5}{d-4 }\cdots\frac{1}{2}\int_0^{\theta_f}\,d\theta\right)\cr
	&\quad =-\frac{|\mathbb{S}_{d-2}|}{|\mathbb{S}_{d-1}|}\rho_f\sum_{i=1}^{\frac{d-2}{2}}\frac{\prod_{j=1}^{i-1}(d-(2j+1))}{\prod_{j=1}^{i}(d-2j)} (\sin^{d-(2i+1)}\theta_f)\cos \theta_f+\frac{|\mathbb{S}_{0}|}{|\mathbb{S}_{1}|}\rho_f\theta_f\cr
	&\quad =-\frac{|\mathbb{S}_{d-2}|}{|\mathbb{S}_{d-1}|}\rho_f\sum_{i=1}^{\frac{d-2}{2}}\frac{\prod_{j=1}^{i-1}(d-(2j+1))}{\prod_{j=1}^{i}(d-2j)} (\sin^{d-(2i+1)}\theta_f)\cos \theta_f+\frac{1}{2}\rho_f -\frac{1}{\pi}\rho_f\arcsin\left(\frac{r_f^2+U_{f,g}^2-r_g^2}{2r_f U_{f,g}}\right)
	\end{split}\end{align} 
	for even number $d\ge2$. In the last line, we used
	\begin{equation}\label{arc}
	\theta_f=\arccos \left(\frac{r_f^2+U_{f,g}^2-r_g^2}{2r_f U_{f,g}}\right)=\frac{\pi}{2}-\arcsin \left(\frac{r_f^2+U_{f,g}^2-r_g^2}{2r_f U_{f,g}}\right).
	\end{equation}
	Since $\theta_f$ is strictly less than $\pi/2$, it follows from \eqref{Vf1} and \eqref{even} that
	\begin{align*} 
	\rho_f-2|\mathbb{V}_f|&=2\frac{|\mathbb{S}_{d-2}|}{|\mathbb{S}_{d-1}|}\rho_f\sum_{i=1}^{\frac{d-2}{2}}\frac{\prod_{j=1}^{i-1}(d-(2j+1))}{\prod_{j=1}^{i}(d-2j)} 	(\sin^{d-(2i+1)}\theta_f)\cos \theta_f+\frac{2}{\pi}\rho_f\arcsin\left(\frac{r_f^2+U_{f,g}^2-r_g^2}{2r_f U_{f,g}}\right)\cr
	&\le C_d\left(\rho_f \cos \theta_f+\rho_f\arcsin\left(\frac{r_f^2+U_{f,g}^2-r_g^2}{2r_f U_{f,g}}\right)\right)\cr
	&\le C\rho_f  \frac{r_f^2+U_{f,g}^2-r_g^2}{2r_f U_{f,g}},
	\end{align*}
	where we used the fact that there exists a positive constant $C$ satisfying
	\begin{equation}\label{sin_eq}
	\arcsin x\le Cx,\qquad \forall \, x\in [0,1].
	\end{equation}
	Applying the condition of $\sfD_2$ and \eqref{bounds}, we get
	\begin{align}\label{even2}
	\rho_f-2|\mathbb{V}_f|\le C\rho_f \left(  \frac{r_f^2-r_g^2}{2r_f U_{f,g}} +  \frac{U_{f,g}}{2r_f }\right) \le C\rho_f^{\frac{d-1}{d}} |u_f-u_g| \le C|u_f-u_g|.
	\end{align}
	In the case of odd number $d\ge2$, the same calculation as in \eqref{even} gives
	\begin{align} \begin{split}\label{odd0}
	&\frac{|\mathbb{S}_{d-2}|}{|\mathbb{S}_{d-1}|}\rho_f \int_0^{\theta_f}\sin^{d-2}\theta\,d\theta \cr
	&\quad =\frac{|\mathbb{S}_{d-2}|}{|\mathbb{S}_{d-1}|}\rho_f\left(-\sum_{i=1}^{\frac{d-3}{2}}\frac{\prod_{j=1}^{i-1}(d-(2j+1))}{\prod_{j=1}^{i}(d-2j)} (\sin^{d-	(2i+1)}\theta_f)\cos \theta_f+\frac{d-3}{d-2 }\frac{d-5}{d-4 }\cdots\frac{2}{3}\int_0^{\theta_f}\sin\theta\,d\theta\right)\cr
	&\quad =-\frac{|\mathbb{S}_{d-2}|}{|\mathbb{S}_{d-1}|}\rho_f \sum_{i=1}^{\frac{d-3}{2} }\frac{\prod_{j=1}^{i-1}(d-(2j+1))}{\prod_{j=1}^{i}(d-2j)} (\sin^{d-	(2i+1)}\theta_f)\cos \theta_f +\frac{|\mathbb{S}_{1}|}{|\mathbb{S}_{2}|}\rho_f\left(1-\cos\theta_f\right)\cr
	&\quad =-\frac{|\mathbb{S}_{d-2}|}{|\mathbb{S}_{d-1}|}\rho_f \sum_{i=1}^{\frac{d-3}{2} }\frac{\prod_{j=1}^{i-1}(d-(2j+1))}{\prod_{j=1}^{i}(d-2j)} (\sin^{d-(2i+1)}\theta_f)\cos \theta_f +\frac{1}{2}\rho_f -\frac{1}{2}\rho_f\cos\theta_f,
	\end{split}\end{align} 
	which also leads to
	\begin{align*}\begin{split}
	\rho_f-2|\mathbb{V}_f|&\le C\rho_f \left(  \frac{r_f^2-r_g^2+U_{f,g}^2}{2r_f U_{f,g}}  \right). 
	\end{split}\end{align*}
	The same holds true for $\mathbb{V}_g$, so \eqref{I_20} can be estimated as
	\begin{align}\label{I_2}
	II\le  C|u_f-u_g|.
	\end{align}
	
	\vspace{.2cm}
	
	$\bullet$ Estimate of $III$: In this case, we get
	\begin{equation*}
	III = \rho_f -\rho_g + 2 |\mathsf{supp}(M[g])\setminus \mathsf{supp}(M[f])|=:\rho_f -\rho_g + 2|\mathbb{B}_g\setminus \mathbb{B}_f|,
	\end{equation*}
	and if we translate $u_g$ to the origin, then the region $\mathbb{B}_g\setminus \mathbb{B}_f$ is expressed in spherical coordinates as
	\begin{equation}\label{wI3}
	\mathbb{B}_g\setminus \mathbb{B}_f=\left\{ (r,\theta,\varphi_1,\dots,\varphi_{d-2}) : \tilde{r}\le r\le r_g,\ 0\le \theta\le \tilde{\theta},\ 0\le \varphi_1,\dots,\varphi_{d-3}\le \pi,\  0\le \varphi_{d-2}< 2\pi \right\},
	\end{equation}
	where $\tilde{r}$ denotes
	$$
	\tilde{r}:=\sqrt{r_f^2-U_{f,g}^2(1-\cos^2\theta)}-U_{f,g}\cos\theta
	$$
	(see Fig \ref{Vf-Vg}).
		 \begin{figure}[!h]
		\begin{center}
			\includegraphics[scale=0.3]{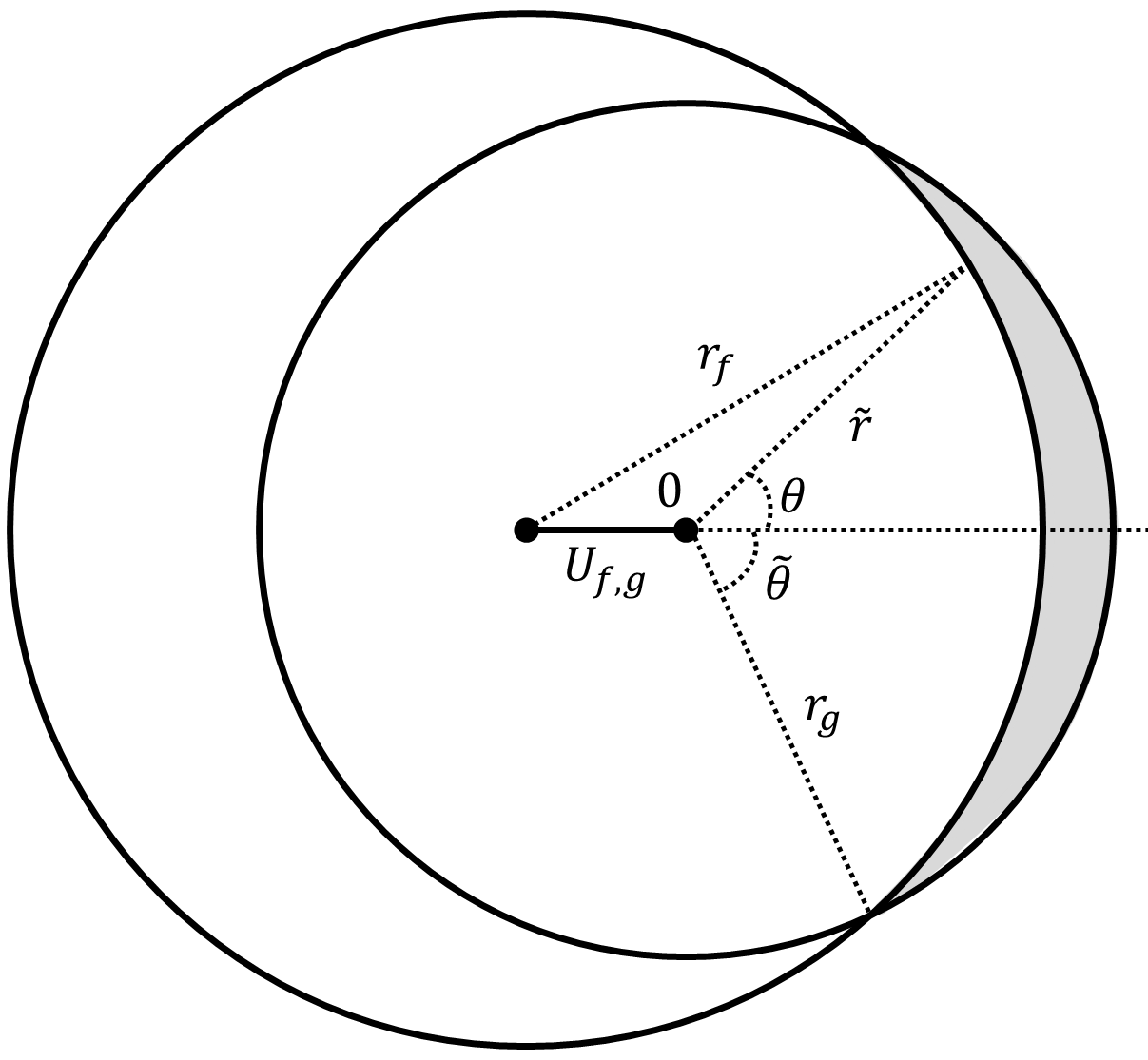}
			\caption{
				Illustration of the region $\mathbb{B}_g\setminus \mathbb{B}_f$
			}
			\label{Vf-Vg}
		\end{center}
	\end{figure} 	
	Since $\tilde{r}$ is increasing in $\theta \in [0,\frac\pi2]$, we see that
	\begin{align}\begin{split}\label{tilde r}
	\tilde{r}\ge r_f-U_{f,g}\ge 0,
	\end{split}\end{align}
	which gives
	\begin{align*}
	|\mathbb{B}_g\setminus \mathbb{B}_f|&=\frac{|\mathbb{S}_{d-2}|}{d}  \int_0^{\tilde{\theta}}\left(r_g^d-\tilde{r}^d\right)\sin^{d-2}\theta\,d\theta\cr
	&\le  C \left(r_g^d-(r_f-U_{f,g})^d\right) \int_0^{\tilde{\theta}}\sin^{d-2}\theta\,d\theta\cr
	&\le  C \left|r_g^d-r_f^d-\sum_{k=1}^{d}(-1)^k \ _dC_k r_f^{d-k} U_{f,g}^k\right|.
	\end{align*}
	Then, it follows from \eqref{bounds}  that
	\begin{align*}
	|\mathbb{B}_g\setminus \mathbb{B}_f|&\le  C \left|r_g^d-r_f^d\right|+C\sum_{k=1}^{d} r_f^{d-k} U_{f,g}^k\cr
	&\le  C \left(\left|\rho_f-\rho_g\right| +|u_f-u_g|\right),
	\end{align*}
	which implies
	\begin{align*}
	III\le C\left(|\rho_f-\rho_g|+|u_f-u_g|\right).
	\end{align*}
	This, combined with \eqref{I_1.}, \eqref{I_4.}  and \eqref{I_2}, gives the desired result.

\vspace{.2cm}

\noindent {\bf ($L^1$-estimate of $|v|^2(M[f] - M[g])$)} Here, we denote 
	\[
	\sum_{i=1}^4\int_{\sfD_i} |v|^2\left|M[f]-M[g]\right| dv =: \widetilde I + \widetilde{II} + \widetilde{III} + \widetilde{IV}.
	\]
	
	\vspace{.2cm}
	
	$\bullet$ Estimates of $\widetilde I$ and $\widetilde{IV}$: It follows from \eqref{moment_comp} that
	\begin{align*}\begin{split}
	\widetilde I &=	\rho_f |u_f|^2+\rho_g |u_g|^2+\frac{|\mathbb{S}_{d-1}|}{d+2}\left(\frac{d}{|\mathbb{S}_{d-1}|}\right)^{\frac{d+2}{d}} \left( \rho_f^{\frac{d+2}{d}} +\rho_g^{\frac{d+2}{d}}\right)\cr
	&\le ( \rho_f+\rho_g) |u_f|^2+\rho_g  |u_f+u_g||u_f-u_g|+C_d\left( \rho_f^{\frac{d+2}{d}} +\rho_g^{\frac{d+2}{d}}\right)\cr
	&\le C\left( |u_f|^2|u_f-u_g|^d+\rho_g  |u_f+u_g||u_f-u_g|+|u_f-u_g|^{d+2} \right),
	\end{split}	\end{align*}
	where we used the condition of $\sfD_1$:
	$$
	\rho_f^{\frac 1d}+\rho_g^{\frac 1d} \le C|u_f-u_g|.
	$$
	We also estimate 
	\begin{align*}\begin{split}
	 \widetilde{IV} &=\left|\rho_f |u_f|^2-\rho_g |u_g|^2+\frac{d}{d+2}|V_{d}|\left(\frac{d}{|S_{d-1}|}\right)^{\frac{d+2}{d}}\left( \rho_f^{\frac{d+2}{d}} -\rho_g^{\frac{d+2}{d}}\right)\right|\cr
	&\le  \left|\rho_f-\rho_g \right| |u_f|^2+\rho_g |u_f+u_g||u_f-u_g|+C (\rho_f+\rho_g)^{\frac 2d}\left|\rho_f-\rho_g\right|.
	\end{split}\end{align*}
	Using \eqref{bounds}, we obtain
	\begin{equation*}
	\widetilde I + \widetilde{IV} \le C\left(|\rho_f-\rho_g|+|u_f-u_g|\right).
	\end{equation*}
	
		\vspace{.2cm}
	
	$\bullet$ Estimates of $\widetilde{II}$: A straightforward computation gives
	\begin{align}\label{I_2 last}\begin{split}
	\widetilde{II} &=\int_{\mathbb{R}^d}|v|^2 M[f] \,dv+	\int_{\mathbb{R}^d}|v|^2  M[g] \,dv-2\int_{\mathbb{V}_f\cup \mathbb{V}_g}|v|^2  \,dv\cr
	&=	\rho_f |u_f|^2+\rho_g |u_g|^2+\frac{|\mathbb{S}_{d-1}|}{d+2}\left(\frac{d}{|\mathbb{S}_{d-1}|}\right)^{\frac{d+2}{d}}\left( \rho_f^{\frac{d+2}{d}} +\rho_g^{\frac{d+2}{d}}\right)-2\int_{\mathbb{V}_f\cup \mathbb{V}_g}|v|^2  \,dv.
	\end{split}\end{align}
	 If we translate $u_f$ to the origin, we get
	\begin{align*}\begin{split}
	\int_{\mathbb{V}_f}|v+u_f|^2\,dv&=|u_f|^2|\mathbb{V}_f|+2 \int_{\mathbb{V}_f}u_f\cdot v\,dv+\int_{\mathbb{V}_f}|v|^2\,dv,
	\end{split}\end{align*}
where $\mathbb{V}_f$ indicates the region \eqref{cap}. In this way, the last term of \eqref{I_2 last} can be decomposed into
	\begin{align}\label{V_f}\begin{split}
	-2\int_{\mathbb{V}_f\cup \mathbb{V}_g}|v|^2  \,dv&=-2\left(|u_f|^2|\mathbb{V}_f|+|u_g|^2|\mathbb{V}_g|\right)-4 \left(\int_{\mathbb{V}_f}u_f\cdot v\,dv+\int_{\mathbb{V}_g}u_g\cdot v\,dv\right)\cr
	&\quad -2\left(\int_{\mathbb{V}_f}|v|^2\,dv+\int_{\mathbb{V}_g}|v|^2\,dv\right)\cr
	&=: \widetilde{II}_1 + \widetilde{II}_2 + \widetilde{II}_3,
	\end{split}\end{align}
	where we use a similar argument as in \eqref{even2} to estimate
	\begin{align}\label{21}
	 \widetilde{II}_1&\le -( \rho_f|u_f|^2+\rho_f|u_f|^2) +C|u_f-u_g|.
	\end{align}
For the estimate of $\widetilde{II}_2$, we need to calculate $u_f\cdot v$ in spherical coordinates. Consider the rotation matrix:
	\begin{align*}
	R:=R_{v_{1}}(\theta_{1})R_{v_{2}}(\theta_{2})R_{v_{3}}(\theta_{3})\cdots R_{v_{d-1}}(\theta_{d-1}),
	\end{align*}
	where each  $R_{v_{i}}$ represents the counterclockwise rotation of the $v_i$-axis in direction to the $v_d$-axis:
	\begin{align*}
	R_{v_{i}}(\theta_{i})=\begin{bmatrix}
	a_{jk}
	\end{bmatrix}_{d\times d}
	\end{align*}
	with
	$$
	a_{ii}=\cos\theta_{i}, \qquad  a_{dd}=\cos\theta_{i}, \qquad a_{id}=-\sin\theta_{i}, \qquad a_{di}=\sin\theta_{i}, 
	$$
	$$
	\ a_{jj}=1\quad \text{if}\quad j\neq i,d, \qquad \ a_{jk}=0\quad \text{otherwise},
	$$
	and $\theta_{i}$ is the angle  between the vector $u_g-u_f$ and $v_d$-axis, which is described in Fig \ref{angle}. This gives
	$$
	\cos\theta_{i}=\frac{ u_{g_d}-u_{f_d} }{|u_g-u_f|}\quad \mbox{and} \quad \sin\theta_{i}=\frac{ u_{g_i}-u_{f_i} }{|u_g-u_f|}.
	$$
	Here $u_{g_k}-u_{f_k}$ denotes the $k$-th component of $u_g-u_f\in \mathbb{R}^d$, i.e. the matrix $R$ stands for the rotation of $u_g-u_f$ to the $v^d$-axis.
\begin{figure}[!h]
		\begin{center}
			\includegraphics[scale=0.6]{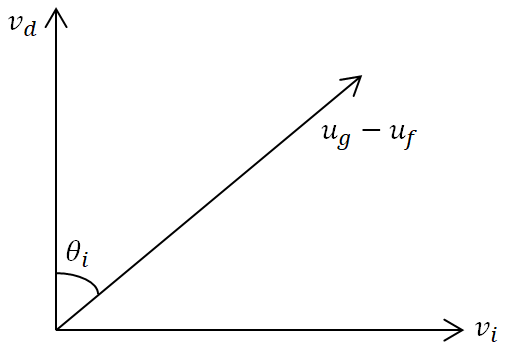}
			\caption{
Illustration of the angle $\theta_i$.
}
\label{angle}
		\end{center}
	\end{figure} 
	
 Note that the $d\times d$ matrix $R$ takes the form of
	\begin{equation}\label{rotation}
	\begin{bmatrix}
	*&*&*&\cdots &*&*\cr
		\vdots&\vdots&\vdots&\ddots &\vdots&\vdots\cr
		*&*&*&\cdots &*&*\\[3mm]		
	\sin \theta_{1} & \cos \theta_{1}\sin \theta_{2} & \cos \theta_{1}\cos \theta_{2}\sin \theta_{3}& \cdots & \displaystyle \left(\prod_{i=1}^{d-2}\cos \theta_{i}\right)\sin\theta_{d-1} & \displaystyle \prod_{i=1}^{d-1}\cos \theta_{i}
	\end{bmatrix}.
	\end{equation}
	Next, we apply the change of variables $p:=Rv$ to the region $\mathbb{V}_f$ in order to measure the angle $\theta$ from the $v_d$-axis. Since the inner product is invariant under rotations, we then have
	\begin{align*} \begin{split}
	\int_{\mathbb{V}_f}u_f\cdot v\,dv&=\int_{\mathbb{V}_f}u_f\cdot (R^{-1}p)\left|\det R^{-1}\right|\,dp=(Ru_{f})\cdot\int_{\mathbb{V}_f} p \,dp.
	\end{split}\end{align*} 
	Applying the spherical coordinates \eqref{cap}:
	$$
	p=r\left(\prod_{i=1}^{d-2}(\sin\varphi_{i})\sin\theta,\prod_{i=1}^{d-3}(\sin\varphi_{i})(\cos\varphi_{d-2})\sin\theta, \dots, (\sin\varphi_1)(\cos\varphi_2) \sin\theta, (\cos\varphi_1) \sin\theta, \cos \theta\right)
	$$
	and using the following identities:
	$$
	\int_0^{2\pi}\sin\varphi_{d-2}\,d\varphi_{d-2}=0,\quad \int_0^{2\pi}\cos\varphi_{d-2}\,d\varphi_{d-2}=0,\quad \mbox{and} \quad \int_0^\pi (\sin^{d-2-k}\varphi_k) \cos\varphi_k\,d\varphi_k=0
	$$
	for all $k=1,\dots,d-3$, we deduce
	\begin{align*} \begin{split}
	&\int_{\mathbb{V}_f}u_f\cdot v\,dv\cr
	&\quad =(Ru_f)\cdot\int_{0}^{2\pi}\int_{0}^{\pi}\cdots\int_{0}^{\pi}\int_0^{\theta_f}\int_{\frac{\cos\theta_f }{\cos\theta}r_f}^{r_f}p\ r^{d-1}(\sin^{d-2}\theta)( \sin^{d-3}\varphi_{1})\cdots (\sin\varphi_{d-3}) \,drd\theta d\varphi_1\cdots d\varphi_{d-2}\cr
	&\quad =(Ru_f)\cdot\left(0,0,\cdots,0,1   \right)|\mathbb{S}_{d-2}|\int_0^{\theta_f}\int_{\frac{\cos\theta_f }{\cos\theta}r_f}^{r_f} r^{d}\cos\theta(\sin^{d-2}\theta)  \,drd\theta.
	\end{split}\end{align*} 
	This gives
	\begin{align}\label{wbad1}\begin{split}
	&\int_{\mathbb{V}_f}u_f\cdot v\,dv\cr
	&\quad =|\mathbb{S}_{d-2}|\left(-\sum_{i=1}^{d-1}u_{f_i}\sin\theta_i\prod_{j=1}^{i-1}\cos\theta_j+u_{f_d}\prod_{j=1}^{d-1}\cos\theta_j\right)\int_0^{\theta_f}\int_{\frac{\cos\theta_f }{\cos\theta}r_f}^{r_f} r^{d}\cos\theta(\sin^{d-2}\theta)  \,drd\theta
	\end{split}\end{align} 
	thanks to \eqref{rotation}.
	By a direct calculation as in \eqref{Vf1}, we get
	\begin{align*}
	\int_0^{\theta_f}\int_{\frac{\cos\theta_f }{\cos\theta}r_f}^{r_f} r^{d}\cos\theta\sin^{d-2}\theta  \,drd\theta&=\frac{r_f^{d+1}}{d+1}\int_0^{\theta_f} \left\{1-\left(\frac{\cos\theta_f}{\cos\theta}\right)^{d+1}\right\}\cos\theta(\sin^{d-2}\theta)  \,d\theta\cr
	&=\frac{r_f^{d+1}}{d+1}\left\{\int_0^{\theta_f}\cos\theta(\sin^{d-2}\theta)  \,d\theta-\frac{1}{d-1}(\sin^{d-1}\theta_f)\cos^2\theta_f\right\} \cr
	&=\frac{r_f^{d+1}}{d+1}\frac{\sin^{d-1}\theta_f}{d-1}\left\{1-\cos^2\theta_f\right\} .
	\end{align*} 
	This together with \eqref{wbad1} yields
	\begin{align}\label{wbad2}\begin{split}
	\int_{\mathbb{V}_f}u_f\cdot v\,dv&=\frac{|\mathbb{S}_{d-2}|}{(d-1)(d+1)}r_f^{d+1}(\sin^{d-1}\theta_f)\left(-\sum_{i=1}^{d-1}u_{f_i}\sin\theta_i\prod_{j=1}^{i-1}\cos\theta_j+u_{f_d}\prod_{j=1}^{d-1}\cos\theta_j\right)\cr
	&-\frac{|\mathbb{S}_{d-2}|}{(d-1)(d+1)}r_f^{d+1}(\sin^{d-1}\theta_f)\cos^2\theta_f\left(-\sum_{i=1}^{d-1}u_{f_i}\sin\theta_i\prod_{j=1}^{i-1}\cos\theta_j+u_{f_d}\prod_{j=1}^{d-1}\cos\theta_j\right).
	\end{split}\end{align} 
	In the case of $\mathbb{V}_g$, we translate $u_g$ to the origin, so the region $\mathbb{V}_g$ is positioned in the opposite direction to the region $\mathbb{V}_f$ with respect to the origin. Applying the rotation \eqref{rotation} to $\mathbb{V}_g$, the angle $\theta$ ranges from $\pi-\theta_g$ to $\pi$. Thus we have 
	\begin{align*}
	&\int_{\mathbb{V}_g}u_g\cdot v\,dv\cr
	&\quad =|\mathbb{S}_{d-2}|\left(-\sum_{i=1}^{d-1}u_{g_i}\sin\theta_i\prod_{j=1}^{i-1}\cos\theta_j+u_{g_d}\prod_{j=1}^{d-1}\cos\theta_j\right)\int_{\pi-\theta_g}^{\pi}\int_{\frac{\cos\theta_g }{\cos\theta}r_g}^{r_g} r^{d}\cos\theta(\sin^{d-2}\theta)  \,drd\theta\cr
	&\quad =|\mathbb{S}_{d-2}|\left(-\sum_{i=1}^{d-1}u_{g_i}\sin\theta_i\prod_{j=1}^{i-1}\cos\theta_j+u_{g_d}\prod_{j=1}^{d-1}\cos\theta_j\right)\frac{r_g^{d+1}}{d+1}\int_{\pi-\theta_g}^{\pi} \left\{1-\left(\frac{\cos\theta_g}{\cos\theta}\right)^{d+1}\right\}\cos\theta(\sin^{d-2}\theta)  \,d\theta\cr
	&\quad =-\frac{|\mathbb{S}_{d-2}|}{(d-1)(d+1)}r_g^{d+1}(\sin^{d-1}\theta_g)\left(-\sum_{i=1}^{d-1}u_{g_i}\sin\theta_i\prod_{j=1}^{i-1}\cos\theta_j+u_{g_d}\prod_{j=1}^{d-1}\cos\theta_j\right)\cr
	&\qquad+(-1)^{d+1}\frac{|\mathbb{S}_{d-2}|}{(d-1)(d+1)}r_g^{d+1}(\sin^{d-1}\theta_g)\cos^2\theta_g\left(-\sum_{i=1}^{d-1}u_{g_i}\sin\theta_i\prod_{j=1}^{i-1}\cos\theta_j+u_{g_d}\prod_{j=1}^{d-1}\cos\theta_j\right),
	\end{align*}
	which, combined with \eqref{wbad2} gives
	\begin{align*}\begin{split}
	\widetilde{II}_2 &=-4 \int_{\mathbb{V}_f}u_f\cdot v\,dv-4\int_{\mathbb{V}_g}u_g\cdot v\,dv\cr
	&=-\frac{|\mathbb{S}_{d-2}|}{(d-1)(d+1)}\biggl\{r_f^{d+1}\sin^{d-1}\theta_f\biggl(-\sum_{i=1}^{d-1}u_{f_i}\sin\theta_i\prod_{j=1}^{i-1}\cos\theta_j+u_{f_d}\prod_{j=1}^{d-1}\cos\theta_j\biggl)\cr
	&\quad -r_g^{d+1}\sin^{d-1}\theta_g\biggl(-\sum_{i=1}^{d-1}u_{g_i}\sin\theta_i\prod_{j=1}^{i-1}\cos\theta_j+u_{g_d}\prod_{j=1}^{d-1}\cos\theta_j\biggl)\biggl\}\cr
	&\quad +\frac{|\mathbb{S}_{d-2}|}{(d-1)(d+1)}\biggl\{r_f^{d+1}\sin^{d-1}\theta_f\cos^2\theta_f\biggl(-\sum_{i=1}^{d-1}u_{f_i}\sin\theta_i\prod_{j=1}^{i-1}\cos\theta_j+u_{f_d}\prod_{j=1}^{d-1}\cos\theta_j\biggl)\cr
	&\quad -(-1)^{d+1}r_g^{d+1}\sin^{d-1}\theta_g\cos^2\theta_g\biggl(-\sum_{i=1}^{d-1}u_{g_i}\sin\theta_i\prod_{j=1}^{i-1}\cos\theta_j+u_{g_d}\prod_{j=1}^{d-1}\cos\theta_j\biggl)\biggl\}\cr
	&=: \widetilde{II}_{21} + \widetilde{II}_{22}.
	\end{split}\end{align*}
	To estimate $\widetilde{II}_{21}$, we observe from \eqref{bounds}  that
	$$
	\left|r_f^{d+1}-r_g^{d+1}\right|+  \left|u_{f_k}-u_{g_k}\right|\le C\left(|\rho_f-\rho_g|+|u_f-u_g|\right).
	$$
	Moreover, it follows from \eqref{arc} and the condition of $\sfD_2$ that
	\begin{align*}
	\left|(\sin^{d-1}\theta_f)-(\sin^{d-1}\theta_g)\right|&\le C\left|\theta_f-\theta_g\right| \le C|u_f-u_g|,
	\end{align*}
	due to \eqref{sin_eq}.	From those observations, we see that applying the triangle inequality to $\widetilde{II}_{21}$ provides
	\begin{align*}
	\widetilde{II}_{21} \le C\left(|\rho_f-\rho_g|+|u_f-u_g|\right).
	\end{align*}
	Similarly, $\widetilde{II}_{22}$ can be estimated as
	\begin{align*}
\widetilde{II}_{22}&\le C \left(\cos\theta_f+\cos\theta_g \right)\le C\left(|\rho_f-\rho_g|+|u_f-u_g|\right),
	\end{align*}
	and hence we conclude that
	\begin{equation}\label{22}
	 \widetilde{II}_{2} \le C\left(|\rho_f-\rho_g|+|u_f-u_g|\right).
	\end{equation}

We now estimate $\widetilde{II}_{23}$. By using similar argument as in \eqref{Vf1}, we find
	\begin{align*}\begin{split}
	&\int_{\mathbb{V}_f}|v|^2\,dv\cr
	&\quad =\int_{0}^{2\pi}\int_{0}^{\pi}\cdots\int_{0}^{\pi}\int_0^{\theta_f}\int_{\frac{\cos\theta_f }{\cos\theta}r_f}^{r_f}r^{d+1}(\sin^{d-2}\theta) (\sin^{d-3}\varphi_{1})\cdots (\sin\varphi_{d-3}) \,drd\theta d\varphi_1\cdots d\varphi_{d-2}\cr
	&\quad =\frac{|\mathbb{S}_{d-2}|}{d+2} r_f^{d+2} \int_0^{\theta_f}\left(1-\frac{\cos^{d+2}\theta_f}{\cos^{d+2}\theta}\right)\sin^{d-2}\theta\,d\theta\cr
	&\quad = \frac{|\mathbb{S}_{d-2}|}{d+2} \left(\frac{d}{|\mathbb{S}_{d-1}|}\right)^{d+2}\rho_f^{\frac{d+2}{d}}\left(-\frac{1}{d+1}(\sin^{d+1}\theta_f)\cos\theta_f-\frac{1}{d-1}(\sin^{d-1}\theta_f)\cos^3\theta_f+ \int_0^{\theta_f}\sin^{d-2}\theta\,d\theta\right).
	\end{split} \end{align*}
	Recall from \eqref{even} and \eqref{odd0} that the last term is calculated as
	\begin{align*}
	&\frac{|\mathbb{S}_{d-2}|}{|\mathbb{S}_{d-1}|}\int_0^{\theta_f}\sin^{d-2}\theta\,d\theta\cr
	&\quad =\frac{1}{2}-\frac{|\mathbb{S}_{d-2}|}{|\mathbb{S}_{d-1}|}\sum_{i=1}^{\frac{d-2}{2}}\frac{\prod_{j=1}^{i-1}(d-(2j+1))}{\prod_{j=1}^{i}(d-2j)} (\sin^{d-(2i+1)}\theta_f)\cos \theta_f -\frac{1}{\pi}\arcsin\left(\frac{r_f^2+U_{f,g}^2-r_g^2}{2r_f U_{f,g}}\right)
	\end{align*} 
	for even number $d\ge 2$, and
	\begin{align*}
	\frac{|\mathbb{S}_{d-2}|}{|\mathbb{S}_{d-1}|}\int_0^{\theta_f}\sin^{d-2}\theta\,d\theta=\frac{1}{2}-\frac{|\mathbb{S}_{d-2}|}{|\mathbb{S}_{d-1}|} \sum_{i=1}^{\frac{d-3}{2} }\frac{\prod_{j=1}^{i-1}(d-(2j+1))}{\prod_{j=1}^{i}(d-2j)} (\sin^{d-(2i+1)}\theta_f) \cos \theta_f  -\frac{1}{2}\cos\theta_f
	\end{align*} 
	for odd number $d\ge 2$.  Since in the case of $\sfD_2$, we proved that
	$$
	 \left|\cos\theta_f\right| +\left|\arcsin\left(\frac{r_f^2+U_{f,g}^2-r_g^2}{2r_f U_{f,g}}\right)\right| \le C\left(|\rho_f-\rho_g|+|u_f-u_g|\right),
	$$
	we obtain
	\begin{align}\label{23}\begin{split}
	 \widetilde{II}_{23} &= -2\int_{\mathbb{V}_f}|v|^2\,dv-2 \int_{\mathbb{V}_g}|v|^2\,dv\cr
	&\le - \frac{|\mathbb{S}_{d-1}|}{d+2} \left(\frac{d}{|\mathbb{S}_{d-1}|}\right)^{d+2}\left(\rho_f^{\frac{d+2}{d}}+\rho_g^{\frac{d+2}{d}}\right)+C\left(|\rho_f-\rho_g|+|u_f-u_g|\right).
	\end{split}\end{align} 
	Finally, we go back to \eqref{I_2 last} with \eqref{V_f}, \eqref{21}, \eqref{22}, and \eqref{23} to conclude that
	\begin{align*}
	\widetilde{II} \le C\left(|\rho_f-\rho_g|+|u_f-u_g|\right).
	\end{align*}
	Now it only remains to estimate $ \widetilde{III}$:
	\begin{align*}
	\widetilde{III}=\intr|v|^2 M[f]\,dv-\intr|v|^2 M[g]\,dv +2\int_{\mathbb{B}_g\setminus \mathbb{B}_f}|v|^2\,dv.
	\end{align*}
	By \eqref{moment_comp}, we get
	\begin{align*}
	\intr|v|^2 M[f]\,dv-\intr|v|^2 M[g]\,dv&\le\rho_f|u_f|^2-\rho_g|u_g|^2+C  \left|\rho_f^{\frac{d+2}{d}}-\rho_g^{\frac{d+2}{d}}\right|\cr
	&\le C\left(|\rho_f-\rho_g|+|u_f-u_g|\right).
	\end{align*}
	Also, it follows from \eqref{bounds}, \eqref{wI3} and \eqref{tilde r} that
	\begin{align*}
	2\int_{\mathbb{B}_g\setminus \mathbb{B}_f}|v|^2\,dv&=2\frac{|\mathbb{S}_{d-2}|}{d+2}  \int_0^{\tilde{\theta}}\left(r_g^{d+2}-\tilde{r}^{d+2}\right)\sin^{d-2}\theta\,d\theta\cr
	&\le  C \left(r_g^{d+2}-\left(r_f-U_{f,g}\right)^{d+2}\right) \int_0^{\tilde{\theta}}\sin^{d-2}\theta\,d\theta\cr
	&\le   C \left|r_g^d-r_f^d-\sum_{k=1}^{d+2}(-1)^k \ _dC_k r_f^{d+2-k} U_{f,g}^k\right| . 
	\end{align*}
	Thus, applying \eqref{bounds}  gives
	\begin{align*}
	\widetilde{III}&\le C\left(|\rho_f-\rho_g|+|u_f-u_g|\right).
	\end{align*}
	This concludes the desired result.
	

%
%
%
%

\section{Proof of Theorem \ref{main_thm}: $\gamma = \frac{d+2}d$ case}\label{sec_bcase}

In this section, we provide the details of the proof for Theorem \ref{main_thm} in the case $\gamma = \frac{d+2}d$. We only deal with the whole space case $\om =\R^d$ since the case of periodic spatial domain can be handled in almost the same manner.

\subsection{Regularized equation}\label{sec_reg}

For the existence of weak solutions to the equation \eqref{BGK}, we first consider the following regularized equation:
\bq\label{reg_eqn}
\pa_t f_\e + v \cdot \nabla_x f_\e = M^\e[f_\e] - f_\e,
\eq
subject to regularized initial data
\[
f_\e(x,v,t)|_{t=0} =: f_{\e,0}(x,v), \quad (x,v)\in \R^d \times \R^d,
\]
where
\bq\label{eq_bcase}
M^\e[f_\e]:= \mathbf{1}_{|u^\e_{f_\e}-v|^d\le c_d\rho^\e_{f_\e}}
\eq
with
\bq\label{reg_fcts}
\rho^\e_{f_\e} := \frac{\rho_{f_\e} * \theta^\e}{1 + \e^{d+1} \rho_{f_\e} * \theta^\e} \quad \mbox{and} \quad u^\e_{f_\e} := \frac{(\rho_{f_\e} u_{f_\e})*\theta^\e}{\rho_{f_\e} * \theta^\e + \e^{2d+1}(1 + |(\rho_{f_\e} u_{f_\e})*\theta^\e|^2)}.
\eq
Here  $\theta^\e:=\e^{-d}\theta(x/\e)$, where $ \theta \in \mc^\infty_c(\om)$ is the standard mollifier, so $\left\|\theta^\e\right\|_{L^1}= 1$. The regularized initial data $f_{\e,0}$ is defined by 
\bq\label{reg_ini}
f_{\e,0} =f_0*\varphi^\e+\e\frac{e^{-|v|^2}}{1+|x|^q},
\eq
where $\varphi^\e:=\theta^\e(x)\theta^\e(y)$ and the constant $q$ is strictly bigger than $d$.
 Throughout this paper, we assume the regularization parameter $\e \leq 1$. Apparently, we formally see that $M^\e[f]$ converges to $M[f]$ as $\e \to 0$. For the existence of weak solutions, we shall need to establish the existence theory for the regularized equation \eqref{reg_eqn}. 
\begin{proposition}\label{prop_reg} Let $T>0$ and the initial data $f_0$ satisfy the assumptions of Theorem \ref{main_thm}. Then there exists a weak solution $f_\e$ of the equations \eqref{reg_eqn}--\eqref{eq_bcase} with \eqref{reg_ini} in the sense of Definition \ref{def_weak}. Moreover, we have
	\[
	\sup_{0 \leq t \leq T}\|f_\e(\cdot,\cdot,t)\|_{L^1\cap L^\infty} \leq \|f_{\e,0}\|_{L^1 \cap L^\infty} +1, \qquad \rho_{f_\e}(x,t) \geq c_\e(1+|x|^q)^{-1},
	\]
	for some $c_\e > 0$, 	and
	\bq\label{kin_en}
	\intrr |v|^2 f_\e(x,v,t)\,dxdv + \int_0^t \intrr |v|^2\lt(f_\e - M[f_\e]\rt)\,dxdvds\leq \intrr |v|^2 f_{\e,0}(x,v)\,dxdv 
	\eq
	for $t \in [0,T]$. In particular, \eqref{kin_en} implies
	\[
\sup_{0\le t\le T}	\intrr |v|^2 f_\e(x,v,t)\,dxdv \leq \intrr |v|^2 f_{\e,0}(x,v)\,dxdv
	\]
	due to the minimization principle \eqref{minimization}.
\end{proposition}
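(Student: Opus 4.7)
The plan is to construct $f_\e$ as the limit of a Picard iteration. I would set $f_\e^0 := f_{\e,0}$ and define $f_\e^{n+1}$ as the solution of the linear transport--relaxation equation
\begin{equation*}
\pa_t f_\e^{n+1} + v\cdot\nabla_x f_\e^{n+1} + f_\e^{n+1} = M^\e[f_\e^n], \qquad f_\e^{n+1}|_{t=0}= f_{\e,0},
\end{equation*}
given explicitly by Duhamel's formula $f_\e^{n+1}(x,v,t) = e^{-t} f_{\e,0}(x-vt,v) + \int_0^t e^{-(t-s)} M^\e[f_\e^n](x-v(t-s),v,s)\,ds$. Since $0 \le M^\e[\,\cdot\,] \le 1$ pointwise and $\int M^\e[f]\,dxdv = \int \rho^\e_f\,dx \le \|f\|_{L^1}$ (using $\rho^\e \le \rho*\theta^\e$), a direct induction yields $\|f_\e^n\|_{L^\infty} \le \|f_{\e,0}\|_{L^\infty}+1$ and $\|f_\e^n\|_{L^1} \le \|f_{\e,0}\|_{L^1}$, together with a uniform second-moment bound obtained by multiplying the iteration equation by $|v|^2$ and invoking \eqref{moment_comp}.

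The heart of the argument is showing that $(f_\e^n)$ is Cauchy in $\mc^0([0,T];L^1_2(\R^d\times\R^d))$. The whole purpose of the regularization \eqref{reg_fcts} is to force $\rho^\e_{f_\e^n} \le \e^{-(d+1)}$ and $|u^\e_{f_\e^n}| \le \tfrac{1}{2}\e^{-(2d+1)}$ uniformly in $n$, so that assumption \eqref{bounds} of Lemma \ref{lipschitz}(i) holds with $\e$-dependent but $n$-independent constants. Coupled with elementary Lipschitz estimates for $f \mapsto \rho^\e_f$ and $f \mapsto u^\e_f$ (each a smooth bounded nonlinearity composed with convolution against $\theta^\e$), Lemma \ref{lipschitz} then yields $\|M^\e[f_\e^{n+1}] - M^\e[f_\e^n]\|_{L^1_2} \le C_\e \|f_\e^{n+1} - f_\e^n\|_{L^1_2}$. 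Since free transport preserves the $L^1_2$ norm, subtracting consecutive Duhamel formulas and applying Gronwall produces contractivity in $\mc^0([0,T];L^1_2)$; the resulting fixed point $f_\e$ is readily verified to be a weak solution in the sense of Definition \ref{def_weak}.

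The remaining properties would then be extracted as follows. The $L^1\cap L^\infty$ bound passes to the limit by lower semicontinuity. For the density lower bound, Duhamel gives $f_\e(x,v,t) \ge e^{-t} f_{\e,0}(x-vt,v) \ge \e e^{-T} e^{-|v|^2}(1+|x-vt|^q)^{-1}$; restricting the subsequent $v$-integration to $|v|\le 1$ gives $1+|x-vt|^q \le C_T(1+|x|^q)$, hence $\rho_{f_\e}(x,t) \ge c_\e(1+|x|^q)^{-1}$. Multiplying the regularized equation by $|v|^2$ and integrating produces the identity
\begin{equation*}
\intrr |v|^2 f_\e(t)\,dxdv + \int_0^t \intrr |v|^2 (f_\e - M^\e[f_\e])\,dxdvds = \intrr |v|^2 f_{\e,0}\,dxdv,
\end{equation*}
and \eqref{kin_en} reduces to the pointwise-in-$x$ inequality $\int |v|^2 M^\e[f_\e]\,dv \le \int |v|^2 M[f_\e]\,dv$; via \eqref{moment_comp}, the latter is a Jensen-type comparison between $(\rho^\e_{f_\e}, \rho^\e_{f_\e} u^\e_{f_\e})$ and $(\rho_{f_\e}, \rho_{f_\e} u_{f_\e})$ exploiting convexity of $\rho\mapsto \rho^{(d+2)/d}$ and $(\rho,m)\mapsto |m|^2/\rho$ together with the monotonicity of $a\mapsto a/(1+\e^{d+1}a)$.

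The principal obstacle will be quantifying the Cauchy step's $\e$-dependent Lipschitz constant: Lemma \ref{lipschitz} requires a priori bounds on $\rho^\e, u^\e$, but the nonlinear denominators in \eqref{reg_fcts} degrade the Lipschitz constants of $f\mapsto\rho^\e_f$ and $f\mapsto u^\e_f$ in $\|f\|_{L^1_2}$ by negative powers of $\e$. These constants must be tracked carefully so that the Gronwall argument closes in $n$-independent form.
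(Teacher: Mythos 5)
Your proposal follows essentially the same route as the paper's proof: the same Picard/Duhamel iteration, the same observation that the regularization \eqref{reg_fcts} gives $\e$-dependent but iteration-independent bounds on $\rho^\e$ and $u^\e$ so that Lemma~\ref{lipschitz}(i) applies, the same $L^1_2$-contraction and Gronwall argument, the same Duhamel lower bound for $\rho_{f_\e}$, and the same Jensen/Young/convexity comparison $\int|v|^2 M^\e[f_\e]\,dv\le\int|v|^2M[f_\e]\,dv$ to obtain \eqref{kin_en}. The one minor item the paper makes explicit that you gloss over is the uniform-in-$k$ weighted bound $\|f^k\|_{L^\infty_q}$ (Lemma~\ref{lem_uni}), which is what guarantees the pointwise control of $\rho_{f^k}*\theta^\e$ and $(\rho_{f^k}u_{f^k})*\theta^\e$ needed for the ``elementary'' Lipschitz estimates on $f\mapsto\rho^\e_f,u^\e_f$ and later for passing to the limit in the entropy inequality---but you correctly flag this as the step requiring the most care, so the proposal is sound.
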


\subsubsection{Regularized and linearized equation}

We construct the solution $f_\e$ to the regularized equation \eqref{reg_eqn} by considering the approximation sequence $f^k_\e$ given as solutions of the following equation:
\bq\label{app_weak_eq}
\pa_t f_\e^{k+1} + v\cdot\nabla_x f_\e^{k+1}   = M^\e[f_\e^k] - f_\e^{k+1},
\eq
with the initial data and first iteration step:
\[
f_\e^k(x,v,t)|_{t=0} = f_{\e,0}(x,v) \quad \mbox{for all } k \geq 1 \quad \mbox{and} \quad f_\e^0(x,v,t) = f_{\e,0}(x,v) \quad \mbox{for } (x,v,t) \in \R^d \times \R^d \times (0,T).
\]
The approximated equilibrium function $M^\e[f_\e^k]$ is given by
\[
M^\e[f_\e^k]:= \mathbf{1}_{|u^\e_{f^k_\e}-v|^d\le c_d\rho^\e_{f^k_\e}}.
\]
In the following, for the sake of notational simplicity, we omit $\e$-dependence in $f_\e^k$, i.e., $f^k = f_\e^k$. In order to study the convergence of approximations $f^k$, we introduce a weighted $L^p$-norm with $p\in[1,\infty)$:
\[
\left\|f\right\|_{L^p_q}:= \intrr \left(1+|v|^q\right) |f(x,v)|^p\,dxdv
\]
and for $p=\infty$, 
\[
\left\|f\right\|_{L^\infty_q}:= \esssup_{(x,v)\in\R^d\times\R^d} \left(1+|v|^q\right) |f(x,v)|.
\]
Naturally, $L^p_q(\R^d \times \R^d)$ with $p \in [1,\infty]$ denotes the space of functions with finite norms. 

For the regularized and linearized equation \eqref{app_weak_eq}, we show the global existence and uniqueness of solutions and uniform-in-$k$ bound estimates. 
\begin{lemma}\label{lem_uni} Let $T>0$ and $q > d+1$. Suppose that the initial data $f_0$ satisfies the assumptions of Theorem \ref{main_thm}. For any $k \in \N$, there exists a unique solution $f^k$ of the equation \eqref{app_weak_eq} such that $f^k \in L^\infty(0,T; L^\infty_q(\R^d \times \R^d))$. Moreover, we have
	\[
	\sup_{k \in \N}\sup_{0 \leq t \leq T}\|f^k(\cdot,\cdot,t)\|_{L^1 \cap L^\infty} \leq \|f_{\e,0}\|_{L^1\cap L^\infty}+1 , \quad \inf_{k \in \N}\rho_{f^k}(x,t) \geq c_\e(1+|x|^q)^{-1},
	\]
	and
	\[
	\sup_{k \in \N}\sup_{0 \leq t \leq T}\|f^k(\cdot,\cdot,t)\|_{L^\infty_q} \leq \|f_{\e,0}\|_{L^\infty_q} + C_\e
	\]
	for some $c_\e > 0$ and $C_\e > 0$ independent of $k$.
\end{lemma}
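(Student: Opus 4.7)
Since $M^\e[f^k]$ is an indicator function (hence $0 \le M^\e[f^k] \le 1$) depending only on the measurable data $f^k$, the iteration (\ref{app_weak_eq}) is a linear transport-relaxation equation for $f^{k+1}$ whose unique nonnegative mild solution is given by Duhamel's formula
\[
f^{k+1}(x,v,t) = e^{-t} f_{\e,0}(x-vt, v) + \int_0^t e^{-(t-s)} M^\e[f^k](x-v(t-s), v, s)\, ds.
\]
The plan is to propagate the four stated bounds by induction in $k$ using this representation; uniqueness in a reasonable class follows from the same formula.

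The $L^\infty$ bound is immediate: using $M^\e[f^k]\le 1$ gives $\|f^{k+1}(t)\|_{L^\infty}\le \|f_{\e,0}\|_{L^\infty}+1$. For the $L^1$ bound, I would integrate (\ref{app_weak_eq}) in $(x,v)$ and use
\[
\intr M^\e[f^k](x,v,t)\,dv = \rho^\e_{f^k}(x,t) \le (\rho_{f^k}*\theta^\e)(x,t),
\]
together with $\|\theta^\e\|_{L^1}=1$, to obtain $\int M^\e[f^k]\,dxdv \le \|f^k\|_{L^1}$; then a Duhamel inequality in $t$ propagates $\|f^k(t)\|_{L^1} \le \|f_{\e,0}\|_{L^1}$ inductively. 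For the density lower bound, I would discard the nonnegative integral in Duhamel and use the explicit form of the regularization \eqref{reg_ini}:
\[
f^{k+1}(x,v,t) \ge e^{-t}\, \e\, \frac{e^{-|v|^2}}{1+|x-vt|^q};
\]
integrating over $|v|\le 1$ and using $|x-vt|^q \le C_{q,T}(1+|x|^q)$ for $t\in[0,T]$ gives $\rho_{f^{k+1}}(x,t) \ge c_\e(1+|x|^q)^{-1}$ with $c_\e$ independent of $k$.

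For the weighted $L^\infty_q$ bound I would use the pointwise a priori estimates $\|\rho^\e_{f^k}\|_{L^\infty} \le \e^{-(d+1)}$ and $\|u^\e_{f^k}\|_{L^\infty} \le \tfrac{1}{2}\e^{-(2d+1)}$, which both follow directly from the algebraic form of \eqref{reg_fcts} via the elementary inequalities $s/(1+as)\le 1/a$ for $s\ge 0$ and $|y|/(1+|y|^2)\le 1/2$ for $y\in\R^d$. These imply that the $v$-support of $M^\e[f^k](x,\cdot,t)$ lies in a fixed ball of some $\e$-dependent radius $R_\e$, so $(1+|v|^q)M^\e[f^k]\le 1+R_\e^q =: C_\e$ pointwise in $(x,v,t)$. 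Plugging into Duhamel, and using that the weight $(1+|v|^q)$ depends only on $v$ while $f_{\e,0}$ is merely shifted in $x$, gives $\|f^{k+1}(t)\|_{L^\infty_q}\le \|f_{\e,0}\|_{L^\infty_q}+C_\e$. The main obstacle I anticipate is the density lower bound: unlike the other three estimates, it genuinely relies on the precise architecture of \eqref{reg_ini}, and its uniformity in $k$ is what will later enable the passage $\e \to 0$ through a compactness argument based on Lemma \ref{lipschitz}.
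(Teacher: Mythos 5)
Your proposal is correct and follows essentially the same route as the paper: Duhamel/mild formula along backward characteristics, $0\le M^\e[f^k]\le 1$ for the $L^\infty$ bound, the conservation-type inequality $\int M^\e[f^k]\,dv=\rho^\e_{f^k}\le\rho_{f^k}*\theta^\e$ for $L^1$, the explicit additive term $\e e^{-|v|^2}/(1+|x|^q)$ in $f_{\e,0}$ for the density lower bound, and the a priori pointwise bounds $\rho^\e_{f^k},|u^\e_{f^k}|\lesssim\e^{-(2d+1)}$ (you note a slightly sharper bound for $\rho^\e_{f^k}$, but this makes no difference) for the weighted $L^\infty_q$ estimate. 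The only cosmetic difference is that you integrate over $|v|\le 1$ for the lower bound while the paper integrates over all of $\R^d$ with the weight $(1+|v|^q)^{-1}$; both are valid.
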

\begin{proof} We first readily check the existence and uniqueness of solutions $f^k \in L^\infty(0,T; L^\infty_q(\R^d \times \R^d))$ to \eqref{app_weak_eq} by the standard existence theory for transport equations. Thus, we only provide the bound estimates on $\|f^k\|_{L^1\cap L^\infty}$, $\rho_{f^k}$, and $\|f^k\|_{L^\infty_q}$ for $q > d+1$.

\vspace{.2cm}

	$\bullet$ Estimate of $\|f^{k+1}\|_{L^1\cap L^\infty}$: We begin with the estimate of $\|f^{k+1}\|_{L^1}$. Since
	\[
	\intrr M^\e[f^k]\,dxdv = \intr \rho^\e_{f^k}\,dx \leq \intr \rho_{f^k} * \theta_\e\,dx \leq \intr \rho_{f^k}\,dx,
	\]
	we obtain
	\[
	\frac{d}{dt}\intrr f^{k+1}\,dxdv \leq \intrr f^k\,dxdv - \intrr f^{k+1}\,dxdv.
	\]
	This implies $\|f^k(t)\|_{L^1} \leq \|f_{\e,0}\|_{L^1}$ for all $k \geq 0$ and $t \in [0,T]$. 
	
	For the estimate of $L^\infty$-norm of $f^k$, we introduce the following backward characteristics: 
	\begin{align*}
	\frac{d}{ds} X(s) &= V(s),\qquad	\frac{d}{ds} V(s) = 0,
	\end{align*}
	with the terminal data: 
	\[
(X(t),V(t)) = (x,v).
	\]
	Along that characteristics, we have
	\bq\label{mild_f}
	f^{k+1}(x,v,t) = f_{\e,0}(x-vt,v)e^{-t} + \int_0^t e^{-(t-s)} M^\e[f^k](x - v(t-s),v,s)\,ds.
	\eq
	Since $0 \leq  M^\e[f^k] \leq 1$ and $f_0 \geq 0$, we get $f^{k+1} \geq 0$ and 
	\[
	f^{k+1}(x,v,t) \leq  f_{\e,0}(x-vt,v)e^{-t} + 1 - e^{-t}. 
	\]
	Thus $\|f^{k+1}(t)\|_{L^\infty} \leq  \|f_{\e,0}\|_{L^\infty} +1$. Combining this with the $L^1$-estimate gives the desired result.
	
	\vspace{.2cm}
	
	$\bullet$ Estimate of the lower bound on $\rho_{f^{k+1}}$: Since $M^\e \geq 0$, it follows from \eqref{mild_f} that
		\[
	\rho_{f^{k+1}} \geq e^{-T}\intr f_{\e,0}(x-vt,v)\,dv\ge e^{-T}\int_{\R^d} \e\frac{e^{-|v|^2}}{1+|x-vt|^q} \,dv\ge C\e\int_{\R^d} \frac{e^{-|v|^2}}{(1+|x|^q)(1+|v|^q)} \,dv, 
\]
which gives the desired result.
\vspace{.2cm}

	$\bullet$ Estimate of $\|f^{k+1}\|_{L^\infty_q}$: We use
	\bq\label{bdd_e}
	\rho^\e_{f^k}, \ |u^\e_{f^k}| \leq \frac1{\e^{2d+1}} \quad \mbox{for all } (x,t) \in \R^d \times (0,T)
	\eq
	to estimate 
	\[
	\left(1+|v|^q\right) M^\e[f^k](v) \leq 1+ |u^\e_{f^k}|^q + (c_d\rho^\e_{f^k})^\frac{q}{d} \leq C_\e
	\]
	for some $C_\e > 0$ independent of $k$. This yields
	\[
	\sup_{0 \leq t \leq T}\|f^{k+1}(\cdot,\cdot,t)\|_{L^\infty_q} \leq \|f_{\e,0}\|_{L^\infty_q} + C_\e,
	\]
	where $C_\e > 0$ is independent of $k$. This completes the proof.
\end{proof}

\subsubsection{Cauchy estimates}\label{ssec_cauchy}
In this part, we prove that $f^k$ is a Cauchy sequence in $L^\infty(0,T;L^1_2(\R^d \times \R^d))$. Straightforward computation gives
\begin{align*}
&\frac{d}{dt}\intrr (1+|v|^2)  |f^{k+1} - f^k|\,dxdv + \intrr (1+|v|^2)  |f^{k+1} - f^k|\,dxdv \cr
&\quad = \intrr (1+|v|^2) {\rm sgn}(f^{k+1} - f^k)(M^\e[f^k]- M^\e[f^{k-1}])\,dxdv,
\end{align*}
where {\rm sgn} denotes the sign function. A simple calculation gives 
	$$
	\int_{\R^d}	|\rho^\e_g-\rho^\e_h|+|u_g^\e-u_h^\e|\,dx\le  C_\e	\int_{\R^d}	|\rho_g-\rho_h|+|\rho_gu_g-\rho_hu_h|\,dx\le C_\e\|g-h\|_{L^1_2}.
	$$
Then we use Lemma \ref{lipschitz}  to estimate the right hand side of the above as
\[
\intrr \lal v \ral^2 {\rm sgn}(f^{k+1} - f^k)(M^\e[f^k] - M^\e[f^{k-1}])\,dxdv \leq C_\e \|f^k - f^{k-1}\|_{L^1_2}
\]
thanks to \eqref{bdd_e}. This shows
\[
\frac{d}{dt}\|f^{k+1} - f^k\|_{L^1_2}+\|f^{k+1} - f^k\|_{L^1_2} \leq C_\e \|f^k - f^{k-1}\|_{L^1_2},
\]
where $C_\e>0$ is independent of $k$, and subsequently, $f^k$ is a Cauchy sequence in $L^\infty(0,T;L^1_2(\R^d \times \R^d))$. Thus, for a fixed $\e>0$, there exists a limiting function $f$ such that 
\[
\sup_{0 \leq t \leq T}\|(f^k - f)(\cdot,\cdot,t)\|_{L^1_2} \to 0 \quad \mbox{as } n \to \infty.
\]
From the above, we also find that $f \in L^\infty(0,T; L^\infty_q(\R^d \times \R^d))$ and 
\[
\sup_{0 \leq t \leq T}\|f(\cdot,\cdot,t)\|_{L^\infty_q} \leq \|f_{\e,0}\|_{L^\infty_q} + C_\e
\]
for some $C_\e > 0$.

We next claim that $\rho^\e_{f^k}$ and $\rho^\e_{f^k}u^\e_{f^k}$ converge to $\rho^\e_{f}$ and $\rho^\e_{f}u^\e_{f}$ as $k \to \infty$ in $L^\infty(0,T; L^p)$ for any $p\in[1,\infty)$. Note that
\bq\label{conv_est0}
\sup_{0 \leq t \leq T}\lt(\|(\rho_{f^k} - \rho_f)(\cdot,t)\|_{L^1} + \|(\rho_{f^k}u_{f^k} - \rho_f u_f)(\cdot,t)\|_{L^1}\rt) \leq 2 \sup_{0 \leq t \leq T}\|(f^k - f)(\cdot,\cdot,t)\|_{L^1_2} \to 0
\eq
as $k \to \infty$. On the other hand, we see that
\[
\rho_{h}, \ |\rho_{h}u_{h}| \leq C\|h\|_{L^\infty_q}
\]
for $q > d+1$. Then this together with Lemma \ref{lem_uni} and \eqref{conv_est0} yields that for any $p \in [1,\infty)$
\begin{align*}
&\|(\rho^\e_{f^k} - \rho^\e_f)(\cdot,t)\|_{L^p} + \|(u^\e_{f^k} - u^\e_f)(\cdot,t)\|_{L^p} \cr
&\quad \leq C_\e\lt(\|(\rho_{f^k} - \rho_f)(\cdot,t)\|_{L^1} + \|(\rho_{f^k}u_{f^k} - \rho_f u_f)(\cdot,t)\|_{L^1} \rt),
\end{align*}
and thus
\bq\label{conv_05}
\sup_{0 \leq t \leq T}\lt(\|(\rho^\e_{f^k} - \rho^\e_f)(\cdot,t)\|_{L^p} + \|(u^\e_{f^k} - u^\e_f)(\cdot,t)\|_{L^p}\rt) \to 0
\eq
as $k \to \infty$ for any $p \in [1,\infty)$. 

\subsubsection{Proof of Proposition \ref{prop_reg}}\label{ssec_pro} We now show that the limiting function $f$ satisfies the equation \eqref{reg_eqn} in the sense of Definition \ref{def_weak}. Note that for any $\eta \in \mc^1_c(\R^d \times \R^d \times [0,T])$ with $\eta(x,v,T) = 0$, $f^{k+1}$ satisfies
\begin{align*}
&- \intrr f_{\e,0} \eta_0\,dxdv - \int_0^T \intrr f^{k+1} (\pa_t \eta + v \cdot \nabla_x \eta)\,dxdvdt \cr
&\quad = \int_0^T \intrr \lt(M^\e[f^k] - f^{k+1}\rt)\eta\,dxdvdt.
\end{align*}
Since the left hand side and the second term on the right hand side  of the above are linear, we only need to show that  
\bq\label{conv_est1}
\int_0^T \intrr M^\e[f^k] \eta\,dxdvdt \to \int_0^T \intrr M^\e[f] \eta\,dxdvdt \quad \mbox{as} \quad k \to \infty.
\eq
It follows from Lemma \ref{lipschitz} that
	\begin{align*}
	\int_0^T \intrr |M^\e[f^k]-M^\e[f] |\eta\,dxdvdt&\le  \|\eta\|_{L^\infty}\int_0^T\intrr|M^\e[f^k]-M^\e[f]|\,dxdvdt\cr
	&\le C\int_0^T\|\rho^{\e}_{f^k}-\rho^\e_f\|_{L^1}+\|u^{\e}_{f^k}-u^\e_f\|_{L^1}\,dt,
	\end{align*}
	where we used the fact that $\rho^{\e}_{(\cdot)}$ and $u^{\e}_{(\cdot)}$ are bounded from above by $1/\e^{2d+1}$. Thus the assertion \eqref{conv_est1} holds due to \eqref{conv_05}. Hence the existence of the weak solution $f$ to the equation \eqref{reg_eqn} is proved. 

It now remains to show that $f$ satisfies the kinetic energy estimate \eqref{kin_en}. Similarly as before, we get
\[
\intrr |v|^2 f^{k+1}(x,v,t)\,dxdv + \int_0^t \intrr |v|^2\lt(f^{k+1} - M^\e[f^k]\rt)\,dxdvds\leq \intrr |v|^2 f_{\e,0}(x,v)\,dxdv 
\]
for $t \in [0,T]$, and thus it suffices to show
\[
\int_0^t \intrr |v|^2M^\e[f^k]\,dxdvds \to \int_0^t \intrr |v|^2M^\e[f]\,dxdvds \quad \mbox{as} \quad k \to \infty. 
\]
Note that 
\[
\intr |v|^2(M^\e[f^k] - M^\e[f])\,dv = \rho^\e_{f^k}|u^\e_{f^k}|^2 - \rho^\e_{f}|u^\e_{f}|^2 + \frac{\left|\mathbb{S}_{d-1}\right|}{d+2}\left(\frac{d}{\left|\mathbb{S}_{d-1}\right|}\right)^{\frac{d+2}{d}}\left\{(\rho^\e_{f^k})^{1 + \frac2d} - (\rho^\e_{f})^{1 + \frac2d}\right\}.
\]
Since $\rho^\e_{(\cdot)}$and $|u^\e_{(\cdot)}|$ are bounded from above by $1/\e^{2d+1}$, we deduce
\[
\intrr |v|^2(M^\e[f^k] - M^\e[f])\,dxdv \leq C_\e\lt(\|\rho^\e_{f^k} - \rho^\e_f \|_{L^1} + \|u^\e_{f^k} - u^\e_{f}\|_{L^1}  \rt)
\]
for some $C_\e > 0$ independent of $k$. Thus, by \eqref{conv_05}, we conclude the desired result.

We now show the uniform-in-$\e$ bound estimates on $f_\e$ presented in Proposition \ref{prop_reg}. First it is clear from Lemma \ref{lem_uni} that 
\[
\sup_{0 \leq t \leq T}\|f_\e(\cdot,\cdot,t)\|_{L^1 \cap L^\infty} \leq \|f_{\e,0}\|_{L^1 \cap L^\infty} +1.
\]
For the uniform kinetic energy estimate, we observe that 
\[
\|\rho^\e_{f_\e}\|_{L^p} \leq \|\rho_{f_\e} * \theta^\e\|_{L^p} \leq \|\rho_{f_\e}\|_{L^p}
\]
for any $p \geq 1$, and
\[
\rho^\e_{f_\e} |u^\e_{f_\e}|^2 \leq \frac{|(\rho_{f_\e} u_{f_\e})*\theta^\e|^2}{\rho_{f_\e} * \theta^\e} \leq (\rho_{f_\e} |u_{f_\e}|^2)*\theta^\e,
\]
and thus
$$
\intr \rho^\e_{f_\e} |u^\e_{f_\e}|^2\,dx \leq \intr \rho_{f_\e} |u_{f_\e}|^2\,dx.
$$
This implies
\[
\intrr |v|^2 M^\e[f_\e]\,dxdv \leq \intrr |v|^2 M[f_\e]\,dxdv. 
\]
Hence, due to the minimization principle,
\[
\frac d{dt}\intrr |v|^2 f_\e\,dxdv = \intrr |v|^2 (M^\e[f_\e] - f_\e)\,dxdv \leq  \intrr |v|^2 (M[f_\e] - f_\e)\,dxdv \leq 0,
\]
which gives
\[
\intrr |v|^2 f_\e(x,v,t)\,dxdv \leq \intrr |v|^2 f_{\e,0}\,dxdv.
\]
This completes the proof.

%
%
%
%

\subsection{Proof of Theorem \ref{main_thm}: $\gamma = \frac{d+2}d$ case}\label{sec_main} We now pass to the limit $\e \to 0$ and show that $f:=\lim_{\e \to 0}f_\e$ satisfies the equation \eqref{BGK} in the sense of Definition \ref{def_weak} and the kinetic energy inequality \eqref{kin_ineq}.

We first present a lemma, showing some relationship between the local density and the kinetic energy, which will be used to estimate the interaction energy. Although its proof is by now classical (see \cite[Lemma 3.1]{GS99} for instance), for the reader's convenience, we provide the details of it. 
\begin{lemma}\label{lem_tech} Suppose $f \in L^1_+ \cap L^\infty(\R^d \times \R^d)$  and $|v|^2 f \in L^1(\R^d \times \R^d)$. Then there exists a constant $C>0$ such that
\[
\|\rho_f\|_{L^\frac{d+2}d} \leq C\left\|f\right\|_{L^\infty}^{ \frac{d+2}2} \lt(\intrr |v|^2 f\,dxdv\rt)^{\frac d{d+2}}
\]
and
\[
\|\rho_f u_f\|_{L^\frac{d+2}{d+1}} \leq C\left\|f\right\|_{L^\infty}^{d+2} \lt(\intrr |v|^2 f\,dxdv\rt)^{\frac {d+1}{d+2}}.
\]
\end{lemma}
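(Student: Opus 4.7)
\medskip

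\noindent\textbf{Proof proposal.} The plan is the classical interpolation argument that writes $\rho_f$ and $\rho_f u_f$ as moments of $f$ and optimally splits the velocity integration into a low-velocity region, where $f$ is controlled in $L^\infty$, and a high-velocity region, where the kinetic energy $|v|^2 f$ gives decay. No compactness or PDE structure enters; it is a pointwise-in-$x$ Hölder-type bound followed by integration.

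First, I would write, for any $R>0$,
\[
\rho_f(x) \;=\; \int_{|v|\le R} f(x,v)\,dv + \int_{|v|>R} f(x,v)\,dv \;\le\; C_d\,\|f\|_{L^\infty}\,R^d \,+\, \frac{1}{R^2}\,e(x),
\]
where $e(x):=\int_{\R^d}|v|^2 f(x,v)\,dv$. Optimizing in $R$ (taking $R^{d+2} \sim e(x)/\|f\|_{L^\infty}$) produces the pointwise estimate
\[
\rho_f(x) \;\le\; C\,\|f\|_{L^\infty}^{\,2/(d+2)}\,e(x)^{\,d/(d+2)}.
\]
Raising this inequality to the power $(d+2)/d$ and integrating in $x$ converts $e(x)^{1}$ into the full kinetic energy $\iint |v|^2 f\,dxdv$, yielding the first claim (with the exponent on $\|f\|_{L^\infty}$ coming directly from the optimization; any harmless power of $\|f\|_{L^\infty}$ can then be absorbed if needed to match the form stated).

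For the momentum bound, I would start from $|\rho_f u_f|(x) \le \int_{\R^d} |v|\,f(x,v)\,dv$ and run exactly the same splitting,
\[
\int_{\R^d}|v|\,f\,dv \;\le\; R\cdot C_d\|f\|_{L^\infty} R^d \,+\, \frac{1}{R}\,e(x) \;=\; C\|f\|_{L^\infty} R^{d+1}+R^{-1}e(x),
\]
optimize in $R$ to obtain the pointwise inequality
\[
|\rho_f u_f|(x) \;\le\; C\,\|f\|_{L^\infty}^{\,1/(d+2)}\,e(x)^{\,(d+1)/(d+2)},
\]
raise to the power $(d+2)/(d+1)$, and integrate in $x$ to finish.

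I do not anticipate a genuine obstacle: the only delicate point is carrying out the two optimizations correctly and checking that the resulting exponents on $e(x)$ are precisely $1$ after raising to $(d+2)/d$ and $(d+2)/(d+1)$ respectively, so that the $x$-integration produces the finite quantity $\iint |v|^2 f\,dxdv$. The positivity hypothesis $f\ge 0$ is used implicitly in writing $|\rho_f u_f|\le \int |v|f\,dv$; the $L^1$ hypothesis is needed only to ensure $\rho_f$ and $\rho_f u_f$ are well defined, while the $L^\infty$ and kinetic-energy hypotheses feed directly into the two pieces of the split. This is the argument recorded in \cite[Lemma 3.1]{GS99}.
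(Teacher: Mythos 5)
Your proof is correct and takes essentially the same route as the paper: the Chebyshev-type splitting of the velocity integral into $\{|v|\le R\}$ and $\{|v|>R\}$, the pointwise-in-$x$ choice $R=\bigl(e(x)/\|f\|_{L^\infty}\bigr)^{1/(d+2)}$, and then raising to the conjugate power and integrating in $x$. The only point of difference is worth flagging: the exponents on $\|f\|_{L^\infty}$ that actually fall out of the optimization are $\|f\|_{L^\infty}^{2/(d+2)}$ for the density and $\|f\|_{L^\infty}^{1/(d+2)}$ for the momentum, exactly as you compute; the exponents $\frac{d+2}{2}$ and $d+2$ written in the lemma (and in the paper's own proof) appear to be typos, since the same choice of $R$ cannot produce them. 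Your closing remark that a ``harmless power of $\|f\|_{L^\infty}$ can be absorbed to match the form stated'' is, strictly speaking, not a valid move when $\|f\|_{L^\infty}<1$ (raising the exponent there makes the claimed bound smaller, hence stronger). But you do not need that step: what you derived is the correct sharp bound, and it is what is actually used downstream in the paper, where $\|f_\e\|_{L^\infty}$ is uniformly controlled. I would simply drop that sentence and state the bound with the exponents you obtained.
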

\begin{proof} Note that for any $R>0$
\[
\rho_f = \intr f\,dv = \lt(\int_{|v| \geq R} + \int_{|v| \leq R} \rt) f\,dv \leq \frac1{R^2} \intr |v|^2 f\,dv + C\left\|f\right\|_{L^\infty} R^d.
\]
We now take $R = \lt(\intr |v|^2 f\,dv / \left\|f\right\|_{L^\infty} \rt)^{1/{d+2}}$ to obtain
\[
\rho_f \leq C\left\|f\right\|_{L^\infty}^{ \frac{d+2}2} \lt(\intr |v|^2 f\,dv \rt)^{\frac d{d+2}}.
\]
This implies
\[
\|\rho_f\|_{L^\frac{d+2}{d}} \leq C \left\|f\right\|_{L^\infty}^{ \frac{d+2}2}  \lt(\intrr |v|^2 f\,dxdv\rt)^{\frac d{d+2}}.
\]
Similarly, we also have the second assertion.
\end{proof}

Using the uniform bound estimates and Lemma \ref{lem_tech}, we obtain that there exists $f \in L^\infty (\R^d \times \R^d \times (0,T))$ such that
\[
f_\e \stackrel{\ast}{\rightharpoonup} f \quad \mbox{in } L^\infty (\R^d \times \R^d \times (0,T)), \quad \rho_{f_\e} \stackrel{\ast}{\rightharpoonup} \rho_f \quad \mbox{in } L^\infty (0,T; L^p(\R^d)) \quad \mbox{with } p \in \lt[1,\frac{d+2}{d}\rt],
\]
and
\[
 \rho_{f_\e} u_{f_\e} \stackrel{\ast}{\rightharpoonup} \rho_f u_f \quad \mbox{in } L^\infty (0,T; L^q(\R^d)) \quad \mbox{with } q \in \lt[1,\frac{d+2}{d+1}\rt]
\]
as $\e \to 0$.  On the other hand, we know $M^\e[f_\e] \in L^\infty(\R^d \times \R^d \times (0,T))$, and thus $M^\e[f_\e] \in L^p_{loc}(\R^d \times \R^d \times (0,T))$. Moreover, 
\begin{align}\label{sp_mom}
\begin{aligned}
&\frac{d}{dt}\intrr |x|^2 f_\e\,dxdv + \intrr |x|^2 f_\e\,dxdv \cr
&\quad = 2\intrr x \cdot v f_\e\,dxdv + \intrr |x|^2 M^\e[f_\e]\,dxdv\cr
&\quad \leq 3\intrr |x|^2 f_\e\,dxdv + \intrr |v|^2 f_\e\,dxdv + 2\e,
\end{aligned}
\end{align}
where we used
\[
\intrr |x|^2 M^\e[f_\e]\,dxdv = \intr |x|^2 \rho^\e_{f_\e}\,dx \leq \intr |x|^2 (\rho_{f_\e} * \theta^\e)\,dx \leq 2\intr |x|^2 \rho_{f_\e}\,dx + 2\e.
\]
Since the kinetic energy is uniformly bounded in $\e>0$, applying the Gr\"onwall's lemma to \eqref{sp_mom} gives
\[
\intrr |x|^2 f_\e\,dxdv < \infty
\]
uniformly in $\e>0$. 

We next recall the following velocity averaging lemma, whose proof can be found in \cite[Lemma 2.7]{KMT13}.
\begin{lemma}
Let $\{f^m\}$ be bounded in $L_{loc}^p(\R^d \times \R^d \times [0,T])$ with $1 < p < \infty$ and $\{G^m\}$ be bounded in $L_{loc}^p(\R^d \times \R^d \times [0,T])$. Suppose that
\[
\sup_{m \in \bbn} \|f^m\|_{L^\infty(\R^d \times \R^d \times (0,T))} + \sup_{m \in \bbn}\| (|v|^2 + |x|^2) f^m\|_{L^\infty(0,T;L^1(\R^d \times \R^d))}  <\infty.
\]
 If $f^m$ and $G^m$ satisfy
\[
\pa_t f^m + v \cdot \nabla_x f^m = \nabla_v^\ell G^m, \quad f^m|_{t=0} = f_0 \in L^p(\R^d \times \R^d),
\]
then for any $\eta(v)$ satisfying $|\eta(v)| \ls 1+ |v|$, the sequence
\[ \left\{ \int_{\R^d} f^m \eta \,dv \right\}_m \]
is relatively compact in $L^q(\R^d \times (0,T))$ for any $q \in \left(1, \frac{d+2}{d+1} \right)$.
\end{lemma}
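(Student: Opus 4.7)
The plan is to reduce the claim to a classical velocity averaging lemma applied on bounded velocity regions, and then to upgrade local compactness to global compactness using the spatial moment bound. Let me outline the main steps.

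First, I would introduce a smooth velocity cutoff $\chi_R \in \mc^\infty_c(\R^d)$ with $\chi_R \equiv 1$ on $|v| \le R$ and supported in $|v| \le 2R$, and split the average as
\[
\int_{\R^d} f^m \eta\,dv = \int_{\R^d} f^m (\eta \chi_R)\,dv + \int_{\R^d} f^m \eta (1-\chi_R)\,dv.
\]
The tail term is controlled by the linear growth $|\eta(v)| \lesssim 1+|v|$ combined with the uniform moment bound $\||v|^2 f^m\|_{L^\infty_t L^1_{x,v}} \le C$, which yields a bound of order $1/R$ in $L^\infty_t L^1_x$. Interpolating this with the uniform $L^\infty$-bound on $f^m$ in the spirit of Lemma \ref{lem_tech}, the tail term can be made arbitrarily small in $L^q(\R^d \times (0,T))$, uniformly in $m$, for any $q < \frac{d+2}{d+1}$.

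For the bulk part, $\eta \chi_R$ is a smooth compactly supported weight in $v$, so one may invoke the classical $L^p$ velocity averaging lemma for kinetic transport equations whose right-hand side contains velocity derivatives, such as the $L^p$ refinement by Perthame--Souganidis of the Golse--Lions--Perthame--Sentis lemma, which accommodates right-hand sides of the form $\nabla_v^\ell G^m$ by trading velocity regularity against the gain of fractional spatial regularity. Using the uniform $L^p_{loc}$-bounds on $f^m$ and $G^m$, this yields relative compactness of $\int f^m (\eta \chi_R)\,dv$ in $L^p_{loc}(\R^d \times (0,T))$, and then interpolation with the $L^\infty$-bound on $f^m$ upgrades this to relative compactness in $L^q_{loc}(\R^d \times (0,T))$ for any $q \in (1, \frac{d+2}{d+1})$. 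To pass from local to global, I would exploit the bound $\||x|^2 f^m\|_{L^\infty_t L^1_{x,v}} \le C$, which together with the linear growth of $\eta$ gives uniform spatial tightness of $\int f^m \eta\,dv$: the $L^q$-norm on $\{|x| \ge K\}$ tends to zero as $K \to \infty$ uniformly in $m$. A standard diagonal argument combining this tightness with $L^q_{loc}$-compactness on each ball $\{|x| \le K\}$ completes the proof.

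The main obstacle is the borderline nature of the endpoint $q = \frac{d+2}{d+1}$: since the linear growth of $\eta$ against $f^m$ produces, via the interpolation between $\|f^m\|_{L^\infty}$ and $\||v|^2 f^m\|_{L^1}$ captured by Lemma \ref{lem_tech}, a uniform bound in $L^\infty_t L^{\frac{d+2}{d+1}}_x$ rather than a gain of integrability, at the endpoint one only obtains weak (not strong) compactness of the averages. The strict inequality $q < \frac{d+2}{d+1}$ is therefore indispensable to convert the weak endpoint control into the strong $L^q$-compactness statement through interpolation with the strong local averaging result.
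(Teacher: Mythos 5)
The paper does not prove this lemma; it cites it verbatim from \cite[Lemma 2.7]{KMT13}, so there is no in-text proof to compare against. Your sketch --- truncate in velocity, apply a classical $L^p$ averaging lemma with $\nabla_v^\ell$-type source to the compactly supported part, control the high-velocity tail via $\||v|^2 f^m\|_{L^1}$ and the interpolation of Lemma~\ref{lem_tech}, obtain spatial tightness from $\||x|^2 f^m\|_{L^1}$, and use the strict inequality $q<\frac{d+2}{d+1}$ to make the tails small in $L^q$ uniformly in $m$ --- is exactly the structure of the argument in the cited reference, and the plan has no essential gap.
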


Then, by a direct application of the above lemma, we have the following convergences:
\bq\label{strong_comp}
\rho_{f_\e} \to \rho_f \quad  \mbox{in } L^\infty (0,T; L^p(\R^d)) \quad \mbox{and} \quad   \rho_{f_\e} u_{f_\e} \to \rho_f u_f \quad  \mbox{in } L^\infty (0,T; L^p(\R^d))
\eq
for $p \in (1,\frac{d+2}{d+1})$ as $\e \to 0$. On the other hand, it follows from \eqref{strong_comp} that
\bq\label{conv_ae0}
\|\rho_{f_\e} * \theta^\e - \rho_f\|_{L^1} \leq \|(\rho_{f_\e} - \rho_f)*\theta^\e\|_{L^1} + \|\rho_{f} * \theta^\e - \rho_f\|_{L^1} \to 0 \quad \mbox{as} \quad \e \to 0,
\eq
Similarly $\|(\rho_{f_\e} u_{f_\e}) * \theta^\e - \rho_f u_f\|_{L^1}$ as $\e \to 0$. Thus combining that with 
\[
\e^{d+1}|\rho_{f_\e} * \theta^\e| \leq C\e \quad \mbox{and} \quad \e^{2d+1}|(\rho_{f_\e}u_{f_\e}) * \theta^\e| \leq C\e
\]
deduces that 
\bq\label{conv_ae}
\rho^\e_{f_\e} \to \rho_f  \quad \mbox{and} \quad u^\e_{f_\e} \to u_f  \quad \mbox{a.e. on } E, 
\eq
where $E := \{(x,t)\in \R^d\times [0,T] : \rho_f(x,t) > 0\}$. 
 Indeed,
\begin{align*}
|\rho^\e_{f_\e} - \rho_f |&=\left|\frac{\rho_f*\theta^\e -(1+\e^{d+1}\rho_f*\theta^\e)\rho_f}{1+\e^{d+1}\rho_f*\theta^\e}\right|\cr
&=\left|\frac{\rho_f*\theta^\e-\rho_f -\e\rho_f(\rho_f*\e^d\theta^\e)}{1+\e(\rho_f*\e^d\theta^\e)}\right|\to 0\quad \mbox{a.e.}\quad \mbox{on}\quad E
\end{align*}
due to \eqref{conv_ae0}. We now show that the limiting function $f$ satisfies the weak formulation of  our main equation \eqref{BGK}, and again for this, it suffices to provide the following convergence:
 \begin{equation*} 
 \int_0^T \intrr M^\e[f_\e] \eta\,dxdvdt \to \int_0^T \intrr M[f] \eta\,dxdvdt \quad \mbox{as} \quad \e \to 0
\end{equation*} 
for all $\eta \in \mc_c^1(\R^d \times \R^d \times [0,T])$  with $\eta(x,v,T) = 0$. For this, we follow the idea of \cite{Per89}. 

Recall that 
$$
M^\e[f_\e]= \mathbf{1}_{|u^\e_{f_\e}-v|^d\le c_d\rho^\e_{f_\e}},\quad M[f]= \mathbf{1}_{|u^\e_{f_\e}-v|^d\le c_d\rho^\e_{f_\e}}.
$$
We deduce from \eqref{conv_ae} that for each $v$ belonging to the closure of $B(u_f, (c_d\rho_f)^{\frac 1d})$,
\begin{equation*} 
M^\e[f_\e](v) \to 1=M[f](v),\quad \mbox{and otherwise,}\quad  M^\e[f_\e](v) \to 0\quad\mbox{a.e. on }E\quad \mbox{as}\quad \e \to 0,
\end{equation*}
i.e. $M^\e[f_\e](v)$ converges to $M[f]$ a.e. on $E\times \mathbb{R}^d$. Moreover, we have from the $L^1$-bound of  $f^\K$ that for any $p\in(1,\infty)$,
\begin{align*}
\|M^\e[f_\e]\|_{L^p}^p&=\iint_{\mathbb{R}^d\times \mathbb{R}^d} M^\e[f_\e]\,dxdv= \int_{\mathbb{R}^d} \rho_{f_\e}\,dx <C.
\end{align*}
On the other hand, on $E^c \times \R^d$, we estimate
\begin{align*}
\lt|\lim_{\e \to 0}\iiint_{E^c \times \R^d} M^\e[f_\e] \eta \,dxdvdt \rt| &\leq \|\eta\|_{L^\infty} \lim_{\e \to 0}  \iiint_{E^c \times \R^d} M^\e[f_\e] \,dxdvdt \cr
&\leq\lim_{\e \to 0}  \iint_{E^c} \rho_{f_\e}\,dxdt\cr
&=\iint_{E^c} \rho_f \,dxdt \cr
&= 0.
\end{align*}
Thus one can conclude that  
\begin{align*}
\lim_{\e \to 0}\int_0^T\iint_{\R^d \times \R^d}  M^\e[f_\e] \eta\,dxdvdt &= \lim_{\e \to 0} \iiint_{E \times \R^d}  M^\e[f_\e] \eta\,dxdvdt + \lim_{\e \to 0} \iiint_{E^c \times \R^d}  M^\e[f_\e] \eta\,dxdvdt\cr
&= \iiint_{E \times \R^d}  M[f] \eta\,dxdvdt\cr
&= \int_0^T\iint_{\R^d \times \R^d}  M[f] \eta\,dxdvdt.
\end{align*}
This completes the proof.

%
%
%
%
%

\section{Proof of Theorem \ref{main_thm}: $\gamma \in (1,\frac{d+4}{d+2}]$ case  }\label{sec_4}

In this section, we provide the details of the proof of Theorem \ref{main_thm} in the case $\gamma \in (1,\frac{d+4}{d+2}]$ when  $d\geq 2$.  The proof can be directly applied to the one-dimensional case, i.e., $\gamma \in (1,3)$. Since the main idea of proof is essentially the same with the case $\gamma = \frac{d+2}d$, here we give a rather brief outline of the proof. As mentioned before, due to technical difficulties, we only consider the case of periodic spatial domain.

Similarly as in Section \ref{sec_bcase}, we regularize the equilibrium function $M[f]$ by employing the regularized macroscopic quantities $\rho^\e_f$ and $u^\e_f$ appeared in \eqref{reg_fcts}. 
Then our regularized equation of \eqref{BGK} reads as
\begin{align}\label{reg_eqn2}
\begin{aligned}
\pa_t f_\e + v \cdot \nabla_x f_\e = M^\e[f_\e] - f_\e, \quad \mbox{where} \quad  M^\e[f_\e] = c\left(\frac{2\gamma}{\gamma-1}(\rho^\e_{f_\e})^{\gamma-1}-|v-u^\e_{f_\e}|^2\right)^{n/2}_+,
\end{aligned}
\end{align}
subject to the regularized initial data $f_{\e,0}$ given as in \eqref{reg_ini}. 

Parallel to Section \ref{sec_bcase}, we first provide the global-in-time existence of weak solutions to the regularized  equation \eqref{reg_eqn2}.

\begin{proposition}\label{prop_reg2} Let $T>0$ and the initial data $f_0$ satisfy the assumptions of Theorem \ref{main_thm}. Then there exists a weak solution $f_\e$ of the equations \eqref{reg_eqn2} with \eqref{reg_ini} in the sense of Definition \ref{def_weak}. Moreover, we have
	\[
	\sup_{0 \leq t \leq T}\|f_\e(\cdot,\cdot,t)\|_{L^1} \leq \|f_{\e,0}\|_{L^1 }  \quad \mbox{and} \quad \rho_{f_\e}(x,t) \geq c_\e 
	\]
	for some $c_\e > 0$, 	and
	\begin{align}\label{kin_en2}\begin{split}
	&\inttr H(f_\e,v)\,dxdv + \int_0^t \inttr H(f_\e,v)-H(M[f_\e],v)\,dxdvds\leq \inttr H(f_{0,\e},v)\,dxdv 
\end{split}	\end{align}
	for $t \in [0,T]$. 
	In particular, this implies
	\begin{equation}\label{uniform bound}
\sup_{0\le t\le T}	\inttr \frac{|v|^2}{2} f_\e+\frac{1}{2c^{2/n}}\frac{( f_\e)^{1+2/n}}{1+2/n}\,dxdv \leq \inttr \frac{|v|^2}{2} f_{0,\e}+\frac{1}{2c^{2/n}}\frac{( f_{0,\e})^{1+2/n}}{1+2/n}\,dxdv
	\end{equation}
	due to the minimization principle \eqref{minimization}.
\end{proposition}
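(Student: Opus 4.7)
The strategy parallels Section \ref{sec_bcase} closely: construct iterates, establish $k$-uniform bounds, derive a Cauchy estimate in $L^1_2(\T^d\times\R^d)$, and pass to the limit. The modifications needed here arise from the power-type equilibrium $M^\e[f_\e] = c(\tfrac{2\gamma}{\gamma-1}(\rho^\e_{f_\e})^{\gamma-1} - |v-u^\e_{f_\e}|^2)_+^{n/2}$ replacing the indicator, and from the fact that the natural a priori control is $L^1\cap L^{1+2/n}$ rather than $L^1\cap L^\infty$. I would set up the iterates as solutions to the linear transport equation
\[
\partial_t f^{k+1} + v\cdot\nabla_x f^{k+1} + f^{k+1} = M^\e[f^k],\qquad f^{k+1}|_{t=0}=f_{\e,0},
\]
starting from $f^0\equiv f_{\e,0}$, and solve each step by Duhamel's formula along free-streaming characteristics.

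The first step is to obtain $k$-uniform bounds. Non-negativity and $\|f^{k+1}(t)\|_{L^1}\le\|f_{\e,0}\|_{L^1}$ follow from $\|M^\e[f^k]\|_{L^1}=\|\rho^\e_{f^k}\|_{L^1}\le\|\rho_{f^k}*\theta^\e\|_{L^1}\le\|f^k\|_{L^1}$ together with Duhamel. The regularization \eqref{reg_fcts} gives the $\e$-dependent but $k$-uniform upper bounds $\rho^\e_{f^k}\le\e^{-(d+1)}$ and $|u^\e_{f^k}|\le\e^{-(2d+1)}$, so $M^\e[f^k]$ is compactly supported in $v$ with $\|M^\e[f^k]\|_{L^\infty}\le C_\e$, yielding weighted bounds $\|f^{k+1}\|_{L^\infty_q}\le C_\e$ for any $q>0$. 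For the positive lower bound on the local density, Duhamel gives $f^{k+1}(x,v,t)\ge e^{-t}f_{\e,0}(x-vt,v)$; since $f_{\e,0}$ contains an additive perturbation of the form $\sim\e e^{-|v|^2}$, integration in $v$ yields $\rho_{f^{k+1}}(x,t)\ge c_\e>0$, uniformly in $k$, on the torus.

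With the $k$-uniform bounds $c_\e\le\rho_{f^k}\le C_\e$ and $|u_{f^k}|\le C_\e$ at hand, Lemma \ref{lipschitz}(ii) applies, and a calculation parallel to Section \ref{ssec_cauchy} (using also $\|\rho^\e_g-\rho^\e_h\|_{L^1}+\|u^\e_g-u^\e_h\|_{L^1}\le C_\e\|g-h\|_{L^1_2}$) gives
\[
\frac{d}{dt}\|f^{k+1}-f^k\|_{L^1_2} + \|f^{k+1}-f^k\|_{L^1_2} \le C_\e \|f^k - f^{k-1}\|_{L^1_2},
\]
whence $(f^k)$ is Cauchy in $C([0,T];L^1_2(\T^d\times\R^d))$ with limit $f_\e$. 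Strong convergence of $\rho^\e_{f^k}$ and $u^\e_{f^k}$ in every $L^p$, $p<\infty$, follows as in Section \ref{ssec_pro}; passing to the limit in $M^\e[f^k]\to M^\e[f_\e]$ is in fact easier than in Section \ref{sec_bcase}, since $M^\e$ depends continuously on its arguments. The $L^1$-bound, the lower bound $\rho_{f_\e}\ge c_\e$, and the weak formulation \eqref{reg_eqn2} all pass to the limit.

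Finally, for the entropy inequality \eqref{kin_en2} I would multiply \eqref{reg_eqn2} by $\partial_f H(f_\e,v)=\tfrac{|v|^2}{2}+\tfrac{1}{2c^{2/n}}f_\e^{2/n}$ and exploit the convexity of $H(\cdot,v)$ to obtain
\[
\frac{d}{dt}\inttr H(f_\e,v)\,dxdv \le \inttr \bigl(H(M^\e[f_\e],v)-H(f_\e,v)\bigr)\,dxdv.
\]
To replace $M^\e[f_\e]$ by $M[f_\e]$ on the right, I would use the identity $\int_{\R^d}H(M[g],v)\,dv=\eta(\rho_g,u_g)=\tfrac12\rho_g|u_g|^2+\tfrac{C_d}{\gamma-1}\rho_g^\gamma$ from the Introduction, combined with $(\rho*\theta^\e)^\gamma\le\rho^\gamma*\theta^\e$ (Jensen, since $\gamma>1$) and the convexity estimate $\rho^\e|u^\e|^2\le(\rho|u|^2)*\theta^\e$, exactly as at the end of Section \ref{ssec_pro}, to conclude $\int H(M^\e[f_\e],v)\le\int H(M[f_\e],v)$ after integration in $x$. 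The main obstacle I anticipate is the rigorous justification of the entropy identity itself at the iteration level: since $H$ is nonlinear in $f$ and the iterates are only transport solutions with $L^{1+2/n}$ data, one needs a mollification-in-$v$ or DiPerna--Lions renormalization argument, followed by two successive limits (in the auxiliary regularization and in $k$). The restriction $n\ge 2$, i.e.\ $\gamma\le\frac{d+4}{d+2}$, noted in Remark \ref{rmk_gap}, enters precisely through Lemma \ref{lipschitz}(ii), which is what makes the $L^1_2$ Cauchy estimate closeable.
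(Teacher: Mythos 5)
Your construction is exactly the paper's: the same iteration scheme, the $\e$-dependent bounds coming from \eqref{reg_fcts}, the strictly positive lower bound on $\rho_{f^{k}}$ from the additive part of $f_{\e,0}$ (which is what makes Lemma \ref{lipschitz}(ii) applicable), the $L^1_2$ Cauchy estimate, and the limit passage $k\to\infty$; all of that is correct. Where you genuinely deviate is the entropy inequality \eqref{kin_en2}: you multiply the limit equation by $\partial_f H(f_\e,v)$ and use convexity of $H(\cdot,v)$, then compare $M^\e$ with $M$ via Jensen and the minimization principle. The paper instead derives the entropy balance at the iteration level, where the equation is linear in $f^{k+1}$ with a given source: it treats the $|v|^2$-moment and the $L^{1+\frac2n}$-norm separately, absorbs the cross term $M^\e[f^k](f^{k+1})^{2/n}$ by Young's inequality (see \eqref{L gamma}), computes $\int_{\R^d}H(M^\e[f^k],v)\,dv=\tfrac12\rho^\e_{f^k}|u^\e_{f^k}|^2+\tfrac32(\rho^\e_{f^k})^\gamma$ explicitly as in \eqref{H in the middle}, uses \eqref{entropy gamma} together with \eqref{minimization}, and only then passes to the limit in $k$ using the strong convergence of the regularized macroscopic fields. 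Your limit-level argument is shorter and avoids that last limit passage, but it requires (i) the chain-rule/renormalization justification you flag (harmless here, since $f_\e$ is a Duhamel solution of free transport with a bounded, compactly supported in $v$ source), and (ii) a separate argument that $\inttr H(f_\e(t),v)\,dxdv$ is finite, i.e.\ control of the second velocity moment, which the paper obtains for free from the iteration-level estimate rather than from $L^\infty_q$ decay of $f_{\e,0}$. Note also that the paper's route simultaneously yields the $k$-uniform bound on $\|f^k\|_{L^1_2}+\|f^k\|_{L^{1+\frac2n}}$ stated in Lemma \ref{lem_uni2}; this is not needed for the Proposition as stated (indeed \eqref{uniform bound} follows from \eqref{kin_en2} and \eqref{minimization}, as you say), but it is what feeds the uniform $L^1\cap L^{1+\frac2n}$ bound asserted in Theorem \ref{main_thm}. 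I see no genuine gap in your proposal.
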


In order to prove Proposition \ref{prop_reg2}, we consider the approximation sequence $f^k_\e$ satisfying
\bq\label{app_weak_eq2}
\pa_t f_\e^{k+1} + v\cdot\nabla_x f_\e^{k+1}   = M^\e[f_\e^k] - f_\e^{k+1}, \quad \mbox{where} \quad  M^\e[f_\e^k] = c\left(\frac{2\gamma}{\gamma-1}(\rho^\e_{f_\e^k})^{\gamma-1}-|v-u^\e_{f_\e^k}|^2\right)^{n/2}_+
\eq
with the initial data and first iteration step:
\[
f_\e^k(x,v,t)|_{t=0} = f_{\e,0}(x,v) \quad \mbox{for all } k \geq 1 \quad \mbox{and} \quad f_\e^0(x,v,t) = f_{\e,0}(x,v) \quad \mbox{for } (x,v,t) \in \T^d \times \R^d \times (0,T).
\]

For notational simplicity, we often drop the subscript $\e$ and denote $f^k_\e$ by $f^k$ for instance.

\begin{lemma}\label{lem_uni2} Let $T>0$ and $q > d+1$. Suppose that the initial data $f_0$ satisfies the assumptions of Theorem \ref{main_thm}. For any $k \in \N$, there exists a unique solution $f^k$ of the equation \eqref{app_weak_eq2} such that $f^k \in L^\infty(0,T; L^\infty_q(\T^d \times \R^d))$. Moreover, we have
	\[
	\sup_{k \in \N}\sup_{0 \leq t \leq T}\left(\|  f^k(\cdot,\cdot,t)\|_{L^1_2}+\|f^k(\cdot,\cdot,t)\|_{L^{1+\frac 2n}}\right) \leq C , \qquad \inf_{k \in \N}\rho_{f^k}(x,t) \geq c_{\e} 
	\]
	and
	\[
	\sup_{k \in \N}\sup_{0 \leq t \leq T}\|f^k(\cdot,\cdot,t)\|_{L^\infty_q} \leq \|f_{\e,0}\|_{L^\infty_q} + C_\e,
	\]
	for some $c_\e > 0$ and $C_\e > 0$ independent of $k$.
\end{lemma}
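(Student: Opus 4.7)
The overall scheme follows that of Lemma \ref{lem_uni}: treat \eqref{app_weak_eq2} as a linear transport equation with source $M^\e[f^k]$ determined by the previous iterate, then bootstrap four uniform-in-$k$ bounds in sequence. Since the source is a prescribed function of $(x,v,t)$ once $f^k$ is fixed, the standard method of characteristics produces a unique solution $f^{k+1}\in L^\infty(0,T;L^\infty_q(\T^d\times\R^d))$ together with the mild representation
\[
f^{k+1}(x,v,t) = f_{\e,0}(x-vt,v)e^{-t} + \int_0^t e^{-(t-s)} M^\e[f^k](x-v(t-s),v,s)\,ds.
\]
The $L^1$ bound then follows from $\int M^\e[f^k]\,dv = \rho^\e_{f^k} \leq \rho_{f^k}*\theta^\e$ combined with an induction on $k$ in the ODE $\frac{d}{dt}\|f^{k+1}\|_{L^1} + \|f^{k+1}\|_{L^1} \leq \|f^k\|_{L^1}$ starting from $f^0=f_{\e,0}$. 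The pointwise lower bound $\rho_{f^{k+1}}(x,t)\geq c_\e$ comes from the mild formula, positivity of $M^\e$, and the strictly positive summand $\e e^{-|v|^2}/(1+|x|^q)$ of \eqref{reg_ini}, since on the torus the denominator is uniformly bounded after integration in $v$. The $L^\infty_q$ bound is obtained exactly as in Lemma \ref{lem_uni}: since \eqref{reg_fcts} forces $\rho^\e_{f^k}, |u^\e_{f^k}|\leq \e^{-(2d+1)}$, the support of $M^\e[f^k]$ in $v$ is contained in a ball of $\e$-bounded radius around an $\e$-bounded center, and its amplitude is $\e$-bounded, so $(1+|v|^q)M^\e[f^k]\leq C_\e$ uniformly in $k$, and Duhamel closes the estimate.

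The heart of the proof is the uniform $L^1_2\cap L^{1+2/n}$ estimate, where, unlike in the $\gamma=(d+2)/d$ case, the $L^{1+2/n}$ component must be genuinely propagated via a kinetic-entropy argument. Using the strict convexity of $H(f,v)=\frac{|v|^2}{2}f+\frac{1}{2c^{2/n}(1+2/n)}f^{1+2/n}$ in $f$, one has pointwise
\[
\pa_f H(f^{k+1},v)\bigl(M^\e[f^k]-f^{k+1}\bigr) \leq H(M^\e[f^k],v) - H(f^{k+1},v);
\]
multiplying \eqref{app_weak_eq2} by $\pa_f H(f^{k+1},v)$ and integrating over $\T^d\times\R^d$ (the transport term is a perfect divergence) gives
\[
\frac{d}{dt}\inttr H(f^{k+1},v)\,dxdv + \inttr H(f^{k+1},v)\,dxdv \leq \inttr H(M^\e[f^k],v)\,dxdv.
\]
Since $\int H(M^\e[f^k],v)\,dv = \eta(\rho^\e_{f^k},u^\e_{f^k})$ and the minimization principle \eqref{minimization} applied to $f^k$ gives $\int \eta(\rho_{f^k},u_{f^k})\,dx \leq \inttr H(f^k,v)\,dxdv$, once the monotonicity $\int \eta(\rho^\e_{f^k},u^\e_{f^k})\,dx \leq \int \eta(\rho_{f^k},u_{f^k})\,dx$ is established, the right-hand side is controlled by $\inttr H(f^k,v)\,dxdv$. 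The same Gronwall/induction argument as for the $L^1$ bound then yields $\sup_k \inttr H(f^k,v)\,dxdv \leq \inttr H(f_{\e,0},v)\,dxdv$, and the $L^1_2$ (combined with the $L^1$ bound) and $L^{1+2/n}$ estimates follow from the nonnegativity of the two summands of $H$.

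The principal obstacle is verifying this monotonicity step, because the nonlinear $\e$-damping in the denominators of \eqref{reg_fcts} at first glance obstructs a direct Jensen/Cauchy--Schwarz comparison. The specific form of \eqref{reg_fcts} is, however, precisely chosen so that the estimate $0\leq \rho^\e_{f^k}\leq \rho_{f^k}*\theta^\e$ holds trivially, which combined with the convexity inequality $(\rho_{f^k}*\theta^\e)^\gamma \leq \rho_{f^k}^\gamma * \theta^\e$ (valid for $\gamma>1$) yields $\int (\rho^\e_{f^k})^\gamma\,dx \leq \int \rho_{f^k}^\gamma\,dx$. Similarly, the denominator in the definition of $u^\e_{f^k}$ is bounded below by $\rho_{f^k}*\theta^\e$, so
\[
\rho^\e_{f^k}|u^\e_{f^k}|^2 \leq \frac{|(\rho_{f^k}u_{f^k})*\theta^\e|^2}{\rho_{f^k}*\theta^\e} \leq (\rho_{f^k}|u_{f^k}|^2)*\theta^\e,
\]
the second inequality being the pointwise Cauchy--Schwarz bound for the convolution, and integration gives $\int \rho^\e_{f^k}|u^\e_{f^k}|^2\,dx \leq \int \rho_{f^k}|u_{f^k}|^2\,dx$. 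With both comparisons in hand, the entropy chain closes and the lemma is proved.
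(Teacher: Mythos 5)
Your proof is correct and follows essentially the same strategy as the paper: Duhamel/characteristics for existence and the $L^1$, $L^\infty_q$, and lower-bound estimates, then the entropy chain $\frac{d}{dt}\inttr H(f^{k+1},v)\,dxdv \le \inttr H(M^\e[f^k],v)\,dxdv - \inttr H(f^{k+1},v)\,dxdv$, the regularization monotonicity $\int \eta(\rho^\e_{f^k},u^\e_{f^k})\,dx \le \int \eta(\rho_{f^k},u_{f^k})\,dx$ via Jensen and pointwise Cauchy--Schwarz, the minimization principle, and Gr\"onwall. The only cosmetic difference is that you derive the entropy differential inequality in one step by multiplying by $\pa_f H(f^{k+1},v)$ and using convexity of $H$, whereas the paper writes the $|v|^2$-moment and $L^{1+2/n}$ balances separately and glues them with Young's inequality with exponents $\frac{n+2}{n}$ and $\frac{n+2}{2}$; these two derivations are equivalent, with yours slightly more systematic.
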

\begin{proof} Note that by the classical existence theory, we obtain the existence and uniqueness of solutions $f^k \in L^\infty(0,T; L^\infty_q(\T^d \times \R^d))$ to \eqref{app_weak_eq2}. Regarding the bound estimates on $f^k$, we first employ the same argument as in the proof of Lemma \ref{lem_uni} to obtain $\|f^k(t)\|_{L^1} \leq \|f_{\e,0}\|_{L^1}$ for all $k \geq 0$ and $t \in [0,T]$. We notice that the equilibrium function $M[f]$ is bounded by $1$ in the case $\gamma = \frac{d+2}{d}$, however, in the current case, we cannot simply bound it by some constant. In fact, to get the global-in-time existence of solutions, it is important to control the equilibrium function at least linearly in terms of $\|f^k\|_{L^1\cap L^r}$ for some $r>\gamma$. For this, differently from the argument used in the proof of Lemma \ref{lem_uni}, we make use of the kinetic entropy \eqref{k_entropy} and the minimization principle \eqref{minimization}.
	
We first estimate $\|f^k(\cdot,\cdot,t)\|_{L^1_2}+\|f^k(\cdot,\cdot,t)\|_{ L^{1+\frac 2n}}$. It follows from \eqref{moment_comp} (see also Lemma \ref{moments}) that
	$$
	\inttr M^\e[f^k]\,dxdv=\int_{\T^d}  \rho^\e_{f^k}\,dx\le \int_{\T^d}  \rho_{f^k}*\theta_\e\,dx\le \int_{\T^d}  \rho_{f^k}\,dx,
	$$
and thus 
$$
\frac{d}{dt}\inttr f^{k+1}\,dxdv\le \inttr f^{k}\,dxdv-\inttr f^{k+1}\,dxdv
$$
which implies $\|f^k(t)\|_{L^1}\le \|f_{\e,0}\|_{L^1}$ for all $k\ge 0$ and $t\in[0,T]$. For the estimate of $L^{1}$-norm of $|v|^2f^{k}$ and $L^{1+\frac2n}$-norm of $f^{k}$, we observe that
	$$
	\frac{d}{dt}	\inttr |v|^2 f^{k+1}\,dxdv =\inttr |v|^2 M^\e[f^k]\,dxdv-\inttr |v|^2 f^{k+1} \,dxdv
	$$
	and
	\begin{align*}
	\frac{d}{dt}\inttr(f^{k+1})^{1+\frac 2n}\,dxdv&=\frac{n+2}{n}\inttr	M^\e[f^k](f^{k+1})^{\frac 2n}\,dxdv-\frac{n+2}{n}\inttr	(f^{k+1})^{1+\frac 2n}\,dxdv.
	\end{align*}
	Using Young's inequality with $p=\frac{n+2}{n}$ and $q=\frac{n+2}{2}$, one finds
	$$
	\frac{d}{dt}\inttr(f^{k+1})^{1+\frac 2n}\,dxdv\le\inttr	(M^\e[f^k])^{1+\frac 2n}\,dxdv-\inttr	(f^{k+1})^{1+\frac 2n}\,dxdv.
	$$
	Combining these results, we deduce
		\begin{equation}\label{L gamma}
	\frac{d}{dt}\inttr H(f^{k+1},v)\,dxdv\le\inttr	H(M^\e[f^k],v)\,dxdv-\inttr	H(f^{k+1},v)\,dxdv.
	\end{equation}
	Note from Lemma \ref{moments} that
	\begin{align}\label{H in the middle}\begin{split}
	\intr	H(M^\e[f^k],v)\,dv&=\frac 12\rho_{f^k}^\e|u_{f^k}^\e|^2+\frac 12(\rho_{f^k}^\e)^\gamma+\frac{1}{2c^{2/n}}\frac{c^{1+\frac 2n}}{1+\frac 2n}\intr \left(\frac{2\gamma}{\gamma-1}(\rho^\e_{f_\e})^{\gamma-1}-|v-u^\e_{f_\e}|^2\right)^{\frac{n+2}{2}}_+\, dv\cr
	&=\frac 12\rho_{f^k}^\e|u_{f^k}^\e|^2+\frac 12(\rho_{f^k}^\e)^\gamma+\frac{c}{2+\frac 4n}\left(\frac{2\gamma}{\gamma-1}(\rho^\e_{f_\e})^{\gamma-1}\right)^{\frac{n+2}{2}+\frac d2}\intr \left(1-|w|^2\right)^{\frac{n+2}{2}}_+\, dw\cr
	&=\frac 12\rho_{f^k}^\e|u_{f^k}^\e|^2+\frac 32(\rho_{f^k}^\e)^\gamma.
\end{split}	\end{align}
In the last line, we have followed the same line of the proof of Lemma \ref{moments}, and used the following relation:
$$
B(x,y+1)=B(x,y)\frac{y}{x+y}.
$$
Then one can see that	
\begin{align}\label{entropy gamma} \begin{split}
\inttr	H(M^\e[f^k],v)\,dxdv&= \int_{\T^d}  \frac 12\rho_{f^k}^\e|u_{f^k}^\e|^2+\frac 32(\rho_{f^k}^\e)^\gamma\,dx\cr
&\le \int_{\T^d}  \frac 12(\rho_{f^k}|u_{f^k}|^2)*\theta^\e+\frac 32(\rho_{f^k}*\theta^\e)^\gamma\,dx\cr
&\le \int_{\T^d} \frac 12\rho_{f^k}|u_{f^k}|^2+\frac 32(\rho_{f^k})^\gamma\,dx\cr
&=\inttr	H(M[f^k],v)\,dxdv
\end{split}\end{align}
which together with the minimization principle \eqref{minimization} implies
\begin{equation}\label{H Me}
\inttr	H(M^\e[f^k],v)\,dxdv \le \inttr	H(f^k,v)\,dxdv.
\end{equation}
Thus, going back to \eqref{L gamma}, we conclude that  
	$$
\inttr H(f^{k},v)\,dxdv\le \inttr H(f_{0,\e},v)\,dxdv
$$
for any $k\in \mathbb{N}$ and $\gamma\in (1,\frac{d+2}{d})$, which gives the first assertion. 

The lower bound estimate on $\rho^k$ is exactly the same with that of Lemma \ref{lem_uni}, but we now have a strictly positive lower bound due to $x \in \T^d$.

We finally show the bound estimate of $\|f^k\|_{L^\infty_q}$ for $q > d+1$. It follows from  \eqref{bdd_e} that 
\begin{align*}
\left(1+|v|^q\right) M^\e[f^k](v)&=\left(1+|v|^q\right) c	\left(\frac{2\gamma}{\gamma-1}(\rho^\e_{f^k})^{\gamma-1}-|v-u^\e_{f^k}|^2\right)^{\frac n2} \mathbf{1}_{|v-u^\e_{f^k}|^2\le \frac{2\gamma}{\gamma-1}(\rho^\e_{f^k})^{\gamma-1}}\cr
& \leq C(\rho^\e_{f^k})^{\frac{n(\gamma-1)}{2}}\left(1+|u^\e_{f^k}|+(\rho^\e_{f^k})^{\frac{\gamma-1}{2}} \right)^q   \cr
& \leq C_\e
\end{align*}
for some $C_\e > 0$ independent of $k$. This yields
\[
\sup_{0 \leq t \leq T}\|f^{k+1}(\cdot,\cdot,t)\|_{L^\infty_q} \leq \|f_{\e,0}\|_{L^\infty_q} + C_\e,
\]
where $C_\e > 0$ is independent of $k$. This completes the proof.
\end{proof}

\subsection{Proof of Proposition \ref{prop_reg2}}
By using almost the same argument as in Section \ref{ssec_cauchy} and Lemma \ref{lipschitz} in the case $\gamma \in (1,\frac{d+4}{d+2}]$, we have that there exists a limiting function $f\in L^\infty(0,T; L^\infty_q(\T^d \times \R^d))$ such that 
\bq\label{conv_050}
\sup_{0 \leq t \leq T}\|(f^k - f)(\cdot,\cdot,t)\|_{L^1_2} \to 0 
\eq
and
\bq\label{conv_051}
\sup_{0 \leq t \leq T}\lt(\|(\rho^\e_{f^k} - \rho^\e_f)(\cdot,t)\|_{L^p} + \|(u^\e_{f^k} - u^\e_f)(\cdot,t)\|_{L^p}\rt) \to 0
\eq
as $k \to \infty$ for any $p \in [1,\infty)$. We now claim that the limiting function $f$ obtained in the above satisfies the equation \eqref{reg_eqn2} in the sense of distributions. For this, it suffices to show that 
$$
\int_0^T \inttr M^\e[f^k] \eta\,dxdvdt \to \int_0^T \inttr M^\e[f] \eta\,dxdvdt \quad \mbox{as} \quad k \to \infty
$$
for any $\eta \in \mc^1_c(\T^d \times \R^d \times [0,T])$. 
It follows from Lemma \ref{lipschitz} that
	\begin{align*}
	\int_0^T \inttr |M^\e[f^k]-M^\e[f] |\eta\,dxdvdt&\le  \|\eta\|_{L^\infty}\int_0^T\inttr|M^\e[f^k]-M^\e[f]| \,dxdvdt\cr
	&\le C\int_0^T\|\rho^{\e}_{f^k}-\rho^\e_f\|_{L^1}+\|u^{\e}_{f^k}-u^\e_f\|_{L^1}\,dt.
	\end{align*}
 Combining the last inequality with \eqref{conv_051} gives the desired result.

To prove the kinetic entropy inequality \eqref{kin_en2}, we use almost the same argument as in Section \ref{ssec_pro}. It follows from \eqref{L gamma} and \eqref{entropy gamma} that
\begin{align*}
&\inttr H(f^{k+1},v)\,dxdv+\int_0^t\inttr H(f^{k+1},v)\,dxdvds-\int_0^t\int_{\T^d} \frac 12\rho_{f^k}^\e|u_{f^k}^\e|^2+\frac 32(\rho_{f^k}^\e)^\gamma\,dxds \cr
&=\inttr H(f_{\e,0},v)\,dxdv,
\end{align*}
 where $H(\cdot,v)$ is the kinetic entropy defined in \eqref{k_entropy} as  
$$
H(h,v)=\frac{|v|^2}{2}h+\frac{1}{2c^{2/n}}\frac{h^{1+2/n}}{1+2/n}.
$$
Thanks to $L^1$ convergence in \eqref{conv_050} and $L^\infty_q$-estimates of $f^{k}$ and $f$, it is enough to show that
$$
\int_0^t\int_{\T^d}  \rho_{f^k}^\e|u_{f^k}^\e|^2+3(\rho_{f^k}^\e)^\gamma\,dxds\rightarrow \int_0^t\int_{\T^d} \rho_{f}^\e|u_{f}^\e|^2+3(\rho_{f}^\e)^\gamma\,dxds\quad\mbox{as}\quad n\rightarrow \infty.
$$
Since $\rho^\e_{(\cdot)}$ and $u^\e_{(\cdot)}$ are bounded from above by $1/\e^{2d+1}$, and $\gamma$ is bigger than $1$, we deduce
\begin{align*}
\int_{\T^d}  \left\{\rho_{f^k}^\e|u_{f^k}^\e|^2+3(\rho_{f^k}^\e)^\gamma\right\}- \left\{\rho_{f}^\e|u_{f}^\e|^2+3(\rho_{f}^\e)^\gamma\right\}\,dx\le C_\e\left(\|\rho_{f^k}^\e-\rho_{f}^\e\|_{L^1}+\|u_{f^k}^\e-u_{f}^\e\|_{L^1} \right),
\end{align*}
which together with \eqref{conv_051} implies
$$
\inttr H(f,v)\,dxdv +\int_0^t\inttr H(f,v)-H(M^\e[f],v)\,dxdvds=\inttr H(f_{\e,0},v) \,dxdv.
$$

Finally, combining the above identity with \eqref{entropy gamma} and the minimization principle \eqref{minimization}, we obtain the uniform-in-$\e$ bound estimates on $f_\e$ presented in Proposition \ref{prop_reg2}.

\subsection{Proof of Theorem \ref{main_thm}: $\gamma \in (1,\frac{d+4}{d+2}]$ case}

Due to \eqref{sp_mom} and \eqref{uniform bound}, one finds
$$
\int_0^T\inttr (1+|v|^2+f_\e^{\frac2n}) f_\e \,dxdvdt\le C.
$$	
Applying Dunford-Pettis theorem, we then have that there exists  $f \in L^1 (\T^d \times \R^d \times (0,T))$ such that
\begin{align*}
f_\e & \rightharpoonup f \quad\hspace{5.3mm} \mbox{in } L^1 (\T^d \times \R^d \times (0,T)),  \cr
\rho_{f_\e} &\rightharpoonup \rho_f \quad\hspace{3.8mm}  \mbox{in } L^1 (\T^d \times (0,T)), \quad \mbox{and}\cr
\rho_{f_\e} u_{f_\e} &\rightharpoonup \rho_f u_f \quad  \mbox{in } L^1 (\T^d \times (0,T)).
\end{align*}
This, together with the velocity averaging lemma introduced in \cite{GLPS88,Per89}, implies  that for $p \in \lt[1,\frac{n+2}{n}\rt]$,
\begin{align*}
f_\e &\rightarrow f \quad \hspace{5.3mm}\mbox{in } L^p (\T^d \times \R^d \times (0,T)),\cr
\rho_{f_\e} &\rightarrow \rho_f \quad \hspace{3.8mm}\mbox{in } L^1 (\T^d \times (0,T)), \quad \mbox{and}\cr
\rho_{f_\e} u_{f_\e} &\rightarrow \rho_f u_f \quad \mbox{in } L^1 (\T^d \times (0,T)).
\end{align*}
Thus, we deduce
\begin{equation}\label{a.e. conv}
\rho^\e_{f_\e} \to \rho_f,  \quad \mbox{and} \quad u^\e_{f_\e} \to u_f  \quad \mbox{a.e. on } E,
\end{equation}
 where $E := \{(x,t)\in \T^d\times [0,T] : \rho_f(x,t) > 0\}$. We then show that the limiting function $f$ is the weak solution to \eqref{BGK}, and again for this, it is sufficient to obtain the following convergence:
\bq\label{conv_MM}
\int_0^T \inttr M^\e[f_\e] \eta\,dxdvdt \to \int_0^T \inttr M[f] \eta\,dxdvdt \quad \mbox{as} \quad \e \to 0
\eq
for all $\eta \in \mc_c^1(\T^d \times \R^d \times [0,T])$  with $\eta(x,v,T) = 0$.  Recall that
\[
M^\e[f_\e] = c\left(\frac{2\gamma}{\gamma-1}(\rho^\e_{f_\e})^{\gamma-1}-|v-u^\e_{f_\e}|^2\right)^{n/2}\mathbf{1}_{|v-u^\e_{f_\e}|^2\le \frac{2\gamma}{\gamma-1}(\rho^\e_{f_\e})^{\gamma-1}}
\]
and
\[
M[f] = c\left(\frac{2\gamma}{\gamma-1}(\rho_{f})^{\gamma-1}-|v-u_{f}|^2\right)^{n/2}\mathbf{1}_{|v-u_{f}|^2\le \frac{2\gamma}{\gamma-1}(\rho_{f})^{\gamma-1}}.
\]
Due to \eqref{a.e. conv}, one can see that $M^\e[f_\e](v)$ converges to $M[f]$ a.e. on $E\times \mathbb{R}^d$. Moreover, it follows from \eqref{kin_en2}, \eqref{H in the middle}, and \eqref{H Me} that
\begin{align*}
\|M^\e[f_\e]\|_{L^{1+2/n}}^{1+2/n}&=c^{\frac 2n}\left(1+\frac 2n\right)\int_{ \T^d}\rho_{f_\e}^\gamma \,dx<C \inttr H(f_\e,v)\,dxdv <C.
\end{align*}
We then follow the same argument as in Section \ref{sec_main} to have the convergence \eqref{conv_MM}. This completes the proof.
%
%
%
%
%

\appendix
 
\section{Appendix}
In this appendix, we present the estimates on the first three moments of the equilibrium function $M[f]$ \eqref{Maxwellian}. Recall 
$$
M [f]= \begin{cases}
 \mathbf{1}_{|u_f-v|^d\le c_d\rho_f}\qquad &\text{for}\quad \gamma=\frac{d+2}{d},\\[2mm]
 c\left(\frac{2\gamma}{\gamma-1}\rho_f^{\gamma-1}-|v-u_f|^2\right)^{n/2}_+\qquad &\text{for} \quad \gamma \in (1,\frac{d+2}d).
\end{cases}
$$
\begin{lemma}\label{moments} Let $\gamma \in (1, \frac{d+2}d]$. Then we have
$$
	\int_{\mathbb{R}^d}(1,v,|v|^2)  M[f]\,dv =
	\left(\rho_f,\rho_fu_f, 
	\rho_f |u_f|^2 +dC_d \rho_f^\gamma \right),
	$$
where $C_d > 0$ is given by 
$$
C_d = \begin{cases}
\frac{\left|\mathbb{S}_{d-1}\right|}{d(d+2)}\left(\frac{d}{\left|\mathbb{S}_{d-1}\right|}\right)^{\frac{d+2}{d}} \qquad &\text{for}\quad \gamma=\frac{d+2}{d},\\[2mm]
1\qquad &\text{for} \quad \gamma \in (1,\frac{d+2}d).
\end{cases}
$$
\end{lemma}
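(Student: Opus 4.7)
My plan is to treat the two branches of $M[f]$ separately; in both cases the zeroth and first moments are essentially immediate, and the work is in the second moment.

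For the endpoint case $\gamma=\frac{d+2}{d}$, the support of $M[f]$ is the ball $B(u_f,r)$ with $r=(c_d\rho_f)^{1/d}$. Then $\int M[f]\,dv=|B(u_f,r)|=\frac{|\mathbb{S}_{d-1}|}{d}r^d=\rho_f$ by the choice of $c_d$, and $\int vM[f]\,dv=u_f\,|B(u_f,r)|=\rho_f u_f$ by radial symmetry. For the second moment, translating $w=v-u_f$ and using $\int w\,dw=0$ over the centered ball, I get
$$
\int|v|^2M[f]\,dv=\rho_f|u_f|^2+\int_{|w|\le r}|w|^2\,dw=\rho_f|u_f|^2+\frac{|\mathbb{S}_{d-1}|}{d+2}r^{d+2}.
$$
Since $r^{d+2}=c_d^{(d+2)/d}\rho_f^{\gamma}$, the coefficient is exactly $dC_d$ with the claimed $C_d$.

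For $\gamma\in(1,\frac{d+2}{d})$, I set $R^2=\frac{2\gamma}{\gamma-1}\rho_f^{\gamma-1}$ and change variables $w=(v-u_f)/R$, which converts $M[f]$ on its support to $cR^n(1-|w|^2)^{n/2}$ on the unit ball. Passing to spherical coordinates in $w$ and then $t=|w|^2$ reduces every moment to a Beta integral $B(\tfrac{d}{2}+j,\tfrac{n}{2}+1)$ (where $j=0$ for mass and $j=1$ for kinetic energy). The key algebraic simplification is that $\tfrac{d}{2}+\tfrac{n}{2}+1=\tfrac{\gamma}{\gamma-1}$, which matches the $\Gamma$-factor in the normalization constant $c$; plugging in shows $\int M[f]\,dv=\rho_f$. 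The first moment again vanishes by symmetry after the shift, giving $\int vM[f]\,dv=\rho_f u_f$.

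For the second moment I use the identity $|w|^2(1-|w|^2)^{n/2}=(1-|w|^2)^{n/2}-(1-|w|^2)^{(n+2)/2}$ to express $\int|v-u_f|^2M[f]\,dv$ in terms of the Beta functions $B(d/2,n/2+1)$ and $B(d/2,n/2+2)$, and apply $B(x,y+1)=B(x,y)\frac{y}{x+y}$ (as noted after \eqref{H in the middle}) with $x=d/2$, $y=n/2+1$. This yields the factor $1-\frac{n/2+1}{d/2+n/2+1}=\frac{d(\gamma-1)}{2\gamma}$, which cancels cleanly with $R^2=\frac{2\gamma}{\gamma-1}\rho_f^{\gamma-1}$ to leave exactly $d\rho_f^\gamma$, i.e.\ $C_d=1$. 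The only delicate step is to keep the $\Gamma$-identity $\tfrac{d}{2}+\tfrac{n}{2}+1=\tfrac{\gamma}{\gamma-1}$ in view so that all normalization constants telescope; everything else is routine.
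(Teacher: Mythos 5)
Your proof is correct and follows essentially the same route as the paper: translate to $u_f$, scale to the unit ball, pass to spherical coordinates, and reduce each moment to a Beta integral, using $\tfrac{d}{2}+\tfrac{n}{2}+1=\tfrac{\gamma}{\gamma-1}$ so that the $\Gamma$-factors in $c$ cancel. The only cosmetic difference is in the second-moment reduction, where the paper evaluates $\int |w|^2(1-|w|^2)^{n/2}\,dw$ directly as $B(\tfrac d2+1,\tfrac n2+1)$ and applies $B(x+1,y)=\tfrac{x}{x+y}B(x,y)$, whereas you split $|w|^2=1-(1-|w|^2)$ and use $B(x,y+1)=\tfrac{y}{x+y}B(x,y)$; the two identities produce the identical factor $\tfrac{d(\gamma-1)}{2\gamma}$.
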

\begin{proof} We divide the proof into two cases: $\gamma = \frac{d+2}d$ and $\gamma \in (1, \frac{d+2}d$).

	\vspace{.2cm}

$\bullet$  Case $\gamma = \frac{d+2}d$: By the change of variables and spherical coordinates, we obtain
	\begin{align*}
	\int_{\mathbb{R}^d} M[f]\,dv &=\int_{\mathbb{R}^d} \mathbf{1}_{|u_f-v|^d\le c_d\rho_f}\,dv \cr
	&=\int_{0}^{2\pi}\int_{0}^{\pi}\cdots\int_{0}^{\pi}\int_{0}^{(c_d\rho_f)^{1/d}}r^{d-1}(\sin^{d-2}\varphi_1) (\sin^{d-3}\varphi_{2})\cdots (\sin\varphi_{d-2}) \,drd\varphi_1\cdots d\varphi_{d-1}\cr
	&=\frac{c_d\rho_f}{d} \left|\mathbb{S}_{d-1}\right|\cr
	&=\rho_f.
	\end{align*}
	This together with the oddness gives
	\begin{align*}
	\int_{\mathbb{R}^d}v M[f]\,dv &=\int_{\mathbb{R}^d} (v+u_f)\mathbf{1}_{|v|^d\le c_d\rho_f}\,dv =u_f\int_{\mathbb{R}^d} \mathbf{1}_{|v|^d\le c_d\rho_f}\,dv=\rho_fu_f.
	\end{align*}
	Analogously, we find
		\begin{align*}
	&\int_{\mathbb{R}^d}|v|^2 M[f]\,dv \cr
	&\quad =\int_{\mathbb{R}^d} (|v|^2+2v\cdot u_f +|u_f|^2)\mathbf{1}_{|v|^d\le c_d\rho_f}\,dv \cr
	&\quad =\rho_f|u_f|^2+\int_{0}^{2\pi}\int_{0}^{\pi}\cdots\int_{0}^{\pi}\int_{0}^{(c_d\rho_f)^{1/d}}r^{d+1}(\sin^{d-2}\varphi_1) (\sin^{d-3}\varphi_{2})\cdots (\sin\varphi_{d-2})  \,drd\varphi_1\cdots d\varphi_{d-1}\cr
	&\quad =\rho_f|u_f|^2+\frac{\left|\mathbb{S}_{d-1}\right|}{d+2}\left(c_d\rho_f\right)^{\frac{d+2}{d}} \cr
	&\quad =\rho_f|u_f|^2+\frac{\left|\mathbb{S}_{d-1}\right|}{d+2}\left(\frac{d}{\left|\mathbb{S}_{d-1}\right|}\right)^{\frac{d+2}{d}}\rho_f^{\frac{d+2}{d}}.
	\end{align*}
	
		\vspace{.2cm}
	
$\bullet$ Case $\gamma \in (1, \frac{d+2}d$): By the change of variables, we first observe 
	\begin{align*}
	\int_{\mathbb{R}^d}M[f]\,dv &=c	\int_{\mathbb{R}^d}\left(\frac{2\gamma}{\gamma-1}\rho_f^{\gamma-1}-|v-u_f|^2\right)^{\frac n2} \mathbf{1}_{|v-u_f|^2\le \frac{2\gamma}{\gamma-1}\rho_f^{\gamma-1}}\,dv\cr
	&= c	\int_{\mathbb{R}^d}\left(\frac{2\gamma}{\gamma-1}\rho_f^{\gamma-1}-|v|^2\right)^{\frac n2} \mathbf{1}_{|v|^2\le \frac{2\gamma}{\gamma-1}\rho_f^{\gamma-1}}\,dv\cr
	&= c\left(\frac{2\gamma}{\gamma-1}\rho_f^{\gamma-1}\right)^{\frac{n+d}{2}}	\int_{\mathbb{R}^d}\left(1-|v|^2\right)^{\frac n2} \mathbf{1}_{|v|^2\le 1}\,dv\cr
	&= c\left(\frac{2\gamma}{\gamma-1}\rho_f^{\gamma-1}\right)^{\frac{n+d}{2}}	\left|\mathbb{S}_{d-1}\right|\int_0^1\left(1-r^2\right)^{\frac n2} r^{d-1}\,dr.
	\end{align*}
	By the definition of $c$ and $n$, we get
	\begin{align*}
	\int_{\mathbb{R}^d}M[f]\,dv &= \frac{\Gamma\left(\frac{\gamma}{\gamma-1}\right)}{\pi^{\frac d2}\Gamma(\frac n2+1)}\rho_f	\left|\mathbb{S}_{d-1}\right|\int_0^1\left(1-r^2\right)^{\frac n2} r^{d-1}\,dr\cr
	&= 2\rho_f\frac{\Gamma\left(\frac{\gamma}{\gamma-1}\right)}{\Gamma(\frac n2+1)\Gamma\left(\frac{d}{2}\right)}	\int_0^1\left(1-r^2\right)^{\frac n2} r^{d-1}\,dr,
		\end{align*}
where we used
$$ 	|\mathbb{S}_{d-1}|=\frac{2\pi^{\frac{d}{2}}}{\Gamma\left(\frac{d}{2}\right)}.
$$
Using the change of variables $r^2\rightarrow r$, we find
\[
\int_{\mathbb{R}^d}M[f]\,dv = \rho_f\frac{\Gamma\left(\frac{\gamma}{\gamma-1}\right)}{\Gamma(\frac n2+1)\Gamma\left(\frac{d}{2}\right)}	\int_0^1\left(1-r\right)^{\frac n2} r^{\frac d2-1}\,dr = \rho_f\frac{\Gamma\left(\frac{\gamma}{\gamma-1}\right)}{\Gamma(\frac n2+1)\Gamma\left(\frac{d}{2}\right)}	B\left(\frac d2,\frac n2+1\right).
\]
Here $B(\cdot,\cdot)$ denotes the Beta function:
$$
B\left(p,q\right):=\int_0^1\left(1-r\right)^{q-1} r^{p-1}\,dr,
$$
where $p$ and $q$ are complex number whose real parts are  positive. It is well-known that the Beta function is related to the Gamma function as
$$
B(p,q)=\frac{\Gamma(p)\Gamma(q)}{\Gamma(p+q)}.
$$
This together with the definition of $k$ deduces
	\begin{align*}
\int_{\mathbb{R}^d}M[f]\,dv 	&=\rho_f\frac{\Gamma\left(\frac{\gamma}{\gamma-1}\right)}{\Gamma(\frac n2+1)\Gamma\left(\frac{d}{2}\right)}	\frac{\Gamma\left(\frac d2\right)\Gamma\left(\frac n2+1\right)}{\Gamma\left(\frac{d+n}{2}+1\right)}=\rho_f.
	\end{align*}
Similarly, we also obtain
	\begin{align*} 
	\int_{\mathbb{R}^d}vM[f]\,dv &=c	\int_{\mathbb{R}^d}v\left(\frac{2\gamma}{\gamma-1}\rho_f^{\gamma-1}-|v-u_f|^2\right)^{\frac n2} \mathbf{1}_{|v-u_f|^2\le \frac{2\gamma}{\gamma-1}\rho_f^{\gamma-1}}\,dv\cr
	&= c	\int_{\mathbb{R}^d}(v+u_f)\left(\frac{2\gamma}{\gamma-1}\rho_f^{\gamma-1}-|v|^2\right)^{\frac n2} \mathbf{1}_{|v|^2\le \frac{2\gamma}{\gamma-1}\rho_f^{\gamma-1}}\,dv\cr
	&= cu_f\left(\frac{2\gamma}{\gamma-1}\rho_f^{\gamma-1}\right)^{\frac{n+d}{2}}	\int_{\mathbb{R}^d}\left(1-|v|^2\right)^{\frac n2} \mathbf{1}_{|v|^2\le 1}\,dv\cr
	&= \rho_f u_f.
	\end{align*}
Finally, we estimate 
	\begin{align*} 
	\int_{\mathbb{R}^d}|v|^2M[f]\,dv 
	& = c\int_{\mathbb{R}^d}|v+u_f|^2\left(\frac{2\gamma}{\gamma-1}\rho_f^{\gamma-1}-|v|^2\right)^{\frac n2} \mathbf{1}_{|v|^2\le \frac{2\gamma}{\gamma-1}\rho_f^{\gamma-1}}\,dv\cr
	&  = c|u_f|^2\left(\frac{2\gamma}{\gamma-1}\rho_f^{\gamma-1}\right)^{\frac{n+d}{2}}	\int_{\mathbb{R}^d}\left(1-|v|^2\right)^{\frac n2} \mathbf{1}_{|v|^2\le 1}\,dv\cr
	&\quad +c\left(\frac{2\gamma}{\gamma-1}\rho_f^{\gamma-1}\right)^{\frac{n+d}{2}+1}	\int_{\mathbb{R}^d}|v|^2\left(1-|v|^2\right)^{\frac n2} \mathbf{1}_{|v|^2\le 1}\,dv\cr
	&= \rho_f |u_f|^2 +\frac{2\gamma}{\gamma-1}\rho_f^{\gamma-1}\frac{\Gamma\left(\frac{\gamma}{\gamma-1}\right)}{\pi^{\frac d2}\Gamma(\frac n2+1)}\rho_f	\left|\mathbb{S}_{d-1}\right|\int_0^1\left(1-r^2\right)^{\frac n2} r^{d+1}\,dr\cr
	&=\rho_f |u_f|^2 +\frac{2\gamma}{\gamma-1}\rho_f^{\gamma}\frac{\Gamma\left(\frac{\gamma}{\gamma-1}\right)}{\Gamma(\frac n2+1)\Gamma\left(\frac{d}{2}\right)}B\left(\frac{d}{2}+1,\frac n2+1\right).
	\end{align*}
Note that for positive real numbers $x$ and $y$, the Beta function satisfies  
	$$
	B(x+1,y)=\frac{x}{x+y}B(x,y).
	$$
Using the above relation, we have
	\begin{align*} 
	\int_{\mathbb{R}^d}|v|^2M[f]\,dv &=\rho_f |u_f|^2 +\frac{2\gamma}{\gamma-1}\rho_f^{\gamma}\frac{\Gamma\left(\frac{\gamma}{\gamma-1}\right)}{\Gamma(\frac n2+1)\Gamma\left(\frac{d}{2}\right)}\left\{\frac{d/2}{(d+n)/2+1}B\left(\frac{d}{2},\frac n2+1\right)\right\}\cr
	&=\rho_f |u_f|^2 +d\rho_f^{\gamma}.
	\end{align*}
This completes the proof.
\end{proof}

%
%
%
%
%
%

\section{Proof of Lemma \ref{lipschitz}: positive part function case}\label{app_lem}
In this appendix, we provide the details on the proof of Lemma \ref{lipschitz} when the equilibrium function is given as the positive part function in \eqref{Maxwellian}, i.e. $M[f]$ takes the form of
\begin{equation*}
M [f]=  
c\left(\frac{2\gamma}{\gamma-1}\rho_f^{\gamma-1}-|v-u_f|^2\right)^{n/2}_+,
\end{equation*}
where the constants $c$ and $n$ are given as
\begin{align*}
c=\left(\frac{2\gamma}{\gamma-1}\right)^{-\frac{1}{\gamma-1}}\frac{\Gamma\left(\frac{\gamma}{\gamma-1}\right)}{\pi^{d/2}\Gamma(n/2+1)} \qquad \mbox{and} \qquad  n=\frac{2}{\gamma-1}-d.
\end{align*}
To apply the similar argument used in Section \ref{sec_cau}, we rewrite the equilibrium function in terms of an indicator function as
\begin{equation*}
M [f]=  
c\left(\frac{2\gamma}{\gamma-1}\rho_f^{\gamma-1}-|v-u_f|^2\right)^{n/2} \mathbf{1}_{|v-u_f|^2\le \frac{2\gamma}{\gamma-1}\rho_f^{\gamma-1}}.
\end{equation*}
Through the following two subsections, we provide the Lipschitz continuity of $M[f]$ in $L^1_2(\R^d \times \R^d)$ for $1<\gamma\le\frac{d+4}{d+2}$ when $d \geq 2$ and $1 < \gamma < 3$ when $d=1$. For this, similarly as in Section \ref{sec_cau}, we divide the proof into two cases: $d=1$ and $d\geq 2$.

\subsection{Proof of Lemma \ref{lipschitz} in the case $d=1$ and $1<\gamma < 3$}

In the mono-dimensional case, we decompose $\mathbb{R}$ into three parts as $\mathbb{R}=\sfD_1\cup \sfD_2\cup \sfD_3$, where
	\begin{align*}\begin{split}
	\sfD_1&:=\left\{v\in\mathbb{R}\ :\ |u_f-u_g|> r_f+r_g  \right\},\cr
	\sfD_2&:=\left\{v\in\mathbb{R}\ :\ \left| r_f-r_g\right|\le |u_f-u_g|\le r_f+r_g \right\},\quad \mbox{and}\cr
	\sfD_3&:=\left\{v\in\mathbb{R}\ : \ |u_f-u_g|\le \left|r_f-r_g\right| \right\}
	\end{split}\end{align*}
	with
	$$
	r_f:=\left(\frac{2\gamma}{\gamma-1}\right)^{\frac 12}\rho_f^{\frac{\gamma-1}{2}}\quad \mbox{and} \quad r_g:=\left(\frac{2\gamma}{\gamma-1}\right)^{\frac 12}\rho_g^{\frac{\gamma-1}{2}}.
	$$
 We then split $\|M[f]-M[g]\|_{L^1_2}$ into three terms:
	\begin{align*}
	\int_{\mathbb{R}}(1+v^2)\left|M[f]-M[g]\right| dv=\sum_{i=1}^3\int_{\sfD_i}(1+v^2)\left|M[f]-M[g]\right| dv =: I + II + III.
	\end{align*}
	By symmetry, without loss of generality, we only deal with the case $u_f \le u_g$.
		
		\vspace{.2cm}
		
	$\bullet$ Estimate of $I$: Since the supports of $M[f]$ and $M[g]$ do not intersect on the domain $\sfD_1$, we get
	\begin{align*}
	I=\int_{\mathbb{R}} (1+v^2)M[f]\,dv+\int_{\mathbb{R}} (1+v^2)M[g]\,dv=\left(\rho_f+\rho_fu_f^2+\rho_f^\gamma\right)+\left(\rho_g+\rho_gu_g^2+\rho_g^\gamma\right).
	\end{align*}
	Using the condition of $\sfD_1$, we find
	\begin{align*}
	\rho_f^\gamma+\rho_g^\gamma&=\left(\frac{2\gamma}{\gamma-1}\right)^{-\frac{\gamma}{\gamma-1}}\left(r_f^{\frac{2\gamma}{\gamma-1}}+r_g^{\frac{2\gamma}{\gamma-1}} \right)\le C \left(r_f+r_g \right)^{\frac{2\gamma}{\gamma-1}}\le C\left|u_f-u_g \right|^{\frac{2\gamma}{\gamma-1}},
	\end{align*}
and thus
	\begin{align}\label{I1}\begin{split}
	I&=\left(\rho_f+\rho_g\right)\left(1+|u_f|^2 \right) -(u_f-u_g) \rho_g(u_f+u_g)+\rho_f^\gamma+\rho_g^\gamma\cr
	&\le C\left|u_f-u_g\right|^{\frac{2}{\gamma-1}}\left(1+|u_f|^2 \right)+\left|u_f-u_g\right| \rho_g(u_f+u_g)+C\left|u_f-u_g\right|^{\frac{2\gamma}{\gamma-1}}.
	\end{split}\end{align}
	Since $1<\gamma < 3 $, this together with the bound assumptions \eqref{bounds2} gives
	\begin{align*}
	I\le C|u_f-u_g|.
	\end{align*}
	
		\vspace{.2cm}
	
	$\bullet$ Estimate of $II$: In this case, the supports of $M[f]$ and $M[g]$ partially intersect, see Fig. \ref{domain_21}. We estimate
	\begin{align*}
	II&=\int^{u_g-r_g}_{u_f-r_f} (1+v^2)M[f]\,dv+\int^{u_g+r_g}_{u_f+r_f} (1+v^2)M[g]\,dv\cr
	&\quad +c\int_{u_g-r_g}^{u_f+r_f} (1+v^2)\left|\left(r_f^2-|v-u_f|^2\right)^{\frac n2}-\left(r_g^2-|v-u_g|^2\right)^{\frac n2}\right|\,dv\cr
	&=: II_1+II_2+II_3.
	\end{align*}
		
	\begin{figure}[!h]
		\begin{center}
			\includegraphics[scale=0.4]{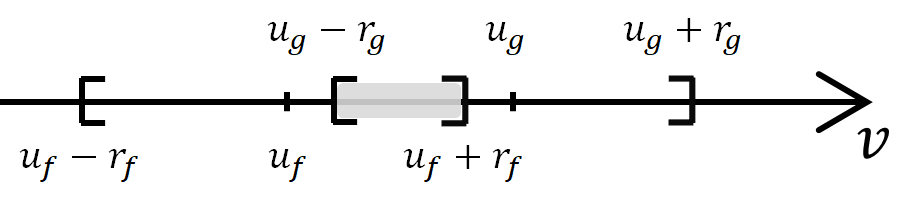}
			\caption{
Illustration of the domain $\sfD_2$ 
}
\label{domain_21}
		\end{center}
	\end{figure} 
	
	-- Estimate of $II_1$: Since $II_1$ is defined within the support of $M[f]$, we obtain
	\begin{align*}
	II_1&=c\int^{u_g-r_g}_{u_f-r_f} (1+v^2)\left(r_f^2-|v-u_f|^2\right)^{\frac n2}  dv\cr
	&\le cr_f^n\int^{u_g-r_g}_{u_f-r_f} (1+v^2) \,dv\cr
	&= cr_f^n (u_g-u_f+r_f-r_g)\left\{1+\frac 13\left((u_g-r_g)^2+(u_g-r_g)(u_f-r_g)+(u_f-r_g)^2\right)  \right\}.
	\end{align*}
	On the other hand, by the mean value theorem, we get
	\begin{align*}
	r_f-r_g&=\left(\frac{2\gamma}{\gamma-1}\right)^{\frac 12}\left( \rho_f^{\frac{\gamma-1}{2}}-\rho_g^{\frac{\gamma-1}{2}}\right)=\left(\frac{2\gamma}{\gamma-1}\right)^{\frac 12} \frac{\gamma-1}{2}(\rho_f-\rho_g)\int_0^1   \left( \theta\rho_f+(1-\theta)\rho_g\right)^{\frac{\gamma-1}{2}-1} d\theta
	\end{align*}
	which, together with the bound assumptions \eqref{bounds2} gives
	\begin{equation}\label{r rho}
	\left|r_f-r_g\right|\le C \left| \rho_f-\rho_g\right|.
	\end{equation}
	Thus,  it follows from  \eqref{bounds2}  that
	\begin{align*}
	II_1&\le cr_f^n \left(|u_f-u_g|+|r_f-r_g|\right)\left\{1+\frac 13\left((u_g-r_g)^2+(u_g-r_g)(u_f-r_g)+(u_f-r_g)^2\right)  \right\}\cr
	&\le C\left(|\rho_f-\rho_g|+|u_f-u_g|\right).
	\end{align*}
	
 \vspace{.2cm}
		
	-- Estimate of $II_2$: In the same manner as in $II_1$, we find
	\begin{align*}
	II_2&\le  	 cr_g^n \left(|u_f-u_g|+|r_f-r_g|\right)\left\{1+\frac 13\left((u_g+r_g)^2+(u_g+r_g)(u_f+r_g)+(u_f+r_g)^2\right)  \right\} \cr
	&\le C\left(|\rho_f-\rho_g|+|u_f-u_g|\right).
	\end{align*}
	
 \vspace{.2cm}
		
	-- Estimate of $II_3$: By the mean value theorem, we deduce
	\begin{align*}
	&\left|\left(r_f^2-|v-u_f|^2\right)^{\frac n2}-\left(r_g^2-|v-u_g|^2\right)^{\frac n2}\right|\cr
	&\quad =\frac n2\left|\left(r_f^2-|v-u_f|^2-r_g^2+|v-u_g|^2\right)\int_0^1  \left\{\theta\left(r_f^2-|v-u_f|^2 \right)+(1-\theta)\left(r_g^2-|v-u_g|^2\right) \right\}^{\frac n2-1} d\theta \right|\cr
	&\quad \le\frac n2\left|\left(r_f-r_g\right)\left(r_f+r_g\right)\int_0^1  \left\{\theta\left(r_f^2-|v-u_f|^2 \right)+(1-\theta)\left(r_g^2-|v-u_g|^2\right) \right\}^{\frac n2-1} d\theta \right|\cr
	&\qquad +\frac n2\left|(u_f-u_g)\left(2v-u_f-u_g\right)\int_0^1  \left\{\theta\left(r_f^2-|v-u_f|^2 \right)+(1-\theta)\left(r_g^2-|v-u_g|^2\right) \right\}^{\frac n2-1}  d\theta \right|. 
	\end{align*}
	Thus we use \eqref{bounds2} to estimate
	\begin{align}\label{1d I230}\begin{split}
	&\left|\left(r_f^2-|v-u_f|^2\right)^{\frac n2}-\left(r_g^2-|v-u_g|^2\right)^{\frac n2}\right|\cr
	&\quad \le C\left|r_f-r_g\right|\left|\int_0^1  \left\{\theta\left(r_f^2-|v-u_f|^2 \right)+(1-\theta)\left(r_g^2-|v-u_g|^2\right) \right\}^{\frac n2-1}  d\theta\right| \cr
	&\qquad +C\left|u_f-u_g\right|\left(1+|v|\right)\left|\int_0^1  \left\{\theta\left(r_f^2-|v-u_f|^2 \right)+(1-\theta)\left(r_g^2-|v-u_g|^2\right) \right\}^{\frac n2-1} d\theta\right| .
	\end{split}\end{align}
	Note that the function $\theta(r_f^2-|v-u_f|^2 )+(1-\theta)\left(r_g^2-|v-u_g|^2\right)$
	is positive for any $v$ belonging to the intersection of the supports of $M[f]$ and $M[g]$. Thus we can see that in \eqref{1d I230}, there is no singularity on the finite interval $ [u_g-r_g,u_f+r_f]$ even though $n/2<1$. Since $u_h$ and $r_h$ for $h \in \{f,g\}$ are bounded by \eqref{bounds2}, we have
	\begin{align}\label{1d I231}
	\begin{aligned}
	II_3&\le C\left|r_f-r_g\right|\int_{u_g-r_g}^{u_f+r_f} (1+v^2) \int_0^1  \left\{\theta\left(r_f^2-|v-u_f|^2 \right)+(1-\theta)\left(r_g^2-|v-u_g|^2\right) \right\}^{\frac n2-1} d\theta \,dv\cr
	&\quad +C|u_f-u_g|\int_{u_g-r_g}^{u_f+r_f} (1+v^2)^2 \int_0^1\left\{\theta\left(r_f^2-|v-u_f|^2 \right)+(1-\theta)\left(r_g^2-|v-u_g|^2\right) \right\}^{\frac n2-1} d\theta \,dv\cr
	&\le C\left(\left|r_f-r_g\right| +|u_f-u_g|\right)\cr
	&\le C\left(|\rho_f-\rho_g|+|u_f-u_g|\right)
	\end{aligned}
	\end{align}
	due to  \eqref{r rho}.
	Combining all of the above estimates yields
	\[
	II \leq C\left(|\rho_f-\rho_g|+|u_f-u_g|\right)
	\]
	for some $C>0$.
	
	\vspace{.2cm}
	
	$\bullet$ Estimate of $III$: To avoid the repetition of estimates, we only deal with the case where the support of $M[f]$ is completely contained within that of $M[g]$, i.e. $[u_f-r_f,u_f+r_f]\subseteq [u_g-r_g,u_g+r_g]$. In this case, we observe
\bq\label{est_III}
	III=\int_{u_g-r_g}^{u_f-r_f} (1+v^2) M[g]\,dv+	\int^{u_g+r_g}_{u_f+r_f} (1+v^2)M[g]\,dv +	\int_{u_f-r_f}^{u_f+r_f} (1+v^2)\left|M[f]-M[g]\right|dv.
\eq
	By \eqref{bounds2}, the first two terms can be estimated as
	\begin{align*}
	&\int_{u_g-r_g}^{u_f-r_f} (1+v^2) M[g]\,dv+\int^{u_g+r_g}_{u_f+r_f} (1+v^2)M[g]\,dv\cr
	&\quad \le c\int_{u_g-r_g}^{u_f-r_f} (1+v^2) \left(\frac{2\gamma}{\gamma-1}\rho_g^{\gamma-1}\right)^{n/2}\,dv+c\int^{u_g+r_g}_{u_f+r_f} (1+v^2)\left(\frac{2\gamma}{\gamma-1}\rho_g^{\gamma-1}\right)^{n/2}dv\cr
	&\quad \le C\left(|u_f-u_g|+|r_f-r_g| \right)
	\end{align*}
	which, combined with \eqref{r rho}, gives
	$$
	\int_{u_g-r_g}^{u_f-r_f} (1+v^2) M[g]\,dv+	\int^{u_g+r_g}_{u_f+r_f} (1+v^2)M[g]\,dv\le C\left(|\rho_f-\rho_g|+|u_f-u_g|\right).
	$$
	In the same manner as in the case $II_3$, we apply \eqref{1d I231} to the last term on the right hand side of \eqref{est_III} to obtain
	\begin{align*}
	\int_{u_f-r_f}^{u_f+r_f} (1+v^2)\left|M[f]-M[g]\right| dv&\le C	(\left|r_f-r_g\right|+\left|u_f-u_g\right|)\le C\left(|\rho_f-\rho_g|+|u_f-u_g|\right).
	\end{align*}
This completes the proof.

\subsection{Proof of Lemma \ref{lipschitz} in the case $d\geq 2$ and $1<\gamma\le\frac{d+4}{d+2}$}

We now consider the multi-dimensional cases $d\ge 2$, and in this case, we assume $\gamma\in (1,\frac{d+4}{d+2}]$. We begin by introducing several notations:
	\begin{align*}
	r_f&=\left(\frac{2\gamma}{\gamma-1}\right)^{\frac 12}\rho_f^{\frac{\gamma-1}{2}},\qquad r_g=\left(\frac{2\gamma}{\gamma-1}\right)^{\frac 12}\rho_g^{\frac{\gamma-1}{2}},\qquad U_{f,g}=|u_f-u_g|,\cr
	&\theta_f=\arccos \left(\frac{r_f^2+U_{f,g}^2-r_g^2}{2r_f U_{f,g}}\right),\quad \mbox{and} \quad \tilde{\theta}=\arccos\left(\frac{r_f^2-r_g^2-U_{f,g}^2}{2r_g U_{f,g}}\right).
	\end{align*}
	Using these newly defined functions, similarly as in Section \ref{ssec_bdy}, we split the domain $\mathbb{R}^d$ into four cases:
	\begin{align*}
	\sfD_1&=\left\{v\in\mathbb{R}^d : |u_f-u_g|> r_f+r_g  \right\},\cr
	\sfD_2&=\left\{v\in\mathbb{R}^d :  \left|r_f-r_g \right|\le |u_f-u_g|\le r_f+r_g\quad \text{and}\quad |u_f-u_g|^2> \left|r_f^2-r_g^2 \right|  \right\},\cr
	\sfD_3&=\left\{v\in\mathbb{R}^d : |r_f-r_g|\le |u_f-u_g|\le r_f+r_g\quad \text{and}\quad |u_f-u_g|^2\le\left|r_f^2-r_g^2\right|  \right\},\quad \mbox{and}\cr
	\sfD_4&=\left\{v\in\mathbb{R}^d :  |u_f-u_g|\le |r_f-r_g| \right\}.
	\end{align*}	
	The proof is similar to that of the case $\gamma = \frac{d+2}d$, but slightly simpler since for $\gamma\le\frac{d+4}{d+2}$, we can make use of the mean value theorem for the equilibrium function  as
	\begin{align*}
	&\left|\left(r_f^2-|v-u_f|^2\right)^{\frac n2}-\left(r_g^2-|v-u_g|^2\right)^{\frac n2}\right|\cr
	&\quad =\frac n2\left|\left(r_f^2-|v-u_f|^2-r_g^2+|v-u_g|^2\right)\int_0^1  \left\{\theta\left(r_f^2-|v-u_f|^2 \right)+(1-\theta)\left(r_g^2-|v-u_g|^2\right) \right\}^{\frac n2-1} \,d\theta \right|\cr
	&\quad \le\frac n2\left|\left(r_f-r_g\right)\left(r_f+r_g\right)\int_0^1  \left\{\theta\left(r_f^2-|v-u_f|^2 \right)+(1-\theta)\left(r_g^2-|v-u_g|^2\right) \right\}^{\frac n2-1} \,d\theta \right|\cr
	&\qquad +\frac n2\left|(u_f-u_g)\left(2v-u_f-u_g\right)\int_0^1  \left\{\theta\left(r_f^2-|v-u_f|^2 \right)+(1-\theta)\left(r_g^2-|v-u_g|^2\right) \right\}^{\frac n2-1} \,d\theta \right|. 
	\end{align*}
	This, combined with \eqref{bounds2} leads to
	\begin{align}\label{1d I23}\begin{split}
	&\left|\left(r_f^2-|v-u_f|^2\right)^{\frac n2}-\left(r_g^2-|v-u_g|^2\right)^{\frac n2}\right|\cr
	&\quad \le C\left|r_f-r_g\right|\left|\int_0^1  \left\{\theta\left(r_f^2-|v-u_f|^2 \right)+(1-\theta)\left(r_g^2-|v-u_g|^2\right) \right\}^{\frac n2-1} \,d\theta\right| \cr
	&\qquad +C\left|u_f-u_g\right|\left(1+|v|\right)\left|\int_0^1  \left\{\theta\left(r_f^2-|v-u_f|^2 \right)+(1-\theta)\left(r_g^2-|v-u_g|^2\right) \right\}^{\frac n2-1} \,d\theta\right| 
	\end{split}\end{align}
	which will be fruitfully used in this proof.
	Observe that
	\begin{align*}
		\intr(1+|v|^2)\left|M[f]-M[g]\right|\,dv=	\sum_{i=1}^4\int_{\sfD_i}(1+|v|^2)\left|M[f]-M[g]\right|dv=: \widetilde I + \widetilde{II} + \widetilde{III} + \widetilde{IV}.
	\end{align*}
	We then estimate it separately and only consider the case $r_f \le r_g$. 
	
\vspace{.2cm}

	$\bullet$ Estimate of $\widetilde{I}$: It follows from the velocity-moment estimates in \eqref{moment_comp} that
	\begin{align*}
	\widetilde{I}&=\int_{\mathbb{R}^d} (1+|v|^2)M[f]\,dv+\int_{\mathbb{R}^d} (1+|v|^2)M[g]\,dv		\cr
	&=\left(\rho_f+\rho_f |u_f|^2 +d\rho_f^{\gamma}\right)+\Big(\rho_g+\rho_g |u_g|^2 +d\rho_g^{\gamma}\Big)\cr
	&=(\rho_f+\rho_g)\left(1+|u_f|^2\right)+\rho_g \left(u_g+u_f\right)\cdot\left(u_g-u_f\right)+d\left(\rho_f^\gamma+\rho_g^\gamma\right).
	\end{align*}
	In the same manner as \eqref{I1}, we deduce
	$$
	\widetilde{I}\le C|u_f-u_g|.
	$$
	
	\vspace{.2cm}
	
	$\bullet$ Estimate of $\widetilde{II}$: Let us denote by $\mathbb{B}_f$ and $\mathbb{B}_g$ the supports of $M[f]$ and $M[g]$ respectively. We then have
	\begin{align*}
	\widetilde{II}=\int_{\mathbb{B}_f \setminus \mathbb{B}_g} (1+|v|^2)M[f]\,dv+\int_{\mathbb{B}_g\setminus\mathbb{B}_f} (1+|v|^2)M[g]\,dv +\int_{\mathbb{B}_f\cap\mathbb{B}_g} (1+|v|^2)\left|M[f]-M[g]\right| dv.
	\end{align*}
	We only deal with the first and third terms on the right hand side of the above for simplicity. For the first term, we translate $u_g$ to the origin. Then $\mathbb{B}_f \setminus \mathbb{B}_g$ can be described in spherical coordinates as either 
	\begin{align*}
	\mathbb{B}_f \setminus \mathbb{B}_g&=\left\{ (r,\theta,\varphi_1,\dots,\varphi_{d-2}) : r_g\le r\le r_1,\ 0\le \theta\le \theta_1,\ 0\le \varphi_1,\dots,\varphi_{d-3}\le \pi,\  0\le \varphi_{d-2}< 2\pi \right\}\cr
	&\ \quad \cup \left\{ (r,\theta,\varphi_1,\dots,\varphi_{d-2}) : r_2\le r\le r_1,\ \theta_1\le \theta\le \theta_2,\ 0\le \varphi_1,\dots,\varphi_{d-3}\le \pi,\  0\le \varphi_{d-2}< 2\pi \right\}\cr
	&=: \mathbb{B}_1\cup \mathbb{B}_2
	\end{align*}
	or
	\begin{align*}
	\mathbb{B}_f \setminus \mathbb{B}_g&=\left\{ (r,\theta,\varphi_1,\dots,\varphi_{d-2}) : r_g\le r\le r_1,\ 0\le \theta\le \theta_1,\ 0\le \varphi_1,\dots,\varphi_{d-3}\le \pi,\  0\le \varphi_{d-2}< 2\pi \right\}\cr
	&=: \mathbb{B}_3,
	\end{align*}
	where $r_1, r_{2}, \theta_1,$ and $ \theta_2 $ are given as
	\begin{align*}
	r_1&:=\sqrt{r_f^2-U_{f,g}^2+U_{f,g}^2\cos^2\theta}+U_{f,g}\cos\theta,\quad r_{2}:=U_{f,g}\cos\theta-\sqrt{r_f^2-U_{f,g}^2+U_{f,g}^2\cos^2\theta},\cr  &\qquad\quad\qquad\cos\theta_1:=\frac{r_g^2+U_{f,g}^2-r_f^2}{2r_gU_{f,g}},\quad \mbox{and} \quad \cos\theta_2:=\frac{\sqrt{U_{f,g}^2-r_f^2}}{U_{f,g}},
	\end{align*}
	respectively (see Fig. \ref{domains2}).
	 \begin{figure}[!h]
		\begin{center}
			\includegraphics[scale=0.45]{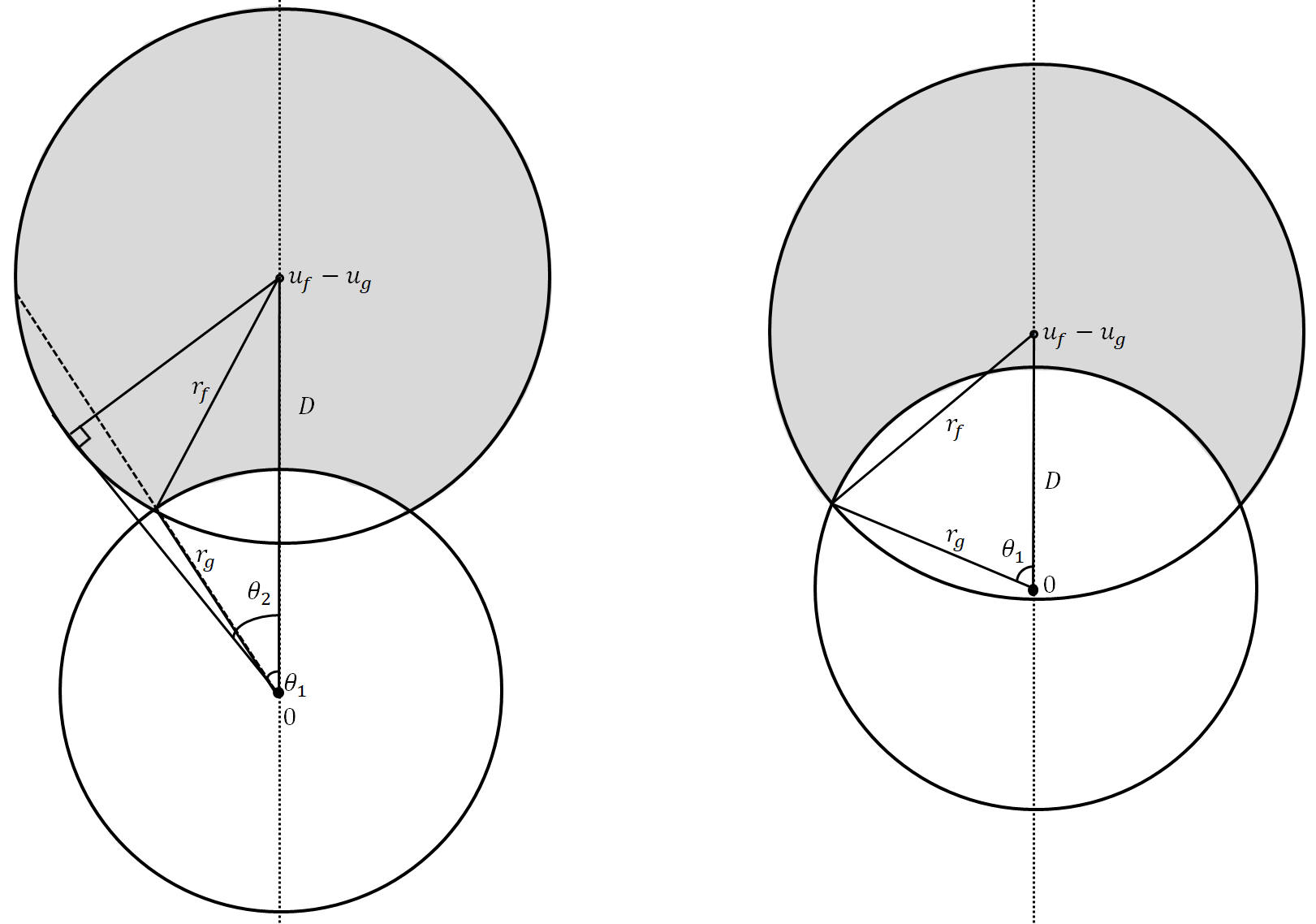}
			\caption{
				Illustrations of the domains $\mathbb{B}_1\cup \mathbb{B}_2$ \& $\mathbb{B}_3$
			}
			\label{domains2}
		\end{center}
	\end{figure} 
	 By \eqref{bounds2}, the case of $\mathbb{B}_1$ can be estimated as
	\begin{align*}
	&\int_{\mathbb{B}_1}(1+|v|^2)M[f]\,dv\cr
	&\quad =\int_0^{2\pi}\int_0^\pi\cdots\int_0^{\theta_1}\int_{r_g}^{r_1}\left(1+|u_g|^2+2v\cdot u_g+r^2\right)\left(\frac{2\gamma}{\gamma-1}\rho_f^{\gamma-1}-\left(r^2+2v\cdot (u_g-u_f)+|u_g-u_f|^2\right)\right)^{n/2}\cr
	&\hspace{6cm} \times r^{d-1}(\sin^{d-2}\theta) (\sin^{d-3}\varphi_{1})\cdots (\sin\varphi_{d-3})\,drd\theta d\varphi_1\cdots d\varphi_{d-2} \cr
	&\quad \le C\int_0^{2\pi}\int_0^\pi\cdots\int_0^{\theta_1}(r_1-r_g)\,d\theta d\varphi_1\cdots d\varphi_{d-2} \cr
	&\quad \le C(r_g-r_f+U_{f,g}).
	\end{align*}
	In the last line, we used the fact that  $r_1$ is decreasing in $\theta\in [0,\pi ]$. Thus it follows from \eqref{r rho} that
	$$
	\int_{\mathbb{B}_1}(1+|v|^2)M[f]\,dv \le C(|\rho_f-\rho_g|+|u_f-u_g|).
	$$
	For $\mathbb{B}_2$, we obtain
	\begin{align*}
	&\int_{\mathbb{B}_2}(1+|v|^2)M[f]\,dv\cr
	&\quad =\int_0^{2\pi}\int_0^\pi\cdots\int^{\theta_2}_{\theta_1}\int_{r_{21}}^{r_{22}}\left(1+|u_g|^2+2v\cdot u_g+r^2\right)\left(\frac{2\gamma}{\gamma-1}\rho_f^{\gamma-1}-\left(r^2+2v\cdot (u_g-u_f)+|u_g-u_f|^2\right)\right)^{n/2}\cr
	&\hspace{6cm}\times r^{d-1}(\sin^{d-2}\theta)( \sin^{d-3}\varphi_{1})\cdots (\sin\varphi_{d-3})\,drd\theta d\varphi_1\cdots d\varphi_{d-2} \cr
	&\quad \le C(\theta_2-\theta_1).
	\end{align*}
	By \eqref{arc}, we get
	\begin{align*}
	|\theta_2-\theta_1|&=\left|\arcsin\left(\frac{r_g^2+U_{f,g}^2-r_f^2}{2r_gU_{f,g}} \right) -\arcsin\left(\frac{\sqrt{U_{f,g}^2-r_f^2}}{U_{f,g}}\right) \right|\cr
	&\le C\left(\frac{r_g^2+U_{f,g}^2-r_f^2}{2r_gU_{f,g}}+\frac{\sqrt{U_{f,g}^2-r_f^2}}{U_{f,g}}\right)\cr
	&\le C U_{f,g},
	\end{align*}
	where we used \eqref{bounds2} and the condition of $\sfD_2$. This gives
	\begin{align*}
	\int_{\mathbb{B}_2}(1+|v|^2)M[f]\,dv\le C|u_f-u_g|.
	\end{align*}
	Since $\mathbb{B}_3$ can be handled in the same manner as $\mathbb{B}_1$, we omit it. Finally, we use \eqref{1d I23} to estimate the third term  as
	\begin{align*}
	&\int_{ \mathbb{B}_f \cap\mathbb{B}_g} (1+|v|^2)\left|M[f]-M[g]\right| dv\cr
	&\quad \le C\int_{\mathbb{B}_f\cap \mathbb{B}_g}\int_0^1\left|r_f-r_g\right|(1+|v|^2)  \left\{\theta\left(r_f^2-|v-u_f|^2 \right)+(1-\theta)\left(r_g^2-|v-u_g|^2\right) \right\}^{\frac n2-1} dvd\theta \cr
	&\qquad +C\int_{\mathbb{B}_f\cap \mathbb{B}_g}\int_0^1\left|u_f-u_g\right|\left(1+|v|^2\right)^2   \left\{\theta\left(r_f^2-|v-u_f|^2 \right)+(1-\theta)\left(r_g^2-|v-u_g|^2\right) \right\}^{\frac n2-1} dvd\theta.
	\end{align*}
	By \eqref{bounds2}, the integrands are bounded on $\mathbb{B}_f\cap\mathbb{B}_g$  and  $|\mathbb{B}_f\cap\mathbb{B}_g|$ is finite as well, so we get
	\begin{align}\label{meanvalue}\begin{split}
	\int_{ \mathbb{B}_f \cap\mathbb{B}_g} (1+|v|^2)\left|M[f]-M[g]\right|dv \le C\left|r_f-r_g\right|  +\left|u_f-u_g\right| \le C\left(|\rho_f-\rho_g|+|u_f-u_g|\right).
	\end{split}\end{align}

\vspace{.2cm}

	$\bullet$ Estimate of $\widetilde{III}$: Similarly to the case $\widetilde{II}$,  we have from \eqref{meanvalue} that
	\begin{align*}
	\widetilde{III}\le \int_{\mathbb{B}_f\setminus\mathbb{B}_g} (1+|v|^2)M[f]\,dv+\int_{\mathbb{B}_g\setminus\mathbb{B}_f} (1+|v|^2)M[g]\,dv  +C\left(|\rho_f-\rho_g|+|u_f-u_g|\right).
	\end{align*}
	When we translate $u_g$ to the origin, $\mathbb{B}_f\setminus\mathbb{B}_g$ is expressed in spherical coordinates as
	\begin{align*}
	\mathbb{B}_f\setminus\mathbb{B}_g&=\left\{ (r,\theta,\varphi_1,\dots,\varphi_{d-2}) : r_g\le r\le \tilde{r},\ 0\le \theta\le \tilde{\theta},\ 0\le \varphi_1,\dots,\varphi_{d-3}\le \pi,\  0\le \varphi_{d-2}< 2\pi \right\}
	\end{align*}
	with
	\begin{align*}
	\tilde{r}=&\sqrt{r_f^2-U_{f,g}^2+U_{f,g}^2\cos^2\theta}+U_{f,g}\cos\theta\quad \mbox{and} \quad \cos\tilde{\theta}=\frac{r_g^2+U_{f,g}^2-r_f^2}{2r_gU_{f,g}}.
	\end{align*}
	By the same argument as in $\mathbb{B}_1$ of $\widetilde{II}$, we can conclude that
	$$
	\widetilde{III}\le C\left(|\rho_f-\rho_g|+|u_f-u_g|\right).
	$$  
	
	\vspace{.2cm}
	
	$\bullet$ Estimate of $\widetilde{IV}$: To avoid the repetition of estimates, we only provide the result in the case that $\mathbb{B}_g$ is contained within  $\mathbb{B}_f$. Then we have
	\begin{align*}
	\widetilde{IV}&=\int_{ \mathbb{B}_g} (1+|v|^2)\left|M[f]-M[g]\right| dv+\int_{\mathbb{B}_f-\mathbb{B}_g} (1+|v|^2) M[f]\,dv.
	\end{align*} 
	In the same manner as \eqref{meanvalue}, the first term can be estimated as
	\begin{align*}
	\int_{\mathbb{B}_g} (1+|v|^2)\left|M[f]-M[g]\right| dv\le C\left(|\rho_f-\rho_g|+|u_f-u_g|\right).
	\end{align*}
	For the second term, we translate $u_g$ to the origin to express $\mathbb{B}_f\setminus\mathbb{B}_g$ in spherical coordinates as 
	$$
	\mathbb{B}_f\setminus\mathbb{B}_g=\left\{ (r,\theta,\varphi_1,\dots,\varphi_{d-2}) : r_g\le r\le \tilde{r},\ 0\le \theta\le \pi,\ 0\le \varphi_1,\dots,\varphi_{d-3}\le \pi,\  0\le \varphi_{d-2}< 2\pi \right\},
	$$
	where $\tilde{r} > 0$ is given by
	$$
	\tilde{r}:=-U_{f,g}\cos\theta +\sqrt{r_f^2-U_{f,g}^2+U_{f,g}^2\cos^2\theta}.
	$$
	Then, it follows from \eqref{bounds2} that
	\begin{align*}
	&\int_{\mathbb{B}_f-\mathbb{B}_g} (1+|v|^2) M[f]\,dv\cr
	&\quad =\int_0^{2\pi}\int_0^\pi\cdots\int_0^{\pi}\int_{r_g}^{\tilde{r}}\left(1+|u_g|^2+2v\cdot u_g+r^2\right)\left(\frac{2\gamma}{\gamma-1}\rho_f^{\gamma-1}-\left(r^2+2v\cdot (u_g-u_f)+|u_g-u_f|^2\right)\right)^{n/2}\cr
	&\hspace{6cm} \times r^{d-1}(\sin^{d-2}\theta)( \sin^{d-3}\varphi_{1})\cdots (\sin\varphi_{d-3})\,drd\theta d\varphi_1\cdots d\varphi_{d-2} \cr
	&\quad \le C\int_0^{2\pi}\int_0^\pi\cdots\int_0^{\pi}(\tilde{r}-r_g)\,d\theta d\varphi_1\cdots d\varphi_{d-2}\cr
	&\quad \le C\left(U_{f,g}+r_f-r_g \right).
	\end{align*}
	In the last line, we used the fact that $\tilde{r}$ is increasing in $\theta\in [0,\pi]$. This together with \eqref{r rho} yields
	$$
	\int_{\mathbb{B}_f\setminus\mathbb{B}_g} (1+|v|^2) M[f]\,dv \le C\left(|\rho_f-\rho_g|+|u_f-u_g|\right).
	$$
This completes the proof.




\section*{Acknowledgments}
Y.-P. Choi and  B.-H. Hwang  were supported by National Research Foundation of Korea(NRF) grant funded by the Korea government(MSIP) (No. 2022R1A2C1002820).

%
%
%
%

\end{document}